\newtheorem{theorem}{Theorem}[section]
\newtheorem{lemma}[theorem]{Lemma}
\newtheorem{corollary}[theorem]{Corollary}
\newtheorem{proposition}[theorem]{Proposition}
\theoremstyle{remark}
\newtheorem{remark}[theorem]{Remark}
\theoremstyle{definition}
\newtheorem{definition}[theorem]{Definition}
\newtheorem{example}[theorem]{Example}
\numberwithin{equation}{section}
\DeclareMathOperator{\bB}{{\mathbb B}}
\DeclareMathOperator{\bR}{{\mathbb R}}
\DeclareMathOperator{\bC}{{\mathbb C}}
\DeclareMathOperator{\bH}{{\mathbb H}}
\DeclareMathOperator{\bN}{{\mathbb N}}
\DeclareMathOperator{\Rdb}{{\mathbb R}}
\DeclareMathOperator{\maxten}{\otimes_{\rm max}}
\DeclareMathOperator{\minten}{\otimes_{\rm min}}
\DeclareMathOperator{\cR}{{\mathcal R}}
\DeclareMathOperator{\cS}{{\mathcal S}}
\DeclareMathOperator{\decR}{{{\rm Dec}_{\mathbb{R}}}}
\DeclareMathOperator{\decC}{{{\rm Dec}_{\mathbb{C}}}}
\DeclareMathOperator{\idec}{{\rm dec}}
\DeclareMathOperator{\cpR}{{{\rm CP}_{\mathbb{R}}}}
\DeclareMathOperator{\cpC}{{{\rm CP}_{\mathbb{C}}}}
\DeclareMathOperator{\cbR}{{{\rm CB}_{\mathbb{R}}}}
\DeclareMathOperator{\cbC}{{{\rm CB}_{\mathbb{C}}}}
\DeclareMathOperator{\icb}{{\rm cb}}
\DeclareMathOperator{\scp}{{\rm SCP}}
\title{Real decomposable maps on operator systems}
\author{David P. Blecher}
\address{Department of Mathematics, University of Houston, Houston, TX 77204-3008.}
\email{dpbleche@central.uh.edu}
\author{Christiaan H. Pretorius}
\address{Department of Mathematics, University of Houston, Houston, TX 77204-3008.}
\email{chpretor@central.uh.edu} 
\date{May 6, 2026, final version} 
\begin{document}

\subjclass[2020]{Primary  46L07,  47L05, 47L25; Secondary: 46M05, 47B92, 47L07}
\keywords{Real operator systems, completely positive maps, completely bounded maps, decomposable maps, operator algebras, real $C^*$-algebras, complexification, tensor products} 
\begin{abstract}   We initiate and study the theory of ``real decomposable maps" between real operator systems.  Formally, this is new even in the complex case, which hitherto has restricted itself to the case where the systems are complex $C^*$-algebras.   We investigate how our definition interacts with the existing theory (which it generalizes) and with the complexification.  In particular, a surprising term appears in the `Jordan decomposition' of real decomposable  maps.  This term constitutes  a new class of completely bounded maps, a class that also showed up  in disguised form in our recent study of real noncommutative (nc) convexity, and  whose theory is likely to have applications in that subject.  We also check the real case of many important known results related to decomposability, for example results about the weak expectation property or injectivity of von Neumann algebras. 
  \end{abstract}

\maketitle

\section{Introduction}  The decomposable maps may be viewed as a very useful class between the completely positive maps and the completely bounded maps.  They 
have some of the strong properties of completely positive maps but are much more general.  Indeed, 
analogously to the Jordan decomposition, the set of (complex) decomposable maps  are initially defined as the maps belonging to the complex span of the completely positive maps.  Haagerup \cite{Haag} and Pisier \cite{Pisbk,P} developed the theory of (complex) decomposable maps between $C^*$-algebras, however most of their results and their proofs clearly work more generally for maps between operator systems $V$ and $W$ (c.f.\ \cite{Han}). 
Thus $$\decC(V,W) = {\rm Span}_{\bC}(\cpC(V,W)) \subseteq {\rm CB}_{\bC}(V,W)$$ for complex operator systems, where these 
are respectively the decomposable,  completely positive (cp) maps, and  completely bounded maps.  We shall see however that this relation is not correct in the real case, in general.   Even in the complex case this relation hides the appropriate norm 
on $\decC(V,W)$.  From Haagerup's seminal work on decomposable maps \cite{Haag}, we have that $u: V\to W$ belongs to $\decC(V,W)$ if and only if there exist completely positive maps $S_1,S_2: V\to W$ such that
\begin{equation}\label{eq:OGCP}
x \mapsto\begin{bmatrix}
    S_1(x) & u(x)\\
    u(x^*)^* & S_2(x)
\end{bmatrix}
\end{equation}
defines a completely positive map from $V$ to $M_2(W).$ (Of course $V, W$ were $C^*$-algebras in \cite{Haag}.) Haagerup also defined the accompanying norm $\|\cdot\|_{\idec}$ on $\decC(V,W)$ by $\|u\|_{\idec} = \inf\{\max \{\|S_1\|,\|S_2\|\}\},$ where the infimum runs over all $S_1,S_2\in \cpC(V,W)$ such that the expression in \hyperref[eq:OGCP]{(\ref*{eq:OGCP})} yields a completely positive map. 
This norm is just the usual completely bounded norm and $\decC(V,W) = {\rm CB}(V,W)$ if $W$ is an injective $C^*$-algebra.
 The Banach space $(\decC(V,W),\|\cdot\|_{\idec})$ has proved to be, and will continue to be, a valuable tool in operator algebras. Decomposable maps  have been used to study the Connes-Kirchberg problems.
A representative example of a very recent paper using decomposable maps  to study completely bounded norms of k-positive maps in quantum information theory is  \cite{APet}, and we understand that some of these authors are continuing this direction
(see e.g.\ the last paragraphs in Section 1 in \cite{DPR}).  See \cite{P} for a modern survey of decomposable maps between complex $C^*$-algebras with applications. 

We shall see that it is precisely decomposability in the sense of \hyperref[eq:OGCP]{(\ref*{eq:OGCP})} that is the correct definition of a real decomposable map.  That is, $u$ is 
a real decomposable map between real operator systems, if there exist real completely positive maps $S_1,S_2,$ such that the prescription in \hyperref[eq:OGCP]{(\ref*{eq:OGCP})} defines a completely positive map. We will denote the space of such maps between real operator systems $V$ and $W,$ together with Haagerup's norm above, by $\decR(V,W).$

Here we investigate the theory of  ``real decomposable maps", generalizing (and relating it to) the existing complex theory, mostly via the complexification.  An important point is that the real case is a strict generalization of the complex case.  This is a subtle point: it is obvious that complex operator systems are real complex operator systems, however it is true, but not at all obvious, that a complex map between complex operator systems is real decomposable if and only if it is complex decomposable.  
Our paper is a sequel to \cite{BR}, which is a systematic development of the theory of real operator systems, but which skipped over topics connected to decomposability.  In turn \cite{BR} was a sequel to \cite{BReal}, which systematically develops real operator space theory. As was the case in these papers, many results will follow from the functoriality of complexification.  That is,  loosely speaking, that $F(X_c)\cong F(X)_c$ for a real object $X$ and various constructions $F.$  Here $X_c$ denotes the complexification of $X$.  Of course many 
results follow by the same proof in the complex case.
Nonetheless we consistently provide  `complexification proofs' where these are available.  This is partly because these are usually much shorter, but also 
because sometimes subtle errors occur in converting a complex proof to the real case, such as mistakenly (forgetfully) assuming that a certain real map is complex linear.

In \hyperref[thm:RealIsComplex_OS]{Theorem \ref*{thm:RealIsComplex_OS}} in \hyperref[sec:RealDecMaps]{Section \ref*{sec:RealDecMaps}}, we prove that the complexification of $\decR(V,W)$ is the set of 
(complex) decomposable maps $\decC(V_c,W_c)$.
The span of the completely positive maps equals the 
set of selfadjoint decomposable maps $\decR(V,W)_{\rm sa}$  (that is, decomposable maps for which $u(x) = u(x^*)^*$).  We have a somewhat surprising `Jordan decomposition'
$$\decR(V,W) = 
\cpR(V, W) - \cpR(V, W)  + \decR(V,W)_{\rm as},$$ where $\decR(V,W)_{\rm as}$ denotes the set of {\em antisymmetric} or {\em skew} decomposable maps, namely those for which $u(x^*)^* = -u(x)$.  This was 
surprising to us in two ways.
First, one might have expected  
in the Jordan decomposition in the real case just $\cpR(V, W) - \cpR(V, W)$, as in the classical Jordan decomposition.  Second,  the 
extra antisymmetric term turns out to be precisely an important class in the first author's  recent real noncommutative (nc) convexity work \cite{BMcI} inspired by \cite{DK}, and a special case of it shows up in the complexification of the nc state space.  In this theory 
the complexification of a real nc convex set $K$ (and in particular of the real nc state space) is a set of elements $x + iy$ for certain 
$x$ and $y$, the `real' and `imaginary' parts.
The `real part' $x$ is in $K$ and is easy to understand and work with, but $y$ has usually been 
more difficult to comprehend or to get 
one's hands on explicitly.  However such elements $y$ turn out to essentially  correspond to the antisymmetric term in the Jordan decomposition above, as we shall see in 
Theorem \ref{coco}.  
 This suggests that it should be useful to view and treat the imaginary parts of complex nc convex sets with tools from the theory of decomposable maps. Such antisymmetric terms give important new examples of decomposable maps in the real case.  They are also the `imaginary part'  of a complex completely positive map $\varphi$ between complexifications of operator systems.  See Example \ref{exrd} and Section \ref*{sec:DecAs}.
Thus these maps are quite ubiquitous, since it turns out that it is extremely difficult to find explicit complex operator systems that are not complexifications.   (For $C^*$-algebras this is equivalent to the hard problem of the existence of a real form. There are not even any known simple low dimensional examples of complex Banach spaces that are not (isometric) complexifications.) In  \hyperref[sec:deltaNorm]{Section \ref*{sec:DecAs}} we characterize $\decR(V,W)_{\rm as}$ in various ways, and  give a Stinespring-like theorem for such maps.

In \hyperref[sec:Apps]{Section \ref*{sec:Apps}} the real case of various tensor-related applications of real decomposable maps from \cite{P, Haag} are discussed; such as the real version of the decomposable characterizations of WEP, QWEP, and of injective von Neumann algebras.  We also check the real case of some other related results from \cite{P}.  We do not attempt to cover every result,  however we cover enough so that the reader familiar with the complex theory from the literature will obtain  a fairly complete grasp of how the real case works out.   We remark that the first author has some earlier work with Christian Le Merdy on a notion of  decomposable maps on nonselfadjoint operator algebras.  

We will expect our readers to be a little familiar with 
the theory of real operator systems and operator spaces (see e.g.\ \cite{BR,BReal}).  Indeed, some of our proofs here are more condensed than similar proofs in e.g.\ \cite{BR,BMcI} since more familiarity with some of the tricks and techniques there is assumed. 
        For general background on complex operator systems and spaces, and in particular on the definitions etc.\ in the rest of this section, we refer the reader to e.g.\ \cite{Pnbook,BLM,Pisbk}.  
It might 
be helpful to also browse some of the other  existing real operator space theory  e.g.\ \cite{ROnr,RComp,
BT,BCK}.  Some basic 
real $C^*$- and von Neumann algebra theory may be found in \cite{Li}.  

We write $M_n(\bR)$ for the real $n \times n$ matrices, or sometimes simply $M_n$ when the context is clear.  Similarly in the complex case.
If $n$ is an infinite cardinal, we 
interpret $M_n(X)$ in the usual 
operator space sense (see 1.2.26 in \cite{BLM} for the complex case).  In particular, 
$M_n(\bR) \cong B(l^2_n(\bR))$. 
If $X = Y^*$ is a dual 
operator space, then $M_n(X)$ may be identified with ${\rm CB}(Y,M_n)$ (see \cite[Section 1.6]{BLM}
for the complex case).  The above hold for any cardinal $n$. 
 The letters $H, K$ are usually reserved for real or complex Hilbert spaces.  Every complex Hilbert space $H$ is a real Hilbert space, i.e.\ we forget the complex structure.  
 We write $X_{\rm sa}$ for the selfadjoint elements  
 in a $*$-vector space $X$.  In the complex case 
 $M_n(X)_{\rm sa} \cong (M_n)_{\rm sa} \otimes X_{\rm sa}$, but this  fails for real spaces. A subspace of $B(H)$ is {\em unital} if it contains the identity,   
and a map $T$ is unital if $T(1) = 1$.  Our identities $1$ always have norm $1$.

Clearly a real theory of  decomposable maps must involve real operator systems.
 A {\em concrete complex (resp.\ real) operator system} $V$ is a unital selfadjoint subspace of $B(H)$ for $H$ a complex (resp.\ real) Hilbert space. For $n \in \bN$ we have the identification $M_n(B(H)) \cong B(H^{(n)})$ where $H^{(n)}$ is the $n$-fold direct sum of $H$. From this identification, $M_n(V)$ inherits a norm and positive cone $\mathfrak{C}_n$. The latter is the set $M_n(V)^+ = \{x \in M_n(V): x = x^* \geq 0 $ in $ B(H^{(n)})\}$.  
 
 For  $n \in \bN$ we define the amplification of a linear map $\varphi: V \to W$ by 
        \[\varphi^{(n)}: M_n(V) \to M_n(W)\]
        \[[x_{ij}] \mapsto [\varphi(x_{ij})] . \]
        The natural morphisms between operator systems are {\em  unital completely positive} (ucp) maps, which are linear maps $\varphi: V \to W$ that are unital and every amplification is positive (or equivalently selfadjoint and contractive).  The norm of a cp map $\varphi$ agrees with its completely
        bounded (cb) norm $\| \varphi \|_{\rm cb} = \sup_n \| \varphi^{(n)} \|$, and also equals
        $\| \varphi(1) \|$.
        The isomorphisms (resp.\ embeddings) of operator systems which are used in this paper are bijective (resp.\ injective) ucp maps whose inverse (resp.\ inverse in its range) is ucp.  These are called unital complete order isomorphisms (resp.\ unital complete order embeddings); or {\rm ucoi} (resp.\ ucoe) for short. They are necessarily completely isometric. 
        An abstract real operator system is a 
        real $*$-vector space $V$ with a fixed `unit element' $1$, and cones $\mathfrak{C}_n \subset M_n(V)$, 
        which is ucoi to a concrete operator system.
        See also e.g.\  \cite[Section 2]{BR}. 

        Similarly a concrete operator space $E$ is a subspace of $B(H)$ with norms on $M_n(E)$ inherited from $B(H^{(n)})$. There are abstract characterizations of real/complex operator spaces and operator systems 
        which we will not repeat here.
 If $V$ is a real operator system then $V$ can naturally be made into a complex operator system by complexification. The complexification $V_c$ of $V$ is algebraically just the vector space complexification of $V$, consisting of elements $x+iy$ for $x,y \in V$. We give this a conjugate linear involution $(x+iy)^* = x^* - i y^*$. The matrix ordering $M_n(V_c)^+$ will be defined by
 \begin{equation}\label{mo}       M_n(V_c)^+ = \{x+iy \in M_n(V_c): c(x,y) \geq 0\} , \end{equation}
        where $c(x,y)$ is the matrix in $M_{2n}(V)$ defined by 
        \begin{equation}\label{cis} c(x,y) = \begin{bmatrix}
            x & -y\\ y & x
        \end{bmatrix} . \end{equation}
   Then $V_c$ is an abstract operator system, called the operator system  complexification  of $V$.
   We recall that this complexification  is the unique one satisfying Ruan's completely {\em reasonable} condition, namely that the map $\theta_V(x+iy)=x-iy$ is a ucoi. 
That is, as described in and after (2.1) in \cite{BCK}, we may identify
$V_c$ unitally real completely order isomorphically (and hence completely isometrically) with the real operator system 
$$\cR_V = \{ c(x,y) \in M_2(V) : x ,y \in V \}.$$
There is a natural `multiplication by i' on the latter, with respect to which 
$c : V_c \to \cR_V$ becomes a complex complete order isomorphism.  Thus often in our paper we simply work 
with $\cR_V$ in place of $V_c$ without comment.  Similarly for a 
real linear $u : V \to W$, the complexification 
$u_c$ may be identified with $(u_2)_{|\cR_V}$.  We will assume that the reader is familiar with basic properties of $u_c$ from e.g.\ early parts of \cite{BR}.
We recall that $\theta_V := x+iy\mapsto x -iy,\ x,y\in V$ is a period 2 real complete order automorphism of $V_c$.   Conversely, the set $V$ of fixed points of any period 2 real complete order automorphism of a complex operator system $W$,  is a real operator system with $V_c = W$. 

We will use the matrix notation $c(x,y)$ very often in our paper.  Sometimes we also write $c(x+iy)$ for $c(x,y)$.  Note that for a real $C^*$-algebra $A$, and in particular for $A = M_m(\bR)$ the map $c : A_c \to M_2(A)$ is a  faithful $*$-homomorphism. 

        In parts of \cite{BR,BMcI} it was checked that many of the basic theorems and constructions for complex operator systems also hold for real operator systems. Very many foundational
structural results for real operator systems were developed, and it was shown  how the complexification interacts
with the basic constructions in the subject.

\section{Real decomposable maps}\label{sec:RealDecMaps}

\begin{definition}\label{def:DecR}
Let $V$ and $W$ be real operator systems. We will say a linear map $u: V\to W$ is {\em real decomposable}, or just {\em decomposable}, if there exists $S_1,S_2\in \cpR(V, W)$ (the set of real cp maps between $V$ and $W$) such that
\begin{equation}\label{decd}
\hat{u} = \begin{bmatrix}
S_1 & u\\
u^* & S_2
\end{bmatrix} \in \cpR(V,M_2(W)),
\end{equation} where $u^*(x) := u(x^*)^*.$ Let us write $\mathfrak{P}(u)$ for the set of all pairs $(S_1,S_2)$ that satisfy the prescription above. These are the {\em witnesses of decomposability}. We will write $\decR(V,W)$ for the set of all maps that are real decomposable. We also define $\|\cdot\|_{\idec}: \decR(V,W) \to \Rdb,$ by $$\|u\|_{\idec}= \inf\{\max\{\|S_1\|, \|S_2\|\} : (S_1,S_2) \in \mathfrak{P}(u)\}.$$
\end{definition}

One may view $\hat{u}$ as somewhat analogous to the variation (measure) of a (nonpositive) measure.

\begin{proposition}\label{ss}  In the above definition, in both the real and the complex case, one may choose $S_1, S_2$  to be ucp if $\|u\|_{\idec} < 1$.  For such a choice,  $\hat{u}$ is ucp if and only if $u(1) = 0$.
\end{proposition}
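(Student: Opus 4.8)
The plan is to reduce both assertions to elementary manipulations with completely positive maps, carried out identically in the real and complex cases; the only external inputs are that states exist on (real or complex) operator systems, that a positive functional on an operator system is automatically completely positive, and that for a cp map $\varphi$ one has $\|\varphi\|=\|\varphi(1)\|$ with $\varphi$ ucp exactly when it is unital.

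For the first assertion I would start from a witness $(S_1,S_2)\in\mathfrak{P}(u)$ with $\max\{\|S_1\|,\|S_2\|\}<1$, which exists since $\|u\|_{\idec}<1$. Because each $S_i$ is cp we have $\|S_i\|=\|S_i(1)\|<1$, so $S_i(1)\le\|S_i(1)\|\,1\le 1$ and hence $a_i:=1-S_i(1)\in W^+$. Fixing a state $\omega$ on $V$, I would set $\varphi_i(x)=\omega(x)\,a_i$ and $S_i':=S_i+\varphi_i$. Then $S_i'$ is cp and unital (since $S_i'(1)=S_i(1)+a_i=1$), and $\hat{u}':=\begin{bmatrix}S_1'&u\\ u^*&S_2'\end{bmatrix}=\hat{u}+\begin{bmatrix}\varphi_1&0\\ 0&\varphi_2\end{bmatrix}$ is a sum of two cp maps, the second because $x\mapsto\mathrm{diag}(\varphi_1(x),0)$ and $x\mapsto\mathrm{diag}(0,\varphi_2(x))$ are $\varphi_i$ followed by a corner embedding $W\hookrightarrow M_2(W)$. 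Since $\hat{u}$ is cp, so is $\hat{u}'$, whence $(S_1',S_2')\in\mathfrak{P}(u)$ is a witness with both maps ucp.

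For the second assertion I would simply compute $\hat{u}(1)$ for a ucp witness $(S_1,S_2)$: using $S_i(1)=1$ and $u^*(1)=u(1^*)^*=u(1)^*$ gives $\hat{u}(1)=\begin{bmatrix}1&u(1)\\ u(1)^*&1\end{bmatrix}$, which equals the unit $\begin{bmatrix}1&0\\ 0&1\end{bmatrix}$ of $M_2(W)$ if and only if $u(1)=0$. Since $\hat{u}$ is already cp (being a witness) and a cp map between operator systems is ucp precisely when it is unital, this yields $\hat{u}$ ucp $\iff u(1)=0$.

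The one point needing genuine checking — the ``main obstacle'', though a minor one — is the complete positivity of the correction maps $\varphi_i$, i.e.\ that $x\mapsto\omega(x)\,a$ is cp for a state $\omega$ on $V$ and $a\in W^+$. I would justify this by the usual rank-one argument: for $[x_{kl}]\in M_n(V)^+$ the matrix $[\omega(x_{kl})]$ lies in $M_n^+$ because $\omega$ is automatically completely positive, and writing $[\omega(x_{kl})]=\sum_m v_m v_m^{\mathrm{t}}$ with scalar column vectors $v_m$ shows $[\omega(x_{kl})\,a]=\sum_m v_m v_m^{\mathrm{t}}\,a\in M_n(W)^+$. No appeal to the complexification is required, and the argument is verbatim the same in the real case.
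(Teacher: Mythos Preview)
Your proposal is correct and follows essentially the same route as the paper's proof: the paper also picks a state $\psi$ on $V$, sets $R_k=\psi(\cdot)(1-S_k(1))$, and adds the diagonal correction $R_1\oplus R_2$ to $\hat{u}$ to obtain a witness with ucp diagonal entries, while dismissing the second assertion as ``obvious''. Your write-up supplies more detail than the paper does (in particular the verification that $x\mapsto\omega(x)\,a$ is cp, and the explicit computation of $\hat{u}(1)$), which is fine; the only cosmetic point is that in the complex case your rank-one decomposition should read $v_m v_m^*$ rather than $v_m v_m^{\mathrm{t}}$.
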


\begin{proof} Choose $(S_1,S_2) \in \mathfrak{P}(u)$ 
with $\| S_k \| = \| b_k \| \leq 1$, where $b_k = S_k(1)$, and let $\hat{u}$ be the associated cp map from (\ref{decd}). Let $\psi$ be any state of 
$V$, and let $R_k = \psi(\cdot) \, (I-b_k) \in {\rm CP}(V,W)$. Define $\varphi = \hat{u} +  (R_1 \oplus R_2)$.  This is cp, so that $(S_1 + R_1, S_2+R_2) \in \mathfrak{P}(u)$.   Replacing $S_k$ by $S_k+R_k$ and $\hat{u}$ by $\varphi$ establishes the first result. The second is obvious. 
\end{proof} 

{\bf Remark.} We shall see later (in Remark \ref{att}) that the proposition is also valid if $\|u\|_{\idec} = 1$ under further reasonable conditions on $W$.  We do not know whether these conditions are necessary.

\bigskip 

Unless there is cause for confusion, we will use 
$\|\cdot\|_{\idec}$ for the decomposable norm in both the real and complex case;  the reader should easily be able to make sense of what is being communicated.

\begin{theorem}\label{res:RealInComplex}
Let $V,W$ be real operator systems. Then $u\in \decR(V,W)$ if and only if $ u_c\in \decC(V_c,W_c).$ Moreover $\|u\|_{\idec} = \|u_c\|_{\idec}.$
\end{theorem}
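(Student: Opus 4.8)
The plan is to pass everything through the identification of $V_c$ with the concrete real operator system $\cR_V \subseteq M_2(V)$, so that $u_c$ becomes $(u_2)|_{\cR_V}$, and then to translate the defining $2\times 2$ block condition (\ref{decd}) back and forth between the real map $u$ and the complex map $u_c$. The key observation to exploit is the ``functoriality of complexification'' for completely positive maps, already recalled in the excerpt: a real linear map $S: V \to W$ is real cp if and only if $S_c : V_c \to W_c$ is (complex) cp, and this respects norms since $\|S\| = \|S(1)\| = \|S_c(1)\| = \|S_c\|$ for cp maps. The auxiliary computations I will need are that complexification commutes with forming the matrix map $\hat{u}$ and with the $M_2$ construction on the target, i.e.\ that $(M_2(W))_c \cong M_2(W_c)$ completely order isomorphically and that under this identification $(\hat u)_c$ is carried to $\widehat{u_c}$, where the latter is the $2\times 2$ block of complex cp maps $\begin{bmatrix} (S_1)_c & u_c \\ (u_c)^* & (S_2)_c\end{bmatrix}$. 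The compatibility $(u_c)^* = (u^*)_c$ is immediate from the formula $\theta$ for the involution on the complexification, since $u^*(x) = u(x^*)^*$ intertwines appropriately with conjugation.

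\medskip

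First I would prove the forward direction: given witnesses $(S_1, S_2) \in \mathfrak{P}(u)$, so that $\hat u \in \cpR(V, M_2(W))$, apply complexification to obtain $(\hat u)_c \in \cpC(V_c, (M_2(W))_c)$; then identify $(M_2(W))_c$ with $M_2(W_c)$ and check that $(\hat u)_c$ becomes $\widehat{u_c}$ with witnesses $((S_1)_c, (S_2)_c)$, each of which is complex cp. Hence $u_c \in \decC(V_c, W_c)$ and $\|u_c\|_{\idec} \le \max\{\|(S_1)_c\|, \|(S_2)_c\|\} = \max\{\|S_1\|, \|S_2\|\}$; taking the infimum over $\mathfrak{P}(u)$ gives $\|u_c\|_{\idec} \le \|u\|_{\idec}$.

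\medskip

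For the converse I would start with complex witnesses $(T_1, T_2) \in \mathfrak{P}(u_c)$, i.e.\ $\begin{bmatrix} T_1 & u_c \\ (u_c)^* & T_2 \end{bmatrix} \in \cpC(V_c, M_2(W_c))$, with $\max\{\|T_1\|, \|T_2\|\}$ close to $\|u_c\|_{\idec}$. The issue is that $T_1, T_2$ need not be complexifications of real maps; the standard fix is to symmetrize using the period-$2$ automorphisms $\theta_V$ on $V_c$ and $\theta_W$ on $W_c$ (equivalently $\theta_{M_2(W)}$ on the target). Since $u_c$ and $(u_c)^*$ are $\theta$-equivariant (because $u$ is real), averaging the witness block with its $\theta$-conjugate $\theta_W^{-1} \circ (\cdot) \circ \theta_V$ leaves the off-diagonal entries unchanged while replacing $T_k$ by $\theta$-equivariant complex cp maps $T_k'$ of no larger norm; a $\theta$-equivariant complex cp map $V_c \to W_c$ restricts to a real cp map $S_k : V \to W$ with $(S_k)_c = T_k'$ and $\|S_k\| = \|T_k'\| \le \|T_k\|$. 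Restricting the averaged block to $V$ (inside $V_c$) then exhibits $\hat u \in \cpR(V, M_2(W))$ with witnesses $(S_1, S_2)$, giving $u \in \decR(V, W)$ and $\|u\|_{\idec} \le \max\{\|S_1\|,\|S_2\|\} \le \max\{\|T_1\|,\|T_2\|\}$; an infimum then yields $\|u\|_{\idec} \le \|u_c\|_{\idec}$, completing the norm equality.

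\medskip

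The main obstacle I anticipate is the bookkeeping in the converse direction: verifying cleanly that the averaging operation $T \mapsto \tfrac12(T + \theta_W^{-1} T \theta_V)$ preserves complete positivity and does not increase the relevant norm, and that a $\theta$-equivariant complex cp map on $V_c$ genuinely ``comes from'' a real cp map on $V$ with matching norm — this is exactly the kind of place where, as the authors warn in the introduction, one can slip by forgetting that a map is only real-linear. All of this should follow from the basic properties of $u_c$ and of $\theta$ recorded in \cite{BR} and recalled above, but it needs to be spelled out carefully, in particular the identification $(M_2(W))_c \cong M_2(W_c)$ as complex operator systems compatibly with the $\theta$'s.
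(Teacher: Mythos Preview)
Your proposal is correct, and the forward direction matches the paper's argument essentially verbatim. For the converse the paper takes a more direct route than your $\theta$-averaging: rather than symmetrizing the complex witnesses to force $\theta$-equivariance, it simply composes with the ucp ``real part projection'' $\rho : W_c \to W$, $x+iy \mapsto x$, setting $S_i = \rho \circ (R_i)|_V$ and observing that $\hat u = \rho^{(2)} \circ \hat v|_V$ is real cp as a composition of cp maps, with $\|S_i\| \le \|R_i\|$. This produces exactly the same real witnesses as your averaging (for $x \in V$ one has $\theta_V(x)=x$, so $\tfrac12(T_k + \theta_W T_k \theta_V)(x) = \tfrac12(T_k(x) + \theta_W(T_k(x))) = \rho(T_k(x))$), but it sidesteps precisely the bookkeeping you flag as the main obstacle: there is no need to check separately that $\theta$-conjugation preserves complete positivity, that the average does not increase the norm, or that a $\theta$-equivariant complex cp map descends to a real one. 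The single ucp map $\rho$ packages all of that.
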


\begin{proof}
Let $u\in \decR(V,W)$.  Note that  $(u^*)_c = (u_c)^*.$ (See \hyperref[def:DecR]{Definition \ref*{def:DecR}} for the definition of $u^*.$) Indeed, let $x,y\in V,$ then 
$$\begin{aligned}
\begin{bmatrix}
    u^*(x) & u^*(-y)\\
    u^*(y) & u^*(x)
\end{bmatrix} =
\begin{bmatrix}
    u(x^*) & u(y^*)\\
    u(-y^*) & u(x^*)
\end{bmatrix}^*
= \left(u_c\left(\begin{bmatrix}
    x^*& y^*\\
    -y^* & x^*
\end{bmatrix}\right)\right)^*
\end{aligned}.$$
This shows that $(u^*)_c = (u_c)^*$, since under the 
identifications for the complexification  discussed towards the end of the introduction,  $$(u^*)_c\left(\begin{bmatrix}
    x& -y\\
    y& x
\end{bmatrix}\right) = (u_c)^*\left(\begin{bmatrix}
    x& -y\\
    y & x
\end{bmatrix}\right).$$

Suppose that $u\in \decR(V,W),$ and $(S_1,S_2) \in \mathfrak{P}(u).$ We let
$$\hat{u} := \begin{bmatrix}
    S_1 & u\\
    u^*& S_2
\end{bmatrix} \quad \text{and} \quad \vec{z} = c(x,y) = \begin{bmatrix}
    x & -y\\ y & x
\end{bmatrix}, \; \; \text{and}$$
$$
\begin{bmatrix}
    S_1(x) & u(x) & S_1(-y) & u(-y)\\
    u^*(x) & S_2(x) & u^*(-y) & S_2(-y)\\
    S_1(y) & u(y) & S_1(x) & u(x)\\
    u^*(y) & S_2(y) & u^*(x) & S_2(x)
\end{bmatrix} \sim
\begin{bmatrix}
    S_1(x) & S_1(-y) & u(x) & u(-y)\\
    S_1(y) & S_1(x) & u(y) & u(x)\\
    u^*(x) & u^*(-y) & S_2(x) & S_2(-y)\\
    u^*(y) & u^*(x) & S_2(y) & S_2(x)
\end{bmatrix}
$$
where $\sim$ indicates  a ``canonical shuffle," which does not affect a map being cp. Note that the left matrix may be identified with $(\hat{u})_c(\vec{z})$ and the right with $[T_{ij}]$, where $$T_{11} = (S_1)_c(\vec{z}), 
\; T_{12} = u_c(\vec{z}), \; T_{21} = (u^*)_c(\vec{z}), \; T_{22} = (S_2)_c(\vec{z}).$$ Recall that 
$$(\hat{u})_c\left(\vec{z}\right) 
= \begin{bmatrix}
    (S_1)_c\left(\vec{z}\right)
&
u_c\left(\vec{z}\right)\\
(u_c)^*\left(\vec{z}\right)
&
(S_2)_c\left(\vec{z}\right)
\end{bmatrix}
= \widehat{u_c}\left(\vec{z}\right).$$
By compression with $\begin{bmatrix}
I_{M_2}\\ \mathbf{0}
\end{bmatrix}$, with $\mathbf{0}$ the zero matrix, we find that both $(S_1)_c$ and $(S_2)_c$ are positive. We have shown that if $u\in \decR(V,W),$ then $u_c\in \decC(V_c,W_c).$ Furthermore, we have 
$$\begin{aligned}
    \|u_c\|_{\idec} = \inf\{\max\{\|R_1\|,\|R_2\|\}: (R_1,R_2)\in \mathfrak{P}(u_c)\}\leq \max\{\|(S_1)_c\|,\|(S_2)_c\|\},
\end{aligned}$$
since $((S_1)_c,(S_2)_c)\in \mathfrak{P}(u_c).$ Furthermore $\|(S_i)_c\|= \|S_i\| $ for $i=1,2,$ hence $ \|u_c\|_{\rm dec} \leq \|u\|_{\rm dec}.$ 

For the converse direction, suppose that $v = u_c \in \decC(V_c,W_c)$ 
and let $(R_1,R_2)\in \mathfrak{P}(u_c).$ 
Let $\rho : W_c \to W$ be the `real part projection' 
$\rho(x+iy) = x$, which is ucp \cite{BR}. Let $S_i = \rho \circ (R_i)_{|V} \in \cpR(V,W)$.
We let
$$\hat{u} := \begin{bmatrix}
    S_1 & u\\
    u^*& S_2
\end{bmatrix} \quad \text{and} \quad \hat{v} := \begin{bmatrix}
    R_1 & v\\
    v^*& R_2
\end{bmatrix}  .$$ 
Then $\hat{v}$ is cp and hence so is $\rho^{(2)} \circ 
\hat{v}_{| V} = \hat{u}$.  Thus 
$(S_1,S_2)\in \mathfrak{P}(u)$. This shows that $u\in \decR(V,W),$ and 
$$\|u\|_{\rm dec} \leq \max \{ \| S_1 \| , \| S_2 \| \} \leq 
\max \{ \| R_1 \| \| R_2 \| \}.$$ 
Thus   $\|u_c\|_{\rm dec} \geq \|u\|_{\rm dec}.$ 
\end{proof}

Let $V,W$ be real operator systems. We will frequently make use of the fact that the space $(\cbR(V,W),\|\cdot\|_{\icb})$ embeds into $(\cbC(V_c,W_c),\|\cdot\|_{\icb})$ isometrically via $T\mapsto T_c.$ For more information on this result and these maps, we refer the reader to \cite[Proposition 1.1]{BReal}.

\begin{corollary}\label{prop:cb<dec}
 We have $\decR(V, W) \subseteq \cbR(V, W)$; furthermore, for all $u \in \decR(V, W)$ we have $\|u\|_{\icb} \leq\|u\|_{\rm dec}.$
\end{corollary}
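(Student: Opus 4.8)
The plan is to reduce the real statement to the already-established complex case via complexification. Given $u \in \decR(V,W)$, Theorem \ref{res:RealInComplex} tells us that $u_c \in \decC(V_c,W_c)$ with $\|u\|_{\idec} = \|u_c\|_{\idec}$. In the complex setting the inclusion $\decC \subseteq \cbC$ together with the norm inequality $\|\cdot\|_{\icb} \le \|\cdot\|_{\idec}$ is part of the Haagerup--Pisier theory recalled in the introduction (and follows readily from the $2\times 2$ matrix characterization \hyperref[eq:OGCP]{(\ref*{eq:OGCP})}: the off-diagonal corner of a positive matrix is cb-bounded by the geometric, hence arithmetic, mean of the diagonal corners). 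So $u_c \in \cbC(V_c,W_c)$ and $\|u_c\|_{\icb} \le \|u_c\|_{\idec}$.

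Next I would transfer this back down to the real level using the isometric embedding $(\cbR(V,W),\|\cdot\|_{\icb}) \hookrightarrow (\cbC(V_c,W_c),\|\cdot\|_{\icb})$ via $T \mapsto T_c$, which is exactly the fact quoted just before the corollary from \cite[Proposition 1.1]{BReal}. Since $u_c$ lies in $\cbC(V_c,W_c)$, and $u_c$ is the image of the real linear map $u$ under this embedding, it follows that $u \in \cbR(V,W)$ and $\|u\|_{\icb} = \|u_c\|_{\icb}$. Chaining the (in)equalities:
\[
\|u\|_{\icb} = \|u_c\|_{\icb} \le \|u_c\|_{\idec} = \|u\|_{\idec},
\]
which is precisely the asserted inequality, and the membership $\decR(V,W) \subseteq \cbR(V,W)$ drops out along the way.

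There is really only one place that needs care, and it is the same subtlety the authors flag repeatedly: one must be sure that $u_c$ genuinely belongs to $\cbC$ \emph{and} that it is the complexification of the real map $u$ under the stated embedding, so that ``$u_c \in \cbC$'' actually implies ``$u \in \cbR$'' rather than merely bounding some unrelated complex extension. This is automatic here because $u_c$ is by construction the canonical complexification $(u_2)_{|\cR_V}$ described in the introduction, and the embedding $T \mapsto T_c$ in \cite[Proposition 1.1]{BReal} is exactly this assignment; so the real cb-norm of $u$ is literally read off as the complex cb-norm of $u_c$. Hence no genuine obstacle remains — the whole corollary is a two-line consequence of Theorem \ref{res:RealInComplex}, the complex Haagerup--Pisier inequality, and the isometric embedding of real into complex cb maps. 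If one preferred a self-contained real argument avoiding the complex machinery, one could instead apply the canonical shuffle to $\hat u^{(n)}$ to realize $u^{(n)}$ as an off-diagonal corner of a positive matrix in $M_{2n}(W)$ and estimate its norm directly by $\max\{\|S_1\|,\|S_2\|\}$ using the standard fact that $\left\|\begin{bmatrix} a & b \\ b^* & d\end{bmatrix}\right\| \ge \|b\|$ for a positive operator matrix; but the complexification proof is shorter and is the one I would present.
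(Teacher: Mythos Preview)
Your proof is correct and is essentially the same as the paper's: the paper's proof is the single chain $\|u\|_{\rm cb} = \|u_c\|_{\rm cb} \leq \|u_c\|_{\idec} = \|u\|_{\idec}$, invoking Theorem~\ref{res:RealInComplex} and the complex case \cite[Proposition~6.6~(i)]{P}, which is exactly what you wrote out (with more explanation of the subtleties).
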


\begin{proof} Via complexification and \cite[Proposition 6.6 (i)]{P} we have $\|u\|_{\rm cb} = \|u_c\|_{\rm cb} \leq \|u_c\|_{\idec} = \|u\|_{\idec}.$ 
\end{proof}

\begin{corollary}
Let $V,W$ be real operator systems. Then $\decR(V,W)$ is a Banach space, 
identifiable with a closed subspace of
$\decC(V_c,W_c).$
\end{corollary}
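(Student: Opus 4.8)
The plan is to leverage Theorem~\ref{res:RealInComplex} to reduce everything to known facts about the complex decomposable space. First I would recall that $(\decC(V_c,W_c),\|\cdot\|_{\idec})$ is a Banach space; this is standard from Haagerup's and Pisier's work (see \cite{Haag,P}), and it holds for complex operator systems by the same arguments (c.f.\ \cite{Han}). Theorem~\ref{res:RealInComplex} gives us an isometric injection
\[
\Phi : (\decR(V,W),\|\cdot\|_{\idec}) \longrightarrow (\decC(V_c,W_c),\|\cdot\|_{\idec}), \qquad u \mapsto u_c,
\]
since $u \in \decR(V,W)$ iff $u_c \in \decC(V_c,W_c)$ and $\|u\|_{\idec} = \|u_c\|_{\idec}$. (Linearity of $u \mapsto u_c$ and injectivity are elementary, the latter because $u_c$ restricts to $u$ on $V$.) So it remains only to check that $\Phi$ has closed range, equivalently that $\decR(V,W)$ is complete in $\|\cdot\|_{\idec}$.

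For completeness, I would take a $\|\cdot\|_{\idec}$-Cauchy sequence $(u_n)$ in $\decR(V,W)$. By Corollary~\ref{prop:cb<dec} it is also $\|\cdot\|_{\icb}$-Cauchy, hence (since $\cbR(V,W)$ is a Banach space, using that it embeds isometrically into $\cbC(V_c,W_c)$ as noted after Theorem~\ref{res:RealInComplex}, or directly) converges in cb-norm to some $u \in \cbR(V,W)$. Then $((u_n)_c)$ is $\|\cdot\|_{\idec}$-Cauchy in $\decC(V_c,W_c)$, so it converges there to some $v$; but $(u_n)_c \to u_c$ in cb-norm as well, and cb-norm convergence forces pointwise (indeed $\|\cdot\|$-) convergence, so $v = u_c$. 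Hence $u_c \in \decC(V_c,W_c)$, and by Theorem~\ref{res:RealInComplex} again $u \in \decR(V,W)$ with $\|u_n - u\|_{\idec} = \|(u_n)_c - u_c\|_{\idec} \to 0$. This shows $\decR(V,W)$ is complete, and being complete and isometrically embedded, its image in $\decC(V_c,W_c)$ is a closed subspace.

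Alternatively, and perhaps more cleanly, one can characterise the range of $\Phi$ intrinsically: a map $v \in \decC(V_c,W_c)$ lies in the image of $\Phi$ precisely when $v$ is the complexification of a real map $V \to W$, equivalently when $v$ commutes appropriately with the canonical period-2 automorphisms, i.e.\ $\theta_W \circ v = v \circ \theta_V$ and $v(V) \subseteq W$ (under the identifications of the introduction). This condition is clearly preserved under $\|\cdot\|_{\idec}$-limits (since such limits are in particular cb-norm limits, hence pointwise limits), so the image is automatically closed in $\decC(V_c,W_c)$; completeness of $\decR(V,W)$ then follows from that of $\decC(V_c,W_c)$ via the isometry.

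I do not anticipate a serious obstacle here: the real content is entirely carried by Theorem~\ref{res:RealInComplex} together with the Banach space structure of $\decC(V_c,W_c)$, which we are entitled to assume. The only mild care needed is the step identifying the cb-limit with the dec-limit, i.e.\ making sure the two candidate limits coincide — this is where one must not conflate a real map with a complex-linear one, so I would phrase it via the implication ``$\|\cdot\|_{\idec}$-convergence $\Rightarrow$ $\|\cdot\|_{\icb}$-convergence $\Rightarrow$ pointwise convergence'' and invoke uniqueness of pointwise limits.
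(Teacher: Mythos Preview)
Your proposal is correct and follows essentially the same approach as the paper: both use Theorem~\ref{res:RealInComplex} to get the isometric embedding $u\mapsto u_c$, then pass from dec-convergence to cb-convergence via Corollary~\ref{prop:cb<dec} to identify the limit as some $u_c$ with $u\in\cbR(V,W)$, and finally invoke Theorem~\ref{res:RealInComplex} again to conclude $u\in\decR(V,W)$. The only cosmetic difference is that the paper first checks directly that $\decR(V,W)$ is a vector space and then shows the image is closed, whereas you obtain the vector space structure implicitly from the characterization $u\in\decR(V,W)\iff u_c\in\decC(V_c,W_c)$ and argue completeness first; these are equivalent.
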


\begin{proof}
Let $u,v\in \decR(V,W)$ with $(S_1,S_2)\in \mathfrak{P}(u)$ and $ (T_1,T_2)\in \mathfrak{P}(v),$ and fix $\lambda \in \mathbb{R}.$ We have 
$$\begin{bmatrix}
   1& 0\\0&\lambda
\end{bmatrix}^*\begin{bmatrix}
    S_1& u\\u^*& S_2
\end{bmatrix}\begin{bmatrix}
    1& 0\\0&\lambda
\end{bmatrix} = \begin{bmatrix}
    S_1 & \lambda u\\ \lambda u^* & \lambda^2S_2
\end{bmatrix} \in \cpR(V,M_2(W)).$$
And since $\cpR(V,M_2(W))$ is a positive cone, we have
$$
\begin{bmatrix}
 S_1 & u\\
 u^* & S_2
\end{bmatrix}
+
\begin{bmatrix}
 T_1 & v\\
 v^* & T_2
\end{bmatrix}
=
\begin{bmatrix}
 S_1+T_1 & u+v\\
 (u+v)^* & S_2+T_2
\end{bmatrix}
\in \cpR(V,M_2(W)).$$ Hence $\decR(V,W)$ is a real vector space.   Clearly the map $u \mapsto u_c$ isometrically embeds $\decR(V,W)$ as a subspace of
$\decC(V_c,W_c)$ by \hyperref[res:RealInComplex]{Theorem  \ref*{res:RealInComplex}} and its proof. We claim that this real subspace of $\decC(V_c,W_c)$ is closed.  Let $u_n \in \decR(V,W)$, such that $(u_n)_c \to S \in (\decC(V_c,W_c),\|\cdot\|_{\idec}).$
We must show $S=u_c$ for some $u\in \decR(V,W).$ By \hyperref[prop:cb<dec]{Proposition \ref*{prop:cb<dec}} we have $(u_n)_c \to S$ in $\|\cdot\|_{\icb}.$ Since the copy of $\cbR(V,W)$ is closed in $\cbC(V_c,W_c)$ we see that  $S= u_c$ for $u \in \cbR(V,W)$.
By \hyperref[res:RealInComplex]{Theorem  \ref*{res:RealInComplex}} we have $u \in \decR(V,W)$.
 Since $\decC(V_c,W_c)$ is a Banach space (see \cite[Lemma 6.2]{P}), we are done.
\end{proof}

We will need  to discuss how the complexification $W + iW$ of a real (matrix) ordered space $(W,({\mathfrak C}_n))$ is (matrix) ordered.
As in (\ref{mo})  and in \cite{BMcI} we define matrix cones $\widetilde{{\mathfrak C}}_n \subseteq M_n(W + iW)$  by: $x + iy \in \widetilde{{\mathfrak C}}_n$ if and only if $c(x,y) \in {\mathfrak C}_{2n}$.  
The following result, most of whose details are mentioned in \cite{BMcI}, is consistent with  the operator system identity ncS$(V_c) \cong {\rm ncS}(V)_c$ in \cite[Lemma 3.7]{BMcI}, where ${\rm ncS}(V)$ is the nc state space, that is
$({\rm UCP}(V,M_n))$.  

\begin{lemma} \label{duofs}  For a real operator system $V$, and cardinal $n$, on $M_n((V_c)^*) = {\rm CB}(V_c , M_n(\bC))$  the canonical cone $\widetilde{{\mathfrak C}}_n$ above coincides with 
the expected cone ${\rm CP}(V_c , M_n(\bC))$.  Thus if 
$$\varphi \in M_n((V_c)^*) \cong {\rm CB}(V_c, M_n(\bC)) = {\rm CB}(V,M_n(\bR))_c,$$ so that (uniquely) 
$\varphi = \psi_c + i \sigma_c$ for $\psi, \sigma \in {\rm CB}(V,M_n(\bR))$, then  we have 
$$\varphi \in M_n((V_c)^*)^+ = {\rm CP}(V_c,M_n(\bC))$$ if and only if $c(\varphi)  = c(\psi, \sigma) \in 
M_{2n}(V^*)^+ = {\rm CP}(V,M_{2n}(\bR))$.   
(Here $c(\psi, \sigma)$ is as usual, see {\rm (\ref{cis})} for this notation.)
In this case $\| \varphi \| = \| c(\psi, \sigma) \|$, and $\sigma$ is skew (that is $\sigma^* = - \sigma$). \end{lemma}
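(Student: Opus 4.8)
<br>

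The plan is to reduce everything to the identification $M_n((V_c)^*) \cong M_n(V^*)_c$ together with the definition of the complexification cone $\widetilde{{\mathfrak C}}_n$ and the reasonable-complexification identity ${\rm ncS}(V_c) \cong {\rm ncS}(V)_c$ from \cite[Lemma 3.7]{BMcI}. First I would fix the duality identification $M_n((V_c)^*) = CB(V_c, M_n(\bC))$, and recall the standard fact that the order dual of an operator system $V_c$ has matrix cones $M_n((V_c)^*)^+ = CP(V_c, M_n(\bC))$ (this is \cite[Section 1.6]{BLM}-type material, valid for all cardinals $n$ as noted in the introduction). So one direction of the claimed coincidence $\widetilde{{\mathfrak C}}_n = CP(V_c, M_n(\bC))$ is just a matter of matching two descriptions of the same cone.

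Next I would spell out the complexification identification $CB(V_c, M_n(\bC)) = CB(V, M_n(\bR))_c$ explicitly (this is the content of \cite[Proposition 1.1]{BReal} applied to the dual, combined with $M_n(\bC) = M_n(\bR)_c$). Under this, a map $\varphi \in CB(V_c, M_n(\bC))$ is uniquely $\varphi = \psi_c + i\sigma_c$ with $\psi, \sigma \in CB(V, M_n(\bR))$, where $\psi$ is (up to the $\rho$-projection) the real part of $\varphi$ restricted to $V$. Then the definition of $\widetilde{{\mathfrak C}}_n$ says $\varphi \in \widetilde{{\mathfrak C}}_n$ iff $c(\psi, \sigma) \in {\mathfrak C}_{2n}$ in $M_{2n}(V^*) = CB(V, M_{2n}(\bR))$, i.e.\ iff $c(\psi, \sigma) \in CP(V, M_{2n}(\bR))$. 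The crux is therefore the equivalence
\[
\varphi \in CP(V_c, M_n(\bC)) \iff c(\psi, \sigma) \in CP(V, M_{2n}(\bR)).
\]
I would prove this by the complexification-of-cp-maps machinery already used in the proof of Theorem \ref{res:RealInComplex}: under the identification $V_c \cong \cR_V \subseteq M_2(V)$ and $M_n(\bC) \cong \cR_{M_n(\bR)} = c(M_n(\bC)) \subseteq M_{2n}(\bR)$, the map $\varphi$ becomes $(c \circ \varphi)$, and a canonical shuffle identifies its $2$-amplification restricted to $\cR_V$ with something built out of $c(\psi,\sigma)$; compressing by an isometry as in that earlier proof gives one implication, and the reasonable-complexification property (equivalently, applying the $\rho$-projection $\rho^{(2n)}$) gives the reverse, exactly parallel to the two halves of the proof of Theorem \ref{res:RealInComplex}. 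Alternatively, and more cleanly, I would quote the identity ${\rm ncS}(V_c) \cong {\rm ncS}(V)_c$ of \cite[Lemma 3.7]{BMcI}: scaling shows cp-ness of $\varphi : V_c \to M_n(\bC)$ is equivalent to $\varphi(1)^{-1/2}\varphi(\cdot)\varphi(1)^{-1/2}$ (when $\varphi(1)$ invertible, else a limiting/inflation argument) lying in ${\rm UCP}(V_c, M_n(\bC)) = {\rm ncS}_n(V_c)$, which by that lemma corresponds to a ucp map on $V$ landing in $c(M_n(\bC))$, and unwinding the complexification gives $c(\psi,\sigma) \in CP(V, M_{2n}(\bR))$.

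Once the cone equivalence is in hand, the norm statement $\|\varphi\| = \|c(\psi, \sigma)\|$ is immediate: for a cp map the norm is the cb-norm, and $\|\varphi\|_{\rm cb} = \|\varphi_c\text{-type identification}\| = \|c(\psi,\sigma)\|_{\rm cb}$ follows from the isometric embedding $CB(V,W)_c \hookrightarrow CB(V_c,W_c)$ recalled after Theorem \ref{res:RealInComplex} (here with $W = M_n(\bR)$), noting that $c : M_n(\bC) \to M_{2n}(\bR)$ is completely isometric. Finally, that $\sigma$ is skew ($\sigma^* = -\sigma$) follows from positivity: if $c(\psi,\sigma) \geq 0$ then $c(\psi,\sigma) = c(\psi,\sigma)^*$, and computing $c(\psi,\sigma)^* = c(\psi^*, -\sigma^*)$ from the formula (\ref{cis}) and the involution on $M_{2n}(V^*)$ forces $\psi^* = \psi$ and $\sigma^* = -\sigma$ by uniqueness of the $c(\cdot,\cdot)$ representation; equivalently, selfadjointness of $\varphi$ (which a cp map has, $\varphi(x^*) = \varphi(x)^*$) translates under $\varphi = \psi_c + i\sigma_c$ precisely to $\psi$ selfadjoint and $\sigma$ skew.

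The main obstacle I expect is the bookkeeping around infinite cardinals $n$ and the precise meaning of $M_n(V^*) = CB(V, M_n)$ and of the canonical shuffle in that generality — the shuffle/compression argument is visually clear for finite $n$ but one must make sure the identifications $M_n(B(H)) = B(H^{(n)})$, $M_n(X^*) = CB(X, M_n)$, and the complexification $M_n(X)_c = M_n(X_c)$ all hold and are compatible for arbitrary $n$, which the introduction asserts but which should be invoked carefully. If the shuffle route feels delicate, the ${\rm ncS}(V_c) \cong {\rm ncS}(V)_c$ route sidesteps it, at the cost of an invertibility/limiting argument to pass from ucp to cp, which is routine.
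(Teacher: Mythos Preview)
Your proposal is correct and your ``Route B'' is essentially the paper's approach: the paper cites \cite[Lemma 3.7]{BMcI} for the ${\rm ncS}(V_c)\cong{\rm ncS}(V)_c$ identification and uses \cite[Lemma 6.3]{BMcI} (the Choi--Effros factorization $\varphi=\alpha\Psi\alpha$ with $\alpha=\varphi(1)^{1/2}$ and $\Psi$ ucp) to pass between cp and ucp, so that $c(\varphi)=c(\alpha)\,c(\Psi)\,c(\alpha)$; this sidesteps the invertibility/limiting argument you flag, since the cited lemma handles the general case directly. For the reverse direction and the bound $\|\varphi\|\le\|c(\varphi)\|$ the paper uses the compression $\varphi=u_n^* c(\varphi)u_n$ (your Route A compression), and for $\|c(\varphi)\|\le\|\varphi\|$ it reads it off from the factorization; your norm argument via the reasonable-complexification isometry $X_c\cong\cR_X$ is a valid and slightly cleaner alternative.
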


       \begin{proof}  This follows by a variation of the proof of \cite[Lemma 3.7]{BMcI}.  Alternatively, it may be derived 
from the statement of  \cite[Lemma 3.7]{BMcI}, and \cite[Lemma 6.3]{BMcI}.   Indeed, suppose that 
$\varphi \in M_n((V_c)^*) \cong {\rm CB}(V_c, M_n(\bC))$, and (uniquely) $\varphi = \psi_c + i \sigma_c$ for $\psi, \sigma
\in {\rm CB}(V,M_n(\bR))$.  Then  one direction of the `if and only if'  is obvious since $\varphi = u_n^* c(\varphi) u_n$ where 
$u_n$ is as in \cite{BMcI}.  Thus $\| \varphi \| \leq \| c(\varphi) \|$. Since $$\varphi(x^*) = \psi(x^*) + i \sigma(x^*) = 
\varphi(x)^* = \psi(x)^* - i \sigma(x)^*, \qquad x \in V,$$ $\sigma$ is skew. 

Conversely, if $\varphi \in {\rm CP}(V_c,M_n(\bC))$ then by \cite[Lemma 6.3]{BMcI} we may write 
$\varphi  = \alpha \psi \alpha$ for $\alpha = \varphi(1)^{\frac{1}{2}} \in M_n(\bC)^+$ and $\psi \in {\rm UCP}(V_c,M_n(\bC))$.  Then $c(\psi) \in {\rm UCP}(V,M_{2n}(\bR))$,
and $c(\varphi)  =  c(\alpha) c( \psi) c( \alpha) \in {\rm CP}(V,M_{2n}(\bR))$. 
Thus  $$\| c(\varphi) \| \leq \| c(\alpha)\|^2 =   \| \alpha \|^2
= \| \varphi(1)^{\frac{1}{2}} \|^2 =  \| \varphi(1) \| 
= \|\varphi \|$$ as desired. 
  \end{proof}  

We call $\psi$ (resp.\ $\sigma$) the {\em real part} (resp.\ {\em imaginary part})  of $\varphi$.   More generally we use the same notation if 
$\varphi : V_c \to W_c$ is a complex completely positive map 
and $\varphi = \psi_c + i \sigma_c$ with $\psi, \sigma : V \to W$. 

 \begin{example} \label{exrd}   Let $V, W$ be real operator systems.
 The `imaginary part'  $\sigma$ of a completely positive map $\varphi : V_c \to W_c$ (see the line after Lemma \ref{duofs}) 
 is an important example of a real decomposable map.   
  To see that it is decomposable  note that since $\varphi$ is selfadjoint, we have $$\varphi(x)^* = \psi(x)^* - i \sigma(x)^* = \psi(x^*) + i \sigma(x^*) = \varphi(x^*),
 \qquad x \in V,$$
 so that  $\sigma(x)^* = -\sigma(x^*)$.   That is, $\sigma$ is antisymmetric, or {\em skew-adjoint}.  
 Also $(\psi,\psi) \in \mathfrak{P}(\sigma)$ by definition since $c(\psi,\sigma)$ is cp, and so 
 $\sigma$ is decomposable with $$\| \sigma \|_{\rm dec} \leq \| \psi(1) \| \leq \| \varphi (1) \| = \| \varphi \|.$$  
   In particular if $W = B(H) = M_n(\bR)$ then these `imaginary parts' are the maps   $\sigma
\in {\rm CB}(V,B(H))$  appearing in Lemma \ref{duofs}.  
We can also often arrange that $\sigma$ is (up to a positive scalar) the `imaginary part'  of a nc/matrix state.  Indeed, we shall explore this and some of the importance of this class of real decomposable maps in Section \ref{sec:DecAs}. 
\end{example}

The complex versions of the following string of results (see Chapter 6 of \cite{P}) are well-established in the literature for maps between $C^*$-algebras.
The usual proofs also work for maps between complex operator systems.   The real case can usually be proved using the same methods as in the complex case.  Instead, we derive them instantaneously from the complex results using the above, in particular 
\hyperref[res:RealInComplex]{Theorem \ref*{res:RealInComplex}}, illustrating how useful this result is.   

In what follows, $V$ and $W$ are real operator systems.

\begin{proposition}\label{prop:DecCompDec}
If $u \in \decR(V, W)$ and $u^\prime \in \decR(W, X)$ then $u^\prime \circ u \in \decR(V,X)$ and
$$
\|u^\prime \circ u\|_{\idec} \leq\|u^\prime\|_{\idec} \, \|u\|_{\idec}.
$$
\end{proposition}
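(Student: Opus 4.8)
The plan is to reduce everything to the corresponding complex statement via \hyperref[res:RealInComplex]{Theorem \ref*{res:RealInComplex}}, exactly as the authors promised they would do for this string of results. First I would note that $(u' \circ u)_c = u'_c \circ u_c$; this is a basic functoriality property of complexification of maps, which the excerpt says we may assume familiarity with (it follows immediately from the identification $u_c = (u_2)|_{\cR_V}$ and the analogous identification for $u'_c$, since matrix amplification respects composition). Given this, if $u \in \decR(V,W)$ and $u' \in \decR(W,X)$, then by \hyperref[res:RealInComplex]{Theorem \ref*{res:RealInComplex}} we have $u_c \in \decC(V_c, W_c)$ and $u'_c \in \decC(W_c, X_c)$.

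Next I would invoke the complex version of this proposition, namely that the composition of complex decomposable maps is decomposable with the decomposable norm submultiplicative — this is Proposition~6.6 (or the relevant statement in Chapter 6) of \cite{P}, stated there for $C^*$-algebras but, as the authors repeatedly emphasize, valid with the same proof for maps between complex operator systems. Applying it to $u'_c$ and $u_c$ gives $u'_c \circ u_c \in \decC(V_c, X_c)$ with
\[
\|u'_c \circ u_c\|_{\idec} \leq \|u'_c\|_{\idec}\,\|u_c\|_{\idec}.
\]
Since $u'_c \circ u_c = (u' \circ u)_c$, the `if' direction of \hyperref[res:RealInComplex]{Theorem \ref*{res:RealInComplex}} yields $u' \circ u \in \decR(V,X)$, and the norm equality $\|u'\circ u\|_{\idec} = \|(u'\circ u)_c\|_{\idec}$ together with $\|u_c\|_{\idec} = \|u\|_{\idec}$ and $\|u'_c\|_{\idec} = \|u'\|_{\idec}$ turns the displayed inequality into the desired $\|u' \circ u\|_{\idec} \leq \|u'\|_{\idec}\,\|u\|_{\idec}$.

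I do not anticipate a genuine obstacle here: the only point requiring any care is the verification that complexification is functorial on maps, $(u'\circ u)_c = u'_c \circ u_c$, and that one really is entitled to cite the operator-system (not just $C^*$-algebra) version of the complex result from \cite{P}. Both are routine under the conventions set up in the introduction. If one wanted a self-contained `direct' alternative instead of the complexification proof, one could take witnesses $(S_1,S_2) \in \mathfrak{P}(u)$ and $(T_1,T_2) \in \mathfrak{P}(u')$, consider $\hat{u}$ and $\hat{u'}$, and build a cp map into $M_2(X)$ witnessing decomposability of $u'\circ u$ by composing $\hat{u'}$ (amplified appropriately) with $\hat{u}$, after a canonical shuffle, taking $\max$'s of the resulting diagonal maps; but the complexification route is shorter and is the one the authors signal they prefer, so I would present that.
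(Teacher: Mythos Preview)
Your proposal is correct and follows essentially the same approach as the paper: reduce to the complex case via \hyperref[res:RealInComplex]{Theorem \ref*{res:RealInComplex}}, invoke \cite[Proposition 6.6 (ii)]{P} for $u'_c \circ u_c = (u' \circ u)_c$, and translate the norm inequality back using $\|u\|_{\idec} = \|u_c\|_{\idec}$.
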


\begin{proof}
The result follows from complexification and the corresponding complex result \cite[Proposition 6.6 (ii)]{P}. Let $u,u^\prime$ be as above. By \hyperref[res:RealInComplex]{Theorem \ref*{res:RealInComplex}}, we have $u^\prime \circ u \in \decR(V,X)$ since
 $u^\prime _c \circ u_c = (u^\prime \circ u)_c \in \decC(V_c,X_c)$, and
$$\|u^\prime \circ u\|_{\idec} = \|(u^\prime \circ u)_c\|_{\idec}\leq \|u^\prime _c\|_{\idec} \|u_c\|_{\idec} =\|u^\prime\|_{\idec} \|u\|_{\idec},$$ as desired. 
\end{proof}

\begin{proposition}\label{prop:verifyRuan2}
Let $A$ be a real $C^*$-algebra and $\alpha,\beta \in A.$ Define $\phi:A\to A$ by $\phi(x)= \alpha^* x \beta,$ then $\phi \in \decR(A,A)$ and $\|\phi\|_{\idec}\leq \|\alpha\|\|\beta\|.$
\end{proposition}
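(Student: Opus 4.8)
The plan is to argue by complexification, exactly as for Proposition \ref{prop:DecCompDec}, with the classical direct construction recorded as an alternative. Recall from the remark following \eqref{cis} that for a real $C^*$-algebra $A$ the map $c : A_c \to M_2(A)$ is a faithful $*$-homomorphism, so $A_c$ is a complex $C^*$-algebra and the inclusion $A \hookrightarrow A_c$ is an isometric $*$-monomorphism; in particular $\|\alpha\|_{A_c} = \|\alpha\|$ and $\|\beta\|_{A_c} = \|\beta\|$. Since $\alpha,\beta$ are fixed elements of $A$, the complexification $\phi_c : A_c \to A_c$ is just $z \mapsto \alpha^* z\beta$ (indeed $\phi_c(x+iy) = \alpha^* x\beta + i\,\alpha^* y\beta$). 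Now invoke the standard complex fact (see \cite[Chapter~6]{P}) that for a complex $C^*$-algebra $B$ and $a,b \in B$ the map $z \mapsto a^* z b$ lies in $\decC(B,B)$ with $\idec$-norm at most $\|a\|\,\|b\|$. Taking $B = A_c$, $a=\alpha$, $b=\beta$ yields $\phi_c \in \decC(A_c,A_c)$ and $\|\phi_c\|_{\idec}\le\|\alpha\|\,\|\beta\|$; then Theorem \ref{res:RealInComplex} descends this to $\phi \in \decR(A,A)$ with $\|\phi\|_{\idec} = \|\phi_c\|_{\idec}\le \|\alpha\|\,\|\beta\|$.

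Alternatively, and using only the elementary fact that $x \mapsto a^* x a$ is completely positive on a (real or complex) $C^*$-algebra, I would exhibit explicit witnesses. If $\alpha = 0$ or $\beta = 0$ then $\phi = 0$; otherwise set $t = (\|\beta\|/\|\alpha\|)^{1/2} > 0$ and define $S_1(x) = t^2\,\alpha^* x\alpha$ and $S_2(x) = t^{-2}\,\beta^* x\beta$, which are real cp maps with $\|S_1\|\le t^2\|\alpha\|^2 = \|\alpha\|\,\|\beta\|$ and $\|S_2\|\le t^{-2}\|\beta\|^2 = \|\alpha\|\,\|\beta\|$, hence $\max\{\|S_1\|,\|S_2\|\}\le\|\alpha\|\,\|\beta\|$. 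Writing $R = [\,t\alpha\ \ t^{-1}\beta\,]\in M_{1,2}(A)$ and noting $\phi^*(x) = \phi(x^*)^* = \beta^* x\alpha$, one checks directly that $\hat{\phi}(x) = R^* x R$ as a map $A \to M_2(A)$. Such a conjugation is completely positive: its $n$-th amplification carries a positive $[x_{ij}]\in M_n(A)$ to $D^*[x_{ij}]D\ge 0$, where $D$ is the block-diagonal $n\times 2n$ matrix $\bigoplus_{i=1}^n R$ over $A$, modulo the canonical shuffle identifying $M_n(M_2(A))$ with $M_{2n}(A)$. Thus $(S_1,S_2)\in\mathfrak{P}(\phi)$ and $\|\phi\|_{\idec}\le\|\alpha\|\,\|\beta\|$.

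I do not anticipate a genuine obstacle; the proof is short by either route. The two points that warrant a little care are: (i) in the complexification route, the routine verification that $\phi_c$ really is $z\mapsto\alpha^* z\beta$ under the chosen realization $\cR_A$ of $A_c$, which is immediate since $c$ is a $*$-homomorphism and $\alpha,\beta$ are real; and (ii) in the direct route, the rescaling by $t$, which is essential because the naive witnesses $x\mapsto\alpha^* x\alpha$ and $x\mapsto\beta^* x\beta$ only yield the weaker bound $\max\{\|\alpha\|^2,\|\beta\|^2\}$ rather than $\|\alpha\|\,\|\beta\|$.
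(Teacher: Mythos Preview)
Your complexification route is exactly the paper's proof: the paper simply notes that $\phi_c(x)=\alpha^* x\beta$ on $A_c$, invokes \cite[Proposition~6.10]{P}, and applies Theorem~\ref{res:RealInComplex} to conclude $\|\phi\|_{\idec}=\|\phi_c\|_{\idec}\le\|\alpha\|\,\|\beta\|$. Your alternative direct construction with the rescaled witnesses $S_1,S_2$ and $\hat\phi(x)=R^*xR$ is also correct and is essentially how the cited complex result is proved, so it serves as a self-contained supplement rather than a genuinely different approach.
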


\begin{proof}
Follows immediately from complexification and the complex case (\cite[Proposition 6.10]{P}). Clearly $\phi_c(x)= \alpha^* x \beta$ and so  $\|\phi\|_{\idec} = \|\phi_c\|_{\idec} \leq  
\|\alpha\|\|\beta\|.$ \end{proof}

\begin{lemma}\label{lemm:DirectSumReal}
Let $V$ and $\left(W_{\iota}\right)_{{\iota} \in I}$ be real operator systems  and let $W=\oplus_{{\iota} \in I}^{\infty} \, W_{\iota}.$ Let $u: V \rightarrow W$. We denote $u_{\iota}= p_{\iota} u: V \rightarrow W_{\iota},$ 
where $p_{\iota}$ is the projection onto the ${\iota}$-th coordinate of $W$. Then $u \in \decR(V, W)$ if 
and only if $u_{\iota}\in \decR(V, W_{\iota})$ for all ${\iota}\in I$ with $\sup _{{\iota} \in I}\left\|u_{\iota}\right\|_{\idec}<\infty,$ and $\|u\|_{\idec} = \sup _{{\iota} \in I}\left\|u_i\right\|_{\idec}.$ 
\end{lemma}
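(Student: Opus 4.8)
The plan is to reduce to the complex case via Theorem \ref{res:RealInComplex}, which says $u \in \decR(V,W)$ iff $u_c \in \decC(V_c,W_c)$ with equality of decomposable norms. The first step is to identify the complexification of the $\ell^\infty$-direct sum with the corresponding complex direct sum: one checks $W_c = \bigl(\oplus^\infty_{\iota} W_\iota\bigr)_c \cong \oplus^\infty_{\iota} (W_\iota)_c$ as complex operator systems, which follows from the functoriality of complexification for $\ell^\infty$-sums (this is the kind of identity $F(X_c) \cong F(X)_c$ the introduction advertises; it can also be verified directly since the reasonable complexification is unique and the natural map is a ucoi). Under this identification the coordinate projection $(p_\iota)_c$ is the coordinate projection $W_c \to (W_\iota)_c$, and $(u_\iota)_c = (p_\iota u)_c = (p_\iota)_c\, u_c$. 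Hence $u_\iota$ is the $\iota$-th component of $u_c$ in the sense of the complex version of this lemma.

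Second, I would invoke the complex analogue of the statement (the direct-sum lemma for complex decomposable maps between complex operator systems, which is exactly \cite[Lemma 6.9]{P} or the operator-system version of it; the usual proof works for operator systems): $u_c \in \decC(V_c,W_c)$ iff each $(u_\iota)_c \in \decC(V_c,(W_\iota)_c)$ with $\sup_\iota \|(u_\iota)_c\|_{\idec} < \infty$, and in that case $\|u_c\|_{\idec} = \sup_\iota \|(u_\iota)_c\|_{\idec}$. Now translate back through Theorem \ref{res:RealInComplex}: $u_c \in \decC(V_c,W_c)$ iff $u \in \decR(V,W)$; $(u_\iota)_c \in \decC(V_c,(W_\iota)_c)$ iff $u_\iota \in \decR(V,W_\iota)$; and the norms match term by term, $\|(u_\iota)_c\|_{\idec} = \|u_\iota\|_{\idec}$, $\|u_c\|_{\idec} = \|u\|_{\idec}$. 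Substituting these equalities into the complex statement yields precisely the claimed real statement, including the norm formula $\|u\|_{\idec} = \sup_\iota \|u_\iota\|_{\idec}$.

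The main obstacle is the first step — making precise that complexification commutes with the $\ell^\infty$-direct sum at the level of operator systems, with matching decomposable (equivalently, complete order) structure, and that coordinate projections go to coordinate projections. Once that identification is in hand, everything else is a purely formal transport across the isometry of Theorem \ref{res:RealInComplex}. Should one prefer to avoid even this identification, an alternative is to run the standard direct-sum argument directly in the real category: one direction is immediate since each $p_\iota$ is a (contractive, unital) cp map, so $u_\iota = p_\iota u$ inherits decomposability with $\|u_\iota\|_{\idec} \le \|u\|_{\idec}$ by Proposition \ref{prop:DecCompDec}; for the converse, given witnesses $(S_1^\iota, S_2^\iota) \in \mathfrak{P}(u_\iota)$ with $\max\{\|S_1^\iota\|,\|S_2^\iota\|\} \le \sup_\iota \|u_\iota\|_{\idec} + \varepsilon$ (using Proposition \ref{ss} to normalize), one assembles $S_k = \oplus_\iota S_k^\iota : V \to W$, which is cp into the $\ell^\infty$-sum precisely because each component is cp and the norms are uniformly bounded, and then $(S_1,S_2) \in \mathfrak{P}(u)$ gives $u \in \decR(V,W)$ with $\|u\|_{\idec} \le \sup_\iota \|u_\iota\|_{\idec} + \varepsilon$. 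Combining the two inequalities and letting $\varepsilon \to 0$ finishes it. I would present the complexification proof as the main one and mention this direct argument as a remark.
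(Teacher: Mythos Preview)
Your proposal is correct and follows essentially the same approach as the paper's proof: identify $W_c \cong \oplus_\iota (W_\iota)_c$, check that $(p_\iota)_c$ is the $\iota$-th coordinate projection so that $(u_\iota)_c = (p_\iota)_c u_c$, invoke the complex direct-sum lemma for decomposable maps (the paper cites \cite[Lemma 6.8]{P} rather than 6.9, noting that its proof goes through for operator systems), and transport back via Theorem~\ref{res:RealInComplex}. The paper likewise mentions that the direct real-category argument works as an alternative, just as you do.
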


\begin{proof}  The proof of \cite[Lemma 6.8]{P}
works to show  the complex case of the present result, for operator systems instead of $C^*$-algebras.
(One needs only note that the projections $p_{\iota}$ are cp contractions to prove the converse.) 
The real case is the same, or it follows by complexification: 
We may identify $W_c$ with $\oplus(W_{\iota})_c$. 
(See e.g.\ \cite[Section 5]{BReal}. In \cite[Lemma 5.14]{Sharma2014} the result is shown to hold for real operator spaces, clearly a similar proof holds for real operator systems.)  Then 
$(p_{\iota})_c = q_{\iota},$ where $q_{\iota}$ is the projection onto the ${\iota}$-th coordinate of $W_c.$ Indeed
$$(p_{\iota})_c( (x_{\iota}) + i (y_{\iota})) = x_{\iota} + i y_{\iota} 
= q_{\iota}((x_{\iota} + i y_{\iota})).$$
Hence $(u_{\iota})_c = (p_{\iota} u)_c = (q_{\iota} u_c).$  
We have $u\in \decR(V,W)$ if and only if  $u_c\in \decC(V_c,W_c)$.   Since $u_c$ may be identified with a map $V_c \to \oplus(W_{\iota})_c$, 
it follows from the complex case
that $u_c\in \decC(V_c,W_c)$ if and only if 
$q_{\iota} u_c = (p_{\iota} u)_c \in \decC(V_c,(W_{\iota})_c)$
for all ${\iota}\in I$ with $\sup _{{\iota} \in I}\left\|(u_{\iota})_c \right\|_{\idec} <\infty$.
That is, if and only if $u_{\iota}\in \decR(V, W_{\iota})$ for all ${\iota}\in I$, with $\sup _{{\iota} \in I}\left\|u_{\iota}\right\|_{\idec}<\infty$.  Indeed
 $$\|u\|_{\idec} =  \|u_c\|_{\idec}
= \sup _{{\iota} \in I}\left\|(u_{\iota})_c \right\|_{\idec}
= \sup _{{\iota} \in I}\left\| u_{\iota} \right\|_{\idec},$$ 
as desired.
\end{proof} 

 We identify $M_n(\decR(V,W))$ with $\decR(V,M_n(W))$  in the obvious way, 
thus equipping $\decR(V,W)$ with a series of matrix norms.  It is known that  $\decC(V,W)$ is an operator space for complex operator systems  $V$ and $W$ 
\cite{LM,Han}.

\begin{theorem} \label{thm:RealIsComplex_OS}
Let $V, W$ be operator systems. By identifying $M_n(\decC(V,W))$ with $\decC(V,M_n(W))$, we have that $\decC(V,W)$ is an operator space. Similarly $\decR(V,W)$ is a real operator space when $V,W$ are real operator systems. Moreover, in this case $\decR(V,W)_c \cong \decC(V_c,W_c)$. 
\end{theorem}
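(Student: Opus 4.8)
The plan is to deduce all three assertions at once from a single complex-linear bijection $\Phi:\decR(V,W)_c\to\decC(V_c,W_c)$, in the spirit of Theorem~\ref{res:RealInComplex}. First I would fix the matrix norms as in the statement, defining $M_n(\decC(V,W)):=\decC(V,M_n(W))$ and $M_n(\decR(V,W)):=\decR(V,M_n(W))$. This is legitimate because the $(i,j)$ entry $u_{ij}$ of a decomposable-valued matrix $[u_{kl}]$ is again decomposable with $\|u_{ij}\|_{\idec}\le\|[u_{kl}]\|_{\idec}$: one recovers $u_{ij}$ from $[u_{kl}(\cdot)]$ by compressing with the completely contractive, completely positive scalar column and row maps. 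The $\decC$-part of the first assertion is then the known fact of \cite{LM,Han}, the identification only serving to specify the intended operator space structure. (One could instead verify Ruan's axioms directly for both $\decC$ and $\decR$: the direct sum axiom from Lemma~\ref{lemm:DirectSumReal} applied to the complete order embedding $M_k(W)\oplus M_l(W)\hookrightarrow M_{k+l}(W)$, and the module axiom $\|\alpha u\beta\|\le\|\alpha\|\,\|u\|_{\idec}\,\|\beta\|$ by conjugating a witness $\hat u$ of $u$ by the scalar matrix ${\rm diag}(\lambda\alpha,\lambda^{-1}\beta^{*})$ and optimizing over $\lambda>0$, which preserves complete positivity while leaving the off-diagonal corner equal to $\alpha u(\cdot)\beta$; then invoke Ruan's theorem in its real form \cite{BReal}. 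The complexification route below is shorter and also delivers the last assertion.)

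Next I would introduce the conjugate-linear map $\Theta:\decC(V_c,W_c)\to\decC(V_c,W_c)$ given by $\Theta v=\theta_W\circ v\circ\theta_V$, where $\theta_V,\theta_W$ are the canonical period-$2$ conjugation maps, and verify that $\Theta$ is a period-$2$ conjugate-linear complete isometry of the complex operator space $\decC(V_c,W_c)$. Conjugate-linearity and $\Theta^{2}=\mathrm{id}$ are immediate. Since $\theta_V,\theta_W$ and all their amplifications are conjugate-linear, selfadjoint, complete order automorphisms, $\Theta v$ is complex-linear, $(\Theta v)^{*}=\theta_W\circ v^{*}\circ\theta_V$, and conjugating a witness block map $\hat v$ of $v$ (on the left by the canonical conjugation on $M_2(W_c)$, on the right by $\theta_V$) yields the witness block map of $\Theta v$, of the same norm; the same argument with $M_n(W)$ in place of $W$ shows each amplification $\Theta^{(n)}$ is isometric, hence $\Theta$ is completely isometric. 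I then set $\Phi(a+ib):=a_c+ib_c$, which maps into $\decC(V_c,W_c)$ once one knows $a,b\in\decR(V,W)$.

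Then I would prove $\Phi$ is a bijection. Injectivity is clear on evaluating $a_c+ib_c$ on $V\subseteq V_c$. For surjectivity, given $v\in\decC(V_c,W_c)$, let $a,b:V\to W$ be the real and imaginary parts of the restriction $v|_V$ (post-compose with the real-part ucp projection $\rho:W_c\to W$, resp.\ with $\rho$ after multiplying by $-i$). A direct computation gives $v=a_c+ib_c$ and $\Theta v=a_c-ib_c$, so $a_c=\tfrac12(v+\Theta v)$ and $ib_c=\tfrac12(v-\Theta v)$ both lie in $\decC(V_c,W_c)$; by Theorem~\ref{res:RealInComplex} this forces $a,b\in\decR(V,W)$, and surjectivity follows. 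To upgrade $\Phi$ to a complete isometry and to pin down the real operator space structure, apply Theorem~\ref{res:RealInComplex} to the pair $(V,M_n(W))$, using the ucoi $M_n(W)_c\cong M_n(W_c)$: this shows $u\mapsto u_c$ is a completely isometric embedding of the matricially normed real space $\decR(V,W)$ onto the fixed-point set ${\rm Fix}(\Theta)$, while surjectivity of $\Phi$ gives $\decC(V_c,W_c)={\rm Fix}(\Theta)+i\,{\rm Fix}(\Theta)$. Thus $\decC(V_c,W_c)$ equipped with $\Theta$ is a reasonable complexification of $\decR(V,W)$, in the sense recalled in the introduction (the operator space analogue of: the fixed points of a period-$2$ complete order automorphism of a complex operator system form a real operator system; see \cite{BReal,BCK}). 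Hence $\decR(V,W)$ is a real operator space, and, by uniqueness of the reasonable complexification, $\Phi$ implements a complex complete isometry $\decR(V,W)_c\cong\decC(V_c,W_c)$.

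The hardest step should be the surjectivity of $\Phi$, that is, showing that the real and imaginary parts of a complex decomposable $v:V_c\to W_c$ are again decomposable: a priori there is no reason for the $*$-decomposition $v=a_c+ib_c$ to respect the order-theoretic witness condition~(\ref{decd}). The resolution is that $a_c$ and $ib_c$ are, up to a factor $\tfrac12$, the $\Theta$-symmetric and $\Theta$-antisymmetric parts of $v$, so one only needs $\Theta$ to preserve $\decC(V_c,W_c)$ isometrically; and that in turn rests on the (routine but somewhat delicate) fact that $\theta_V,\theta_W$ and all their amplifications are conjugate-linear, selfadjoint, complete order automorphisms. A secondary, purely bookkeeping point is keeping the canonical identifications $M_n(\decC(V,W))=\decC(V,M_n(W))$, $M_n(X_c)=M_n(X)_c$ and $M_n(W_c)=M_n(W)_c$ mutually compatible, so that the $n=1$ isometry of Theorem~\ref{res:RealInComplex} really does upgrade to a complete isometry.
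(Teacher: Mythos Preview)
Your proposal is correct and takes essentially the same approach as the paper: both use the period-$2$ conjugate-linear automorphism $\Theta v=\theta_W\circ v\circ\theta_V$ on $\decC(V_c,W_c)$ together with Theorem~\ref{res:RealInComplex} (applied at each matrix level) to identify $\decR(V,W)$ with the fixed points and then invoke Ruan's uniqueness of the reasonable complexification. The only organizational difference is that the paper first verifies Ruan's axioms for $\decR(V,W)$ directly (via Proposition~\ref{prop:verifyRuan2} and Lemma~\ref{lemm:DirectSumReal}) and then cites \cite[Theorem 2.3]{BCK} for the $\Theta$-argument, whereas you spell out the $\Theta$-argument and deduce the real operator space structure from it; your parenthetical alternative is precisely the paper's first paragraph.
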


\begin{proof}
As in \cite[Theorem 6.1]{LM}, it is sufficient to check Ruan's axioms. Let $u\in \decR(V,M_n(W)), v\in \decR(V,M_m(W)$ and $\alpha,\beta \in M_n.$ By \hyperref[prop:verifyRuan2]{Proposition~\ref*{prop:verifyRuan2}} applied to  $x\mapsto \alpha u(x) \beta$ it is easy that 
$\|\alpha u \beta\|_{\idec} \leq \|u\|_{\idec} \|\alpha\| \|\beta\|.$
Similarly, 
 under the natural identification, \hyperref[lemm:DirectSumReal]{Lemma \ref*{lemm:DirectSumReal}} gives  $\|u\oplus v\|_{\idec} = \max\{\|u\|_{\idec},\|v\|_{\idec}\},$ so that $\decR(V,W)$ is an operator space.

The map $T \mapsto T_c$ is an isometry ${\rm Dec}_{\bR}(V,W)$ to $\decC(V_c,W_c).$
Indeed this is a complete isometry since $M_n(W)_c = M_n(W_c)$, and by appealing to the isometric case applied to  ${\rm Dec}_{\bR}(V,M_n(W))$. Since $\theta_V := x+iy\mapsto x -iy,\ x,y\in V$ and $\theta_W$ are real complete order isomorphisms, the proof of \cite[Theorem 2.3]{BCK} shows that $\decC(V_c,W_c)$ is a reasonable operator space complexification of $\decR(V,W)$. By Ruan's uniqueness result from \cite{RComp} it is {\em the unique} operator space complexification of $\decR(V,W),$ completing the proof.
\end{proof}

\begin{remark} \label{att}
In the complex case  it is known that the infimum in 
Definition \ref{def:DecR} is attained when $W$ is a complex von Neumann algebra, or more generally if $W$ is an operator system completely contractively complemented in $W^{**}$; see \cite[Remark 1.5]{Haag}, whose proof is the same even if the domain space is an operator system $V$.  Thus $\| u \|_{\rm dec} = \max \{ \| S_1 \| ,\| S_2 \| \}$ for some $(S_1, S_2) \in \mathfrak{P}(u)$. 
To see this is also all valid in the real case, let $W$ be a real von Neumann algebra for example, and $u\in \decR(V,W).$ Then $W_c$ is a complex von Neumann algebra, and $u_c \in \decC(V_c,W_c).$ 
 Hence by the above and Theorem  \ref{res:RealInComplex} and its proof we know that there exists $R_i \in \cpC(V_c,W_c)$ and $(S_1,S_2)\in \mathfrak{P}(u)$
such that 
$$ \|u_c\|_{\idec} \leq \max \{ \| S_1 \| ,\| S_2 \| \} \leq 
\max \{ \| R_1 \|, \| R_2 \| \} = \|u_c\|_{\idec} =
\|u\|_{\idec}.$$   Thus the infimum is attained.

If the infimum is attained, then the proof of Proposition \ref{ss} works to give that result even if $\|u\|_{\idec} = 1$. 
\end{remark}

\begin{proposition}\label{prop:DecIfuCP} 
For $u : V \to W$ the following properties hold:
\begin{enumerate}
\item [{\rm (1)}] If $u \in \cpR(V, W)$, then $\|u\|_{\idec}=\|u\|_{\rm cb}=\|u\|$.\label{enum:u_d=u_cb}
\item  [{\rm (2)}]  If $u$ is self-adjoint (i.e.\ $u=u^*$)  then $u \in \decR(V, W)$ if and only if $u \in {\rm Span}_{\bR} \, \cpR(V, W) = \cpR(V, W) - \cpR(V, W)$, and
in this case
$$
\|u\|_{\idec}=\inf \left\{\left\|u_1+u_2\right\| : u_1, u_2 \in \cpR(V, W),\ u=u_1-u_2\right\}. 
$$ 
 \end{enumerate}
\end{proposition}

\begin{proof} (1)\ If $u\in \cpR(V,W)$ then $u_c\in \cpC(V_c,W_c) \subseteq
\decC(V_c, W_c)$.  Thus 
$u \in \decR(V, W)$, and    $\|u\|_{\idec} = \|u_c\|_{\idec}  = \|u_c\|_{\icb} = \|u\|_{\icb}.$  

(2)\ If $u = u^*$ we have that $u_c = (u_c)^*$ (recall the proof of \hyperref[res:RealInComplex]{Theorem  \ref*{res:RealInComplex}}). Thus  $$\|u\|_{\idec} =\|u_c\|_{\idec} = \inf \{\|v_1 + v_2\|: v_1,v_2 \in \cpC(V_c,W_c),\ u_c = v_1-v_2\}.$$ 
This is dominated by $\inf \left\{\left\|u_1+u_2\right\| : u_1, u_2 \in \cpR(V, W),\ u=u_1-u_2\right\}$, since 
for such $u_1, u_2$ setting $v_i = (u_i)_c$ we have $u_c = v_1-v_2$ and  $$\|v_1 + v_2\| = \| v_1 + v_2\|_{\rm cb} = \| (u_1 + u_2)_c \| _{\rm cb} = \|u_1+u_2 \|_{\rm cb} = \|u_1+u_2 \|.$$ 
Conversely, if $u_c = v_1-v_2$ for $v_1, v_2
\in \cpC(V_c,W_c)$ then as in the last part of the proof of \hyperref[res:RealInComplex]{Theorem  \ref*{res:RealInComplex}} we have $u = \rho \circ (v_1-v_2)_{|V}$, 
and  $u_k = (\rho \circ v_k)_{|V} \in \cpR(V,W)$. 
We have $u = u_1 - u_2$ and 
 $\|u_1+u_2 \| =
  \| \rho \circ (v_1+v_2)_{|V} \| \leq \| v_1+v_2 \|.$   This together with 
 the inequality above 
proves the final equation. 
\end{proof}

{\bf Remarks.}  1)\ The result of course also follows from 
the proof in \cite[Lemma 6.5]{P}, using that $(u,u) \in 
\mathfrak{P}(u) \cap \mathfrak{P}(-u)$. 

\smallskip

2)\ In stark contrast to the complex theory, we do not in general have that $\decR(V,W) = {\rm Span}_{\mathbb{R}}(\cpR(V,W))$.
An example is the `imaginary part map' Im on $V = W = M_n(\bC)$, taking $[a_{ij}]$ to $[{\rm Im} \, a_{ij}]$.   
This map is completely contractive (see e.g.\  \cite[Lemma 2.6]{BMcI}).   
Note that $V$ is real injective by \cite{BCK}, and so $\decR(V,W)$ in this 
example is CB$_{\bR}(V)$ by Proposition \ref{prop:InjectiveImpliesCB=Dec}.  Every map in ${\rm Span}_{\mathbb{R}}(\cpR(V,W))$ is
selfadjoint, but Im is not selfadjoint.

\smallskip

3)\ If $W = B(H)$ then the infimum in (2) is achieved.
This is a well known result due to Wittstock in the complex case.  The real case may be deduced from 
this by applying the `real part projection' somewhat as in the proof of the converse of Theorem \ref{res:RealInComplex}.

\begin{proposition}\label{prop:InjectiveImpliesCB=Dec}
If $W=B_{\mathbb{R}}(H)$  or more generally if $W$ is a real injective $C^*$-algebra, then
$$
\decR(V, W)= \cbR(V, W),
$$
and for any $u \in \cbR(V, W)$ we have
 $\|u\|_{\idec}=\|u\|_{\icb}$.
\end{proposition}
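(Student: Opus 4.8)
The plan is to deduce this from the complex case via complexification, exactly in the spirit of the preceding results. The key input is the classical fact (Wittstock, see \cite[Theorem 6.13]{P} or \cite[Proposition 6.11]{P}) that if $W$ is a complex injective $C^*$-algebra then $\decC(V_c, W_c) = \cbC(V_c, W_c)$ with $\|v\|_{\idec} = \|v\|_{\icb}$ for all $v$. So the first step is to check that if $W$ is a real injective $C^*$-algebra, then $W_c$ is a complex injective $C^*$-algebra. This should be available from the real operator system literature: injectivity of $W$ means $W$ is complemented (via a ucp projection) in $B_{\bR}(H)$ for some real Hilbert space $H$, and complexifying such a projection (using functoriality of complexification of cp maps, and $B_{\bR}(H)_c = B_{\bC}(H)$ or a similar identification from \cite{BReal, BR}) shows $W_c$ is complemented in a $B_{\bC}(K)$, hence injective. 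The special case $W = B_{\bR}(H)$ is then immediate since $B_{\bR}(H)$ is real injective.

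With that in hand, the argument is a two-line sandwich. Given $u \in \cbR(V,W)$, we have $u_c \in \cbC(V_c, W_c)$ with $\|u_c\|_{\icb} = \|u\|_{\icb}$ by \cite[Proposition 1.1]{BReal}. By the complex injective case, $u_c \in \decC(V_c, W_c)$ and $\|u_c\|_{\idec} = \|u_c\|_{\icb}$. Now \hyperref[res:RealInComplex]{Theorem \ref*{res:RealInComplex}} gives $u \in \decR(V,W)$ with $\|u\|_{\idec} = \|u_c\|_{\idec}$. Combining, $\|u\|_{\idec} = \|u_c\|_{\idec} = \|u_c\|_{\icb} = \|u\|_{\icb}$. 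The reverse containment $\decR(V,W) \subseteq \cbR(V,W)$, and the inequality $\|u\|_{\icb} \le \|u\|_{\idec}$, already hold in general by \hyperref[prop:cb<dec]{Corollary \ref*{prop:cb<dec}}, so equality of the two spaces and the two norms follows.

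The main (and really only) obstacle is the bookkeeping around the claim that complexification preserves injectivity of $C^*$-algebras; everything else is a formal consequence of theorems already in the excerpt. I would handle this by reducing to the defining property: a ucp projection $E : B_{\bR}(H) \to W$ complexifies to a ucp (in particular completely contractive) projection $E_c : B_{\bR}(H)_c \to W_c$, and since $B_{\bR}(H)_c$ is (completely order isomorphically) a complex injective von Neumann algebra acting on $H$ regarded as a complex Hilbert space, $W_c$ is a complemented operator subsystem of an injective object and hence injective. If a cleaner citation is available for ``$W$ real injective $\Rightarrow W_c$ complex injective'' (e.g.\ in \cite{BR} or \cite{Li}), I would simply cite it and skip the explicit complementation argument. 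For $W = B_{\bR}(H)$ specifically, one can alternatively note $B_{\bR}(H)_c \cong B_{\bC}(H_c)$ is automatically injective, avoiding the general statement altogether.
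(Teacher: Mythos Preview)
Your proposal is correct and follows essentially the same route as the paper: complexify $u$, invoke the complex injective case (the paper cites \cite[Proposition 6.7]{P}) to get $\|u_c\|_{\idec}=\|u_c\|_{\icb}$, and transfer back via Theorem~\ref{res:RealInComplex}. The paper's proof is terser and does not spell out the step that $W$ real injective implies $W_c$ complex injective (it is taken as known, cf.\ \cite{BCK}), whereas you make this explicit; otherwise the arguments coincide.
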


\begin{proof}
If $u\in \cbR(V,W)$ then  $u_c \in \cbC(V_c,W_c) = \decC(V_c,W_c)$ by \cite[Proposition 6.7]{P}. Hence  $u\in \decR(V,W),$ by Theorem  \ref*{res:RealInComplex}. Furthermore, $$\|u\|_{\rm cb}=\|u_c\|_{\rm cb}=\|u_c\|_{\idec} = \|u\|_{\idec},$$ completing the proof.
\end{proof}

Henceforth we shall usually no longer reference Theorem  \ref*{res:RealInComplex} when it is obvious we are making use of it.

If $M$ is a real von Neumann algebra then $(M_*)_c = (M_c)_*$ as operator spaces. This follows immediately by uniqueness of the operator  space predual of a complex von Neumann algebra, and the fact that $(X^*)_c = (X_c)^*$ completely isometrically isomorphically for any operator space $X$, that $(M_*)_c$ is completely isometric isomorphic to  $(M_c)_*$.  Indeed both $(M_*)_c$ and  $(M_c)_*$ have operator  space dual $M_c$.   To see that $(M_*)_c = (M_c)_*$ as (matrix) ordered  spaces is more complex.
We recall that we discussed above Lemma \ref{duofs} the complexification of a (matrix) ordered space.  
This can be used to describe the matrix cones on 
$(M_*)_c$, and more generally, the cones for $(V_*)_c$ for a dual real operator system $V$.  
In the case that $V$ is a von Neumann algebra $M$, Lemma \ref{dos} below shows that $(M_*)_c \cong (M_c)_*$ as matrix ordered $*$-vector spaces.  We include the more general result because of its independent importance. 
We first recall some real operator space duality from \cite{BReal}:

\begin{lemma}\label{duos}  If $V$ is a real dual operator space with operator space predual $V_*$  then $(V_c)_*  \cong  (V_*)_c$ 
completely isometrically via the map 
$\gamma(\varphi ) = {\rm Re} \, \varphi_{|V}  + i \, 
{\rm Im} \, \varphi_{|V}$. 
\end{lemma}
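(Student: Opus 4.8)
The plan is to build everything on the real operator space duality of \cite{BReal}, in particular the completely isometric identification $(X^*)_c \cong (X_c)^*$ for a real operator space $X$, together with the weak*-continuity properties of complexification recorded there. Taking $X = V_*$ (a real operator space, since $V = (V_*)^*$ is a real dual operator space), that identification realizes $V_c = ((V_*)^*)_c$ completely isometrically as $((V_*)_c)^*$; thus $(V_*)_c$ is an operator space predual of $V_c$, with duality pairing $\langle x+iy,\ \psi + i\sigma\rangle = (\psi(x)-\sigma(y)) + i(\sigma(x)+\psi(y))$. Under this pairing $\psi + i\sigma \in (V_*)_c$ corresponds precisely to the functional $\varphi$ on $V_c$ with $\varphi_{|V} = \psi + i\sigma$, so the natural candidate for the identifying map is $\gamma(\varphi) = {\rm Re}\,\varphi_{|V} + i\,{\rm Im}\,\varphi_{|V}$ (the second summand of the displayed formula should read $i\,{\rm Im}\,\varphi_{|V}$).

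The remaining task is to check that $(V_*)_c$, so realized, coincides with the given predual $(V_c)_*$ as subspaces of $(V_c)^*$, and that $\gamma$ implements this completely isometrically. For the inclusion $(V_c)_* \subseteq (V_*)_c$: if $\varphi \in (V_c)_*$ then, as $V \hookrightarrow V_c$ is weak*-continuous (\cite{BReal,BR}), $\varphi_{|V}$ is weak*-continuous on $V$, hence its real and imaginary parts lie in $V_*$, so $\varphi = \gamma(\varphi) \in (V_*)_c$ under the pairing. For the reverse inclusion: each functional $x+iy \mapsto (\psi(x)-\sigma(y)) + i(\sigma(x)+\psi(y))$ with $\psi, \sigma \in V_*$ is weak*-continuous on $V_c$, by the construction of the dual operator space structure on $V_c$ in \cite{BReal}. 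Since the two descriptions of $V_c$ as a dual space share the same underlying pairing, the predual operator space structures coincide, and $\gamma$ --- which is the identity modulo the ${\rm Re}/{\rm Im}$ reparametrisation --- is a surjective complete isometry. Equivalently: $\Phi : V_c \to ((V_*)_c)^*$ is a weak*-homeomorphism, hence has a pre-adjoint $\Phi_* : (V_*)_c \to (V_c)_*$, and $\gamma = (\Phi_*)^{-1}$ is a surjective complete isometry since a pre-adjoint of a surjective complete isometry is again one; this avoids any separate matrix-norm computation.

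The main obstacle is this weak*-bookkeeping rather than anything algebraic or metric: the completely isometric content is entirely carried by $(X^*)_c \cong (X_c)^*$, but one must carefully invoke the right facts from \cite{BReal,BR} --- that the complexification of a dual operator space is a dual operator space with $V \hookrightarrow V_c$ and $\theta_V$ weak*-continuous --- to be sure the construction yields the \emph{canonical} predual of $V_c$ rather than some other Banach-space predual, operator space preduals not being unique in general. Conversely, the real/imaginary-part decomposition of a $\bC$-linear functional on $V_c$ must be checked to be genuinely compatible with these identifications, which is where a forgetful error (treating a real-linear functional as complex-linear) could creep in.
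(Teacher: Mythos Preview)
The paper does not give its own proof of this lemma; it is stated as a fact ``recall[ed]'' from \cite{BReal}, with no proof environment. Your argument is correct and amounts to a reconstruction of the standard proof: apply the completely isometric identification $(X^*)_c \cong (X_c)^*$ with $X = V_*$ to exhibit $(V_*)_c$ as an operator space predual of $V_c$, then verify via weak*-continuity of the inclusion $V \hookrightarrow V_c$ and of the canonical conjugation that this predual coincides with the given $(V_c)_*$ as a subspace of $(V_c)^*$. You also correctly catch the typo in the displayed formula: the second summand should read $i\,{\rm Im}\,\varphi_{|V}$, not $i\,{\rm Re}\,\varphi_{|V}$.

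One minor comment: your ``equivalently'' paragraph, deducing the complete isometry from the existence of a pre-adjoint $\Phi_*$, is not genuinely an independent route. Asserting that $\Phi : V_c \to ((V_*)_c)^*$ is a weak*-homeomorphism (with $V_c$ carrying its $\sigma(V_c,(V_c)_*)$ topology) already presupposes that the two candidate preduals determine the same weak* topology, which is exactly what your inclusion argument establishes. This is a matter of presentation rather than a gap; once the images in $(V_c)^*$ coincide, both inherit the same operator space structure from $(V_c)^*$, and the agreement of this with the complexification structure on $(V_*)_c$ is precisely the content of $(X^*)_c \cong (X_c)^*$.
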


   The following is the weak* version of \cite[Lemma 6.3]{BMcI}.  This is no doubt in the literature in the complex case, but since a reference does not 
come to hand we supply  a proof in the real case. 

    \begin{lemma}\label{CEun} Let $\varphi : V \to B(H)$ be a completely positive weak* continuous map on a real dual operator system.  Let $a = \varphi(1)^{\frac{1}{2}}$.  
 Then there exists weak* continuous ucp  $\Psi : V \to B(H)$ such that $\varphi = a \Psi(\cdot) a$.  \end{lemma}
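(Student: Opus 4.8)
The plan is to mimic the standard complex Stinespring-rescaling argument (the one behind \cite[Lemma 6.3]{BMcI}), being careful to stay in the weak* continuous category throughout. First I would apply the real weak* continuous Stinespring representation to $\varphi$: there exist a real Hilbert space $K$, a weak* continuous unital $*$-representation $\pi : V_{\mathrm{vN}} \to B(K)$ of the von Neumann algebra generated by $V$ (or, working directly, a normal completely positive dilation), and a bounded operator $T : H \to K$ with $\varphi(x) = T^* \pi(x) T$ and $\|T\|^2 = \|\varphi(1)\| = \|a\|^2$. Concretely, since $V$ is a dual operator system and $\varphi$ is weak* continuous, $\varphi$ extends (via the real analogue of Arveson's extension plus normality of the resulting cp map, or simply by the real operator system version already available from \cite{BR}) to a weak* continuous cp map which then has a normal Stinespring dilation; this is where I would cite the real case of the relevant weak* Stinespring theorem.

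Next, with $\varphi(x) = T^* \pi(x)T$ and $a = \varphi(1)^{1/2} = (T^*T)^{1/2} = |T|$, take the polar decomposition $T = u|T| = ua$ with $u : H \to K$ a partial isometry whose initial space is $\overline{\mathrm{ran}\, a}$. Then $\varphi(x) = a u^* \pi(x) u\, a$, and the candidate is $\Psi(x) := u^* \pi(x) u$. This $\Psi$ is completely positive and weak* continuous (it is a compression of the normal representation $\pi$, and $x \mapsto u^* \pi(x) u$ is clearly weak* continuous since $\pi$ is). The one point needing care is unitality: $\Psi(1) = u^* u$ is the projection onto $\overline{\mathrm{ran}\, a}$, which need not be all of $H$. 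I would fix this exactly as in \cite[Lemma 6.3]{BMcI}: on the orthogonal complement $(\overline{\mathrm{ran}\, a})^\perp = \ker a$, add the map $x \mapsto \psi_0(x) (1 - u^*u)$ where $\psi_0$ is any fixed state of $V$ (which exists and can be taken weak* continuous, e.g.\ a normal state of the ambient von Neumann algebra restricted to $V$), or more simply compose with a fixed normal conditional expectation. Replacing $\Psi$ by $\Psi + \psi_0(\cdot)(1-u^*u)$ keeps weak* continuity and complete positivity, makes the map unital, and does not change $a\Psi(\cdot)a$ since $a(1-u^*u) = 0 = (1-u^*u)a$. Thus $\varphi = a\Psi(\cdot)a$ with $\Psi$ weak* continuous and ucp, as required.

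The main obstacle I anticipate is not the rescaling bookkeeping but ensuring the \emph{weak* continuity} at each step in the \emph{real} setting: one needs a genuinely normal/weak* continuous Stinespring dilation for weak* continuous real cp maps on a real dual operator system, and one must know the compression $u^*\pi(\cdot)u$ stays weak* continuous. Both are routine real-case analogues of well-known complex facts, and if a direct real reference is not at hand I would instead derive the whole lemma by complexification: apply the complex version of \cite[Lemma 6.3]{BMcI} (weak* version) to $\varphi_c : V_c \to B(H)_c = B(H_c)$ (or to the map into $B(H)$ viewed inside its complexification), obtain a complex weak* continuous ucp $\widetilde\Psi$ with $\varphi_c = a\widetilde\Psi(\cdot)a$, and then compose with the real part projection $\rho$ as in the proof of Theorem~\ref{res:RealInComplex} to land back in the real picture; since $a = \varphi(1)^{1/2} \in B(H)$ is real and $\rho$ is weak* continuous ucp, $\Psi := \rho\circ\widetilde\Psi_{|V}$ is weak* continuous ucp and $a\Psi(\cdot)a = \rho(a\widetilde\Psi(\cdot)a) = \rho(\varphi_c(\cdot)) = \varphi$. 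This complexification route is the one I would actually write down, as it is shortest and sidesteps re-proving real weak* Stinespring from scratch.
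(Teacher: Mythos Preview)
Your two routes are each reasonable in outline, but both lean on inputs that are precisely what is \emph{not} available off the shelf here, and the paper's proof sidesteps them.

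Route 1 needs a normal Stinespring dilation for a weak*-continuous cp map defined only on a dual operator \emph{system}, not on a von Neumann algebra. There is no canonical ``$V_{\mathrm{vN}}$'' on which $\varphi$ is already defined; to get one you would first have to extend $\varphi$ to a weak*-continuous cp map on an ambient $W^*$-algebra (Arveson extension preserving normality), and that is a nontrivial step you have not justified. Route 2 assumes the \emph{complex} weak*-version of this very lemma; the paper explicitly remarks that although this is ``no doubt in the literature,'' a reference did not come to hand, which is why a direct proof is given. So the complexification shortcut is circular as written.

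The paper takes a different and more economical path. It invokes the \emph{non}-weak* factorization $\varphi = a\Psi(\cdot)a$ already available from \cite[Lemma 6.3]{BMcI} (a Choi--Effros argument, not Stinespring), and then checks that the specific $\Psi$ built there is weak* continuous. Writing $e$ for the range projection of $a$, one tests $\langle \Psi(x_t)a\zeta, a\eta\rangle = \langle \varphi(x_t)\zeta,\eta\rangle$ for $\zeta,\eta \in H$; since $aH$ is dense in $eH$ and the net is bounded, a Krein--Smulian argument gives weak* continuity of $e\Psi(\cdot)e$. The Choi--Effros construction has the further features $e\Psi(\cdot)e = \Psi(\cdot)e$ and $\Psi(\cdot)(1-e)$ is a fixed-state piece (hence obviously weak* continuous), which finishes the proof. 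This avoids any appeal to a normal Stinespring for operator systems or to the unlocated complex weak* lemma, at the cost of looking inside the BMcI construction rather than treating it as a black box. Your polar-decomposition picture and the ``add $\psi_0(\cdot)(1-u^*u)$'' correction are morally the same endgame, but the paper's density/Krein--Smulian step is what actually supplies the weak* continuity without extra hypotheses.
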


                       \begin{proof}  We have  $\varphi = a \Psi(\cdot) a$ by \cite[Lemma 6.3]{BMcI}, we just need to argue that $\Psi$ may be taken to be 
            weak* continuous.   Suppose that $\overline{aH} = eH$ for projection $e$, and we have a bounded net $x_t \to x$ weak* in $V$.
            Then for $\zeta ,  \eta \in eH$ we have 
            $$\langle \Psi(x_t) a \zeta , a \eta \rangle = \langle \varphi(x_t)  \zeta ,  \eta \rangle \to \langle \varphi(x)  \zeta ,  \eta \rangle = 
            \langle \Psi(x) a \zeta , a \eta \rangle .$$ Since $aH$ is dense in $eH$, we have $e \Psi( \cdot) e$ weak* continuous by a standard Krein-Smulian argument.
            A careful look at the Choi-Effros argument cited in the proof of \cite[Lemma 6.3]{BMcI} now shows that $\Psi$ as defined there is 
            weak* continuous ($e \Psi( \cdot) e = \Psi( \cdot) e$, and $\Psi( \cdot) (1-e)$ is clearly weak* continuous).  \end{proof}  
            
                  If $V$ is a dual operator system then 
                  we write NCP$(V,B(H))$ for the 
                  weak* continuous (`normal') cp maps.  The `normal matrix/nc state space' need not be weak* closed.  However we still have: 

\begin{lemma}\label{dos}  If $V$ is a real operator system with operator space predual $V_*$ equipped with the canonical 
matrix cones, and if $(V_c)_*$ is  equipped with its canonical 
matrix cones, and  $n$ is a cardinal, then $\varphi \in   M_n((V_c)_*)^+ = {\rm NCP}(V_c,M_n(\bC))$ if and only if $c(\varphi) \in 
M_{2n}(V_*)^+ = {\rm NCP}(V,M_{2n}(\bR))$.  
Hence the canonical completely isometric isomorphism $(V_*)_c \cong (V_c)_*$ 
from  Lemma {\rm \ref{duos}} is also a complete order $*$-isomorphism as matrix ordered $*$-vector spaces. \end{lemma}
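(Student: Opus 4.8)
The plan is to mimic, at the level of weak* continuous maps, the argument of Lemma~\ref{duofs}, using Lemma~\ref{CEun} as the normal analogue of \cite[Lemma 6.3]{BMcI}. Throughout I work with the identification $M_n((V_c)_*) = \mathrm{NCB}(V_c, M_n(\bC))$ (weak* continuous cb maps), which is legitimate since $(V_c)_* = (V_*)_c$ has operator space dual $V_c$ by Lemma~\ref{duos}, and likewise $M_{2n}(V_*) = \mathrm{NCB}(V, M_{2n}(\bR))$. Note first that the canonical cone on $M_n((V_c)_*)$, defined via complexification of matrix ordered spaces (as recalled just before Lemma~\ref{duofs}), is by definition the set of $\varphi = \psi_c + i\sigma_c$ with $c(\psi,\sigma) \in M_{2n}(V_*)^+$; since the cone on $M_{2n}(V_*)$ is $\mathrm{NCP}(V, M_{2n}(\bR))$, this is \emph{exactly} the right-hand condition $c(\varphi) \in M_{2n}(V_*)^+ = \mathrm{NCP}(V, M_{2n}(\bR))$. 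So the real content is the identification of this canonical cone with $\mathrm{NCP}(V_c, M_n(\bC))$.

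For the nontrivial inclusion, suppose $c(\varphi) = c(\psi,\sigma) \in \mathrm{NCP}(V, M_{2n}(\bR))$. As in the proof of Lemma~\ref{duofs}, $\varphi = u_n^* \, c(\varphi) \, u_n$ where $u_n$ is the column isometry from \cite{BMcI} identifying $M_n(\bC)$ completely positively inside $M_{2n}(\bR)$; this expresses $\varphi$ as a composition of a normal cp map with $*$-homomorphism-type corner maps, hence $\varphi \in \mathrm{NCP}(V_c, M_n(\bC))$ and $\|\varphi\| \le \|c(\varphi)\|$. (That $\varphi$ is weak* continuous is clear from the weak* continuity of $c(\varphi)$ and the fact that compression by fixed matrices is weak* continuous.) Conversely, suppose $\varphi \in \mathrm{NCP}(V_c, M_n(\bC))$. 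Apply Lemma~\ref{CEun} with $H = \ell^2_n(\bC)$: writing $\alpha = \varphi(1)^{1/2} \in M_n(\bC)^+$ we get a weak* continuous ucp $\Psi : V_c \to M_n(\bC)$ with $\varphi = \alpha \Psi(\cdot) \alpha$. Now $c(\Psi) : V \to M_{2n}(\bR)$ is ucp, and it is weak* continuous because $c$ is a weak*-continuous (corner) embedding; hence $c(\Psi) \in \mathrm{NUCP}(V, M_{2n}(\bR))$, and $c(\varphi) = c(\alpha)\, c(\Psi)\, c(\alpha) \in \mathrm{NCP}(V, M_{2n}(\bR))$, with the norm bound $\|c(\varphi)\| \le \|c(\alpha)\|^2 = \|\alpha\|^2 = \|\varphi(1)\| = \|\varphi\|$ exactly as in Lemma~\ref{duofs}. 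This proves the displayed equivalence; combining the two norm inequalities also gives $\|\varphi\| = \|c(\varphi)\|$, so $\gamma$ of Lemma~\ref{duos} is isometric at matrix levels in the way already known, and the cone identification just proved shows it is a complete order $*$-isomorphism.

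The one point requiring a little care — and what I expect to be the main (minor) obstacle — is the weak* continuity bookkeeping: one must check that Lemma~\ref{CEun} genuinely applies (the domain $V_c$ is a dual operator system with predual $(V_*)_c$ by Lemma~\ref{duos}, and $\varphi$ is weak* continuous), and that the maps $c(\cdot)$ and the corner compressions preserve weak* continuity. All of this is routine once the right preduals are identified, but it is where a careless adaptation of the $C^*$-algebra argument could go wrong, so I would state it explicitly. Everything else is a direct transcription of the proof of Lemma~\ref{duofs} with ``cp'' replaced by ``normal cp'' throughout, together with the observation in the first paragraph that the ``canonical'' complexified cone is \emph{by its very definition} the $c(\varphi) \in M_{2n}(V_*)^+$ condition, so no separate work is needed to match up the two descriptions of the cone on $(V_c)_*$.
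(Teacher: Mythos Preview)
Your proposal is correct and follows essentially the same approach as the paper's own proof: the paper simply states that the first assertion is a variant of the proof of Lemma~\ref{duofs} using Lemma~\ref{CEun} (the weak* analogue of \cite[Lemma 6.3]{BMcI}) together with \cite[Lemma 5.2]{BReal} for the weak* bookkeeping, and that the final assertion is a restatement. You have carefully expanded exactly this sketch, including the explicit weak* continuity checks the paper only alludes to.
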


   \begin{proof} The first assertion follows from a simple variant of the proof of the `if and only if' in Lemma \ref{duofs}, using \cite[Lemma 5.2]{BReal}
   and Lemma \ref{CEun}.  The final assertion is a restatement of the first assertion, by the idea above Lemma {\rm \ref{duos}}: 
if (uniquely) $\varphi = \psi_c + i \sigma_c$ for $\psi, \sigma : V \to M_n(\bR)$ then 
 $c(\varphi)  = c(\psi, \sigma) \in 
 {\rm NCP}(V,M_{2n}(\bR))$ if and only if   $\varphi \in {\rm NCP}(V_c,M_n(\bC))$.  \end{proof}

 Note that the canonical projection $(V_c)^* = (V^*)_c \to V^*$ (resp.\ $(V_c)_* = (V_*)_c \to V_*$) takes            the canonical (matrix) positive cones onto the canonical (matrix) positive cones of $V^*$ (resp.\ $V_*$). 
            This gives another description of these cones.

 Let $u$ be a linear map between a real operator system   $V$ and a real von Neumann algebra $M.$ We define the map $$\tilde{u}: V^{**}\to M,\  \tilde{u}(x) := (u^{\star}|_{M_*})^{\star},$$ where $^\star$ is used to indicate the adjoint map. 

If  $A$ is a real $C^*$-algebra we may identify $(A_c)^{**}$ with $(A^{**})_c$ as $W^*$-algebras (see \cite[Lemma 5.2]{BReal} and \cite[Chapter 5]{Li}).
Similarly, $(V_c)^{**} \cong (V^{**})_c$ unitally 
complete order isomorphically as dual operator systems
\cite{BR} (see also Lemma \ref{duofs}).

\begin{lemma} \label{decdu}
Let $u: V \rightarrow M$ be a linear map from a real operator system $V$ to a real von Neumann algebra $M$. If $u \in$ $\decR(V, M)$ then $\tilde{u}  \in \decR(V^{* *}, M)$ and $\|\tilde{u} \|_{\idec}=\|u\|_{\idec}$.
\end{lemma}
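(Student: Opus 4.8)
My plan is to reduce the real statement to the corresponding complex fact via complexification, exactly in the spirit of the rest of Section \ref{sec:RealDecMaps}. The complex analogue — that for a bounded linear $v : X \to N$ with $X$ a complex operator system and $N$ a complex von Neumann algebra, $v \in \decC(X,N)$ implies $\tilde v \in \decC(X^{**},N)$ with equal decomposable norms — is in the literature (e.g.\ the relevant part of Chapter 6 of \cite{P}, whose proof goes through verbatim for operator systems in place of $C^*$-algebras). So the real case should follow once I check that complexification is compatible with the construction $u \mapsto \tilde u$.

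The key identification I would establish first is that $(\tilde u)_c = \widetilde{(u_c)}$ under the canonical isomorphisms $(V_c)^{**} \cong (V^{**})_c$ (as dual operator systems, see the discussion before the lemma using \cite{BR} and Lemma \ref{duofs}) and $(M_c)_* \cong (M_*)_c$ (Lemma \ref{dos}, as matrix ordered spaces). Concretely: $\tilde u$ is defined as $(u^\star|_{M_*})^\star$, i.e.\ the unique weak* continuous extension of $u$ to $V^{**}$ that is weak* continuous as a map into $M$; equivalently $\tilde u = \iota^\star$ where $\iota : V^{**} \to M^{**}$ extends $u^{**}$ composed with the normal conditional-expectation-type projection... more simply, $\tilde u$ is characterized by: $\tilde u$ is weak* continuous and $\tilde u|_V = u$. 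Now $(u_c)^{\widetilde{\phantom{u}}}$ is the unique weak* continuous map $(V_c)^{**} \to M_c$ restricting to $u_c$ on $V_c$, while $(\tilde u)_c : (V^{**})_c \to M_c$ is weak* continuous (complexification of a weak* continuous map between dual operator spaces is weak* continuous — this is in \cite{BReal}) and restricts on $(V)_c \subseteq (V^{**})_c$ to $(\tilde u|_V)_c = u_c$. By uniqueness of weak* continuous extensions (a Krein–Smulian / density argument, since $V$ is weak* dense in $V^{**}$ and hence $V_c$ is weak* dense in $(V^{**})_c$), the two maps agree. This is the step I expect to require the most care: matching up the two distinct descriptions of the bidual complexification and verifying the weak* density and continuity bookkeeping, especially that the isomorphism $(V_c)^{**} \cong (V^{**})_c$ is a weak*-homeomorphism restricting to the identity-like embedding on $V_c$.

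Once $(\tilde u)_c = \widetilde{(u_c)}$ is in hand, the proof concludes in two lines. Suppose $u \in \decR(V,M)$. By Theorem \ref{res:RealInComplex}, $u_c \in \decC(V_c, M_c)$ with $\|u_c\|_{\idec} = \|u\|_{\idec}$. Since $M_c$ is a complex von Neumann algebra, the complex case gives $\widetilde{(u_c)} \in \decC((V_c)^{**}, M_c) \cong \decC((V^{**})_c, M_c)$ with $\|\widetilde{(u_c)}\|_{\idec} = \|u_c\|_{\idec}$. But $\widetilde{(u_c)} = (\tilde u)_c$, so by Theorem \ref{res:RealInComplex} again (applied to the real operator system $V^{**}$ and the real von Neumann algebra $M$), $\tilde u \in \decR(V^{**}, M)$ with $\|\tilde u\|_{\idec} = \|(\tilde u)_c\|_{\idec} = \|u_c\|_{\idec} = \|u\|_{\idec}$, as desired. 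Alternatively, if one prefers to avoid the complex literature, one can argue directly: take $(S_1, S_2) \in \mathfrak{P}(u)$ realizing the norm up to $\varepsilon$ (using Remark \ref{att}, the infimum is attained since $M$ is a von Neumann algebra), form $\hat u \in \cpR(V, M_2(M))$, pass to $\tilde{\hat u} \in \cpR(V^{**}, M_2(M))$ — normal cp extensions exist and preserve norms — and read off that $(\tilde S_1, \tilde S_2) \in \mathfrak{P}(\tilde u)$ with $\|\tilde S_i\| = \|S_i\|$; the reverse inequality $\|u\|_{\idec} \le \|\tilde u\|_{\idec}$ is immediate since $u = \tilde u|_V$ and restriction cannot increase the decomposable norm (compose with the inclusion $V \hookrightarrow V^{**}$, which is ucp, and use Proposition \ref{prop:DecCompDec}).
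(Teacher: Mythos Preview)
Your proposal is correct and follows essentially the same approach as the paper: reduce to the complex case via Theorem \ref{res:RealInComplex} and the complex analogue (Lemma 6.9 in \cite{P}), the only real work being the identification $(\tilde u)_c = \widetilde{(u_c)}$, which both you and the paper verify by observing that both maps are weak* continuous and agree on $V$ (resp.\ $V_c$). The paper's argument is a bit terser (it does not spell out the Krein--Smulian/density bookkeeping you flag, nor does it give your alternative direct argument), but the route is the same.
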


\begin{proof}
Let $u\in \decR(V, M)$.  Then  $u_c \in \decC(V_c, M_c),$ which in turn gives 
$\stackrel{\resizebox{3 mm}{1mm}{$\sim$}}{u_c} \, 
\in \decC((V_c)^{**}, M_c)$ by the complex operator system case of \cite[Lemma 6.9]{P}.  Thus we are done, provided that $ \stackrel{\resizebox{3 mm}{1mm}{$\sim$}}{u_c} \,\in  \decC((V_c)^{**}, M_c)$ agrees with $(\tilde{u})_{c} \in  \decC((V^{**})_c, M_c)$,  under the above identification $(V^{{**}})_c \cong (V_c)^{**}$. We need only check that these maps restrict to the same map on $V^{**},$ and 
indeed only on $V$ since they are weak* continuous. However they both clearly restrict to $u$ on $V$. 
\end{proof}
\begin{proposition}
Let $C, A$ be real $C^*$-algebras. Let $u: C \rightarrow A^{**}$ and let $\left(u_i\right)$ be a net in the unit ball of $\decR(C, A)$ such that $u_i(x) \rightarrow u(x)$ with respect to $\sigma\left(A^{* *}, A^*\right)$ for any $x \in C$. Then $u \in \decR(C, A^{* *})$ with $\|u\|_{\idec} \leq 1$.
\end{proposition}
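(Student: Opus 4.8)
The plan is to derive this from the complex case by complexification, exactly in the spirit of the rest of this section. Recall that $A^{**}$ is a real von Neumann algebra and that we may identify $(A^{**})_c$ with $(A_c)^{**}$ as $W^*$-algebras (noted above, using \cite[Lemma 5.2]{BReal} and \cite[Chapter 5]{Li}), with correspondingly $(A^*)_c \cong (A_c)^*$ and $(A_*)_c \cong (A_c)_*$ as operator spaces. Under these identifications the complexification of $u : C \to A^{**}$ is a complex linear map $u_c : C_c \to (A^{**})_c \cong (A_c)^{**}$, and by Theorem \ref{res:RealInComplex} each $(u_i)_c : C_c \to A_c$ satisfies $\|(u_i)_c\|_{\idec} = \|u_i\|_{\idec} \le 1$. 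Thus $((u_i)_c)$ is a net in the unit ball of $\decC(C_c, A_c)$.

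First I would check that $(u_i)_c(z) \to u_c(z)$ with respect to $\sigma((A_c)^{**}, (A_c)^*)$ for each $z \in C_c$. Writing $z = x + iy$ with $x, y \in C$ and using the concrete model $\cR$, in which $(u_i)_c = ((u_i)_2)_{|\cR_C}$ and $u_c = (u_2)_{|\cR_C}$, the matrix entries of $(u_i)_c(z)$ are among $u_i(x), u_i(\pm y)$ and those of $u_c(z)$ the corresponding $u(x), u(\pm y)$; by hypothesis $u_i(x) \to u(x)$ and $u_i(\pm y) \to u(\pm y)$ in $\sigma(A^{**}, A^*)$. Since $(A_c)^* \cong (A^*)_c$ (so every functional in $(A_c)^*$ has the form $f + ig$ with $f, g \in A^*$; cf.\ the discussion preceding Lemma \ref{duos}), this entrywise convergence gives $\langle (u_i)_c(z), \omega \rangle \to \langle u_c(z), \omega \rangle$ for all $\omega \in (A_c)^*$, i.e.\ $(u_i)_c(z) \to u_c(z)$ weak* in $(A_c)^{**}$.

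Now I would invoke the complex case of the statement for the $C^*$-algebras $C_c$ and $A_c$ (standard for $C^*$-algebras; see Chapter 6 of \cite{P}, or argue directly as indicated below), obtaining $u_c \in \decC(C_c, (A_c)^{**})$ with $\|u_c\|_{\idec} \le 1$. Transporting back along $(A_c)^{**} \cong (A^{**})_c$ and applying Theorem \ref{res:RealInComplex} to the map $u : C \to A^{**}$ gives $u \in \decR(C, A^{**})$ with $\|u\|_{\idec} = \|u_c\|_{\idec} \le 1$, as required.

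The step I expect to be the main obstacle is the bookkeeping in the second paragraph: one must ensure that the various identifications of complexifications ($(A^{**})_c \cong (A_c)^{**}$, $(A^*)_c \cong (A_c)^*$, and the model for $C_c$) are mutually compatible, and in particular that $u_c$ genuinely is the complexification of $u$ regarded as a map into $A^{**}$ (rather than merely some real linear extension), so that the weak* convergence transfers correctly. If one prefers a self-contained argument not quoting the literal complex statement, one can instead proceed directly: for each $i$ pick witnesses $(S_1^i, S_2^i) \in \mathfrak{P}(u_i)$ with $\max\{\|S_1^i\|, \|S_2^i\|\} \le \|u_i\|_{\idec} + \varepsilon \le 1 + \varepsilon$, regard these (and the $u_i$) as maps into $A^{**} \subseteq B(H)$, pass to a subnet so that $S_k^i \to S_k$ in the point-weak* topology (the ball $\{T \in \cpR(C, B(H)) : \|T\| \le 1 + \varepsilon\}$ being point-weak* compact and $A^{**}$ weak* closed in $B(H)$), and use that the cone $\cpR(C, M_2(A^{**}))$ is point-weak* closed, together with weak* continuity of the involution on $A^{**}$, to conclude that $\begin{bmatrix} S_1 & u \\ u^* & S_2 \end{bmatrix}$ is cp; hence $(S_1, S_2) \in \mathfrak{P}(u)$ and $\|u\|_{\idec} \le 1 + \varepsilon$, and letting $\varepsilon \to 0$ finishes the proof.
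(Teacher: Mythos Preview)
Your proposal is correct and follows essentially the same route as the paper: complexify, check that $(u_i)_c \to u_c$ pointwise weak* in $(A_c)^{**}$, apply the complex version of the statement (the paper cites \cite[Lemma 8.24]{P} rather than Chapter 6), and then use Theorem \ref{res:RealInComplex} to descend. The paper's proof compresses your second paragraph into a citation of \cite[Lemma 5.2]{BReal}; your explicit entrywise verification via the $\cR$-model and your additional direct argument are fine but not needed.
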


\begin{proof} Let $(u_i)$ and $u$ be as above. Of course $((u_i)_c)$ embeds into the space $\decC(C_c,(A^{**})_c)$, which we identify with $\decC(C_c,(A_c)^{**}).$  Then $(u_i)_c \to u_c$ with respect to $\sigma((A_c)^{**},(A_c)^{*})$ by \cite[Lemma 5.2]{BReal}.  So 
$u_c  \in {\rm Ball}(\decC(C_c,(A^{**})_c))$ by Lemma 8.24 in \cite{P}.   Thus $u \in \decR(C, A^{* *})$ with $\|u\|_{\idec} \leq 1$. 
\end{proof}

This can be used to verify that the proof of Theorem  8.25 in \cite{P}  works in the real case:

\begin{corollary} \label{nbid}  For any $n$ and  real $C^*$-algebra $A$ we have canonical isometries  $\decR(E,A^{**}) \cong \decR(E,A)^{**}$
if $E = M_n(\bR)$ or $E = \ell^\infty_n(\bR)$. 
\end{corollary}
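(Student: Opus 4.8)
The plan is to deduce this from the complex case \cite[Theorem 8.25]{P} by complexification, as with the other results in this section. Write $E$ for $M_n(\bR)$ or $\ell^\infty_n(\bR)$, so that $E_c$ is $M_n(\bC)$, respectively $\ell^\infty_n(\bC)$, as a complex operator system. The structural inputs are: the identification $\decR(E,A)_c \cong \decC(E_c,A_c)$ completely isometrically (Theorem \ref{thm:RealIsComplex_OS}); the identification $(A^{**})_c \cong (A_c)^{**}$ as $W^*$-algebras (see \cite[Lemma 5.2]{BReal} and \cite[Chapter 5]{Li}); and $(X^{**})_c \cong (X_c)^{**}$ completely isometrically for any real operator space $X$ (apply $(Y^*)_c \cong (Y_c)^*$ twice). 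Chaining these gives
$$\big(\decR(E,A)^{**}\big)_c \cong \decC(E_c,A_c)^{**}, \qquad \big(\decR(E,A^{**})\big)_c \cong \decC(E_c,(A_c)^{**}).$$

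Next, the isometry in the statement should be the canonical one: as in the proof of \cite[Theorem 8.25]{P}, the post-composition inclusion $\decR(E,A) \hookrightarrow \decR(E,A^{**})$ induced by $A \hookrightarrow A^{**}$ extends uniquely to a weak*-continuous map $\Psi : \decR(E,A)^{**} \to \decR(E,A^{**})$ compatible with the canonical embedding of $\decR(E,A)$ into its bidual, and one must show $\Psi$ is an isometric isomorphism. Since $\decR(E,A)^{**}$ and $\decR(E,A^{**})$ embed completely isometrically into their complexifications (\cite{BReal}), which are the operator space biduals above, and since a real-linear map is isometric (resp.\ surjective) precisely when its complexification is, it suffices to show that $\Psi_c$ is an isometric isomorphism. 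But functoriality of complexification — it commutes with biduals, with the $\decR(-,-)$ construction by Theorem \ref{thm:RealIsComplex_OS}, and carries $A \hookrightarrow A^{**}$ to $A_c \hookrightarrow (A_c)^{**}$ — shows that, under the identifications of the previous paragraph, $\Psi_c$ satisfies exactly the characterizing properties of the canonical map $\decC(E_c,A_c)^{**} \to \decC(E_c,(A_c)^{**})$, hence equals it. By \cite[Theorem 8.25]{P}, applied to the complex $C^*$-algebra $A_c$ with $E_c = M_n(\bC)$ or $\ell^\infty_n(\bC)$, that complex map is an isometric isomorphism, and we are done.

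Alternatively — and this is what the previous Proposition was set up for — one simply runs the proof of \cite[Theorem 8.25]{P} over $\bR$ line by line. Its only non-formal step is the appeal to \cite[Lemma 8.24]{P} (a bounded net of decomposable maps into $A$ converging pointwise weak* has a pointwise-weak* limit that is decomposable into $A^{**}$, with norm control), and that is precisely the previous Proposition. The remaining ingredients — finite dimensionality of $E$, the identities $M_n(A^{**}) = M_n(A)^{**}$ and $\ell^\infty_n(A^{**}) = \ell^\infty_n(A)^{**}$, and the Kaplansky-type weak* density of the positive cone of $M_n(A)$, resp.\ $\ell^\infty_n(A)$, in that of its bidual (used to weak*-approximate cp maps into $A^{**}$ by cp maps into $A$) — all carry over to real $C^*$-algebras with no change, and in fact this is how one constructs the real map $\Psi$ above.

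The step I expect to take the most care is the identification of $\Psi_c$ with the canonical complex map: one must confirm that the isometry coming out of \cite[Theorem 8.25]{P} is genuinely the complexification of a real map, rather than merely an isometry between the two complexifications — exactly the subtlety flagged in the introduction — which is resolved by pinning the complex map down on the (weak*-dense image of the) subspace $\decC(E_c,A_c)$ via the inclusion $A_c \hookrightarrow (A_c)^{**}$. As in the complex theory one should also record that the $\ell^\infty_n$ case requires the same positive-cone density input as the $M_n$ case, with no new idea entering.
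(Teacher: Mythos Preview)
Your proposal is correct, and your second approach is exactly the paper's own argument: the paper simply remarks that the preceding Proposition (the real version of \cite[Lemma 8.24]{P}) is what is needed to run the proof of \cite[Theorem 8.25]{P} verbatim in the real case, without further elaboration. Your first (complexification) approach is a valid alternative route, more in line with the other proofs in the section; it trades the line-by-line verification of Pisier's argument for the bookkeeping of checking that $\Psi_c$ agrees with the canonical complex map, but neither approach introduces a genuinely new idea beyond what is already in \cite[Theorem 8.25]{P} and the complexification machinery.
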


\begin{example}  \label{P625} We will later mention the fact that for real linear $u : \ell^\infty_n(\bR) \to A$,  we have 
$$\| u \|_{\rm dec} = \inf \{\|  \sum_{k=1}^n \, a_k a_k^* \|^{\frac{1}{2}} \, \|  \sum_{k=1}^n \, b_k^* a_k \|^{\frac{1}{2}} : x_k = a_k b_k \; \text{for} \; a_k , b_k \in A \}.$$   
To see this by complexification 
note that $\| u \|_{\rm dec} = \| u_c \|_{\rm dec}$, which by  \cite[Lemma 6.25]{P} equals the same infimum expression here but over $a_k , b_k \in A_c$.  It is clear that 
this is dominated by the infimum with $a_k , b_k \in A$.  The converse inequality follows from the following considerations which we leave to the reader. Suppose that $$x_k = (a_k + i c_k)(b_k + i d_k) = 
a_k b_k - c_k d_k , 
\qquad a_k , b_k, c_k , d_k \in A.$$ Then 
 $$\| \sum_{k=1}^n \, a_k a_k^* + \sum_{k=1}^n \, c_k c_k^* 
\| \leq \| \sum_{k=1}^n \, (a_k + i c_k) (a_k + i c_k)^* \|,$$
and a similar inequality holds with $a_k$ and $c_k$ replaced by $b_k^*$ and $d_k^*$. 
\end{example} 

\smallskip

The relationship between real decomposability and complex decomposability is quite subtle.  
Clearly we cannot generally expect a 
real decomposable real linear map between complex systems to be complex decomposable.  A more subtle question, which we now proceed to answer, is if a
real decomposable complex map between complex operator systems is complex decomposable.
Certainly every complex decomposable is real  decomposable, with the same dec norm (exercise).  Moreover we have: 

\begin{lemma} \label{ncomma} Let $u : V \to B$ be a real decomposable map from a real operator system into a complex operator system   $B$.
Then $u$ extends uniquely to a complex linear decomposable  map $u' : V_c \to B$ with complex decomposable norm equal to 
$\| u \|_{\rm dec}.$
\end{lemma}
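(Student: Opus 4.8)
The plan is to route $u$ through the complexification $V_c$ using Theorem~\ref{res:RealInComplex}, and then collapse the target $B_c$ back onto $B$ via a canonical complex-linear completely positive projection. The one structural ingredient is that, because $B$ is \emph{already} a complex operator system, its operator system complexification splits off a copy of $B$: writing $\mu\colon B\to B$ for multiplication by the imaginary unit inside $B$, a routine diagonalization (conjugate $c(a,b)=\begin{bmatrix} a & -b \\ b & a\end{bmatrix}$ by the scalar unitary $\tfrac{1}{\sqrt2}\begin{bmatrix} 1 & i \\ i & 1\end{bmatrix}$, and likewise at every matrix level, obtaining $\operatorname{diag}(a+\mu(b),\,a-\mu(b))$) shows that $a+ib\in M_n(B_c)^+$ if and only if $a+\mu(b)\ge 0$ and $a-\mu(b)\ge 0$ in $M_n(B)$. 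Hence $\rho\colon B_c\to B$, $\rho(a+ib)=a+\mu(b)$ ($a,b\in B$), is unital, completely positive, complex linear for the complexification complex structure on $B_c$, and satisfies $\rho(a+i0)=a$, i.e.\ $\rho$ restricts to the identity on the canonical real-linear copy of $B$ inside $B_c$. (In other words $B_c$ splits off a summand isomorphic to $B$, and $\rho$ is the projection onto it.)

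With $\rho$ in hand I would set $u':=\rho\circ u_c\colon V_c\to B$. By Theorem~\ref{res:RealInComplex}, $u_c\in\decC(V_c,B_c)$ with $\|u_c\|_{\idec}=\|u\|_{\idec}$; since $\rho$ is ucp it is complex decomposable with $\|\rho\|_{\idec}=1$ by Proposition~\ref{prop:DecIfuCP}(1), so Proposition~\ref{prop:DecCompDec} gives $u'\in\decC(V_c,B)$ with $\|u'\|_{\idec}\le\|u\|_{\idec}$. The map $u'$ is complex linear, being a composite of complex-linear maps; and for $x\in V$ we have $u_c(x)=u(x)+i0$ in $B_c$, so $u'(x)=\rho(u(x)+i0)=u(x)$, i.e.\ $u'$ extends $u$. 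Uniqueness of the extension needs nothing further: any complex-linear $w\colon V_c\to B$ with $w|_V=u$ is forced to satisfy $w(x+iy)=u(x)+\mu(u(y))$ for $x,y\in V$, hence $w=u'$.

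Finally, for the reverse norm inequality $\|u\|_{\idec}\le\|u'\|_{\idec}$ I would write $u=u'\circ\iota$, where $\iota\colon V\to V_c$ is the canonical inclusion. Since $\iota$ is a unital complete order embedding it is real ucp, so $\|\iota\|_{\idec}=1$ by Proposition~\ref{prop:DecIfuCP}(1); and, $u'$ being complex decomposable, it is real decomposable with the same decomposable norm (the observation left as an exercise just before the lemma). The real case of Proposition~\ref{prop:DecCompDec} then gives $\|u\|_{\idec}\le\|u'\|_{\idec}\,\|\iota\|_{\idec}=\|u'\|_{\idec}$, and combined with the previous paragraph $\|u'\|_{\idec}=\|u\|_{\idec}$. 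The only genuinely delicate point in all of this is keeping the two complex structures apart — that of $B$ and that of $B_c=B\otimes_{\mathbb R}\mathbb C$ — and checking that $\rho$ is complex linear for the latter while acting as the identity on $B$; once $\rho$ is correctly identified, the rest is formal bookkeeping with the established behaviour of $u\mapsto u_c$ together with Propositions~\ref{prop:DecCompDec} and~\ref{prop:DecIfuCP}.
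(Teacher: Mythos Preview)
Your argument is correct. The splitting $B_c\cong B\oplus\bar B$ via the scalar unitary conjugation is sound (the precise order of the diagonal entries depends on which unitary one writes down, but both $a\pm\mu(b)$ appear, and you correctly select the complex-linear projection $\rho(a+ib)=a+\mu(b)$); the rest is, as you say, bookkeeping with Theorem~\ref{res:RealInComplex} and the composition rule.

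The paper's route is different and slightly more direct: rather than complexify the target and project back, it invokes \cite[Lemma 3.1]{BR} --- any real cp map from $V$ into a complex operator system extends uniquely to a complex-linear cp map on $V_c$ with the same norm --- applied at once to the $2\times2$ witness $\hat u:V\to M_2(B)$. The extension $\mu:V_c\to M_2(B)$ is then automatically cp, its corners are the unique complex-linear extensions $S_1',u',(u')^*,S_2'$, and the norm inequality $\|u'\|_{\idec}\le\max\{\|S_i'\|\}=\max\{\|S_i\|\}$ drops out. Your approach trades this external extension lemma for the explicit diagonalization of $B_c$, which is a nice self-contained alternative; it also makes visible the structural reason the lemma works (namely that $B$ is already a complex summand of $B_c$), a point the paper only surfaces in the proof of the next corollary, where the dual embedding $\nu:B\to B_c$, $x\mapsto x\otimes p$, plays the analogous role.
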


\begin{proof}    Let $\hat{u} : V \to M_2(B)$ be the usual completely positive map, with  $(S_1,S_2) \in \mathfrak{P}(u)$.  Then by \cite[Lemma 3.1]{BR}, 
$\hat{u}$ and $S_1, S_2$ 
extend uniquely to complex linear completely positive maps $\mu : V_c \to M_2(B)$ and $S_i' : V_c \to B$.  Also $u$
  extends uniquely to a complex linear  map $u' : V_c \to B$.  It is easy to see that the corners of $\mu$ are $S_i', u',$ and $(u')^*$.
  So $u'$ is decomposable, and $$\| u' \|_{\rm dec}
  \leq \max \{  \| S_1' \| , \| S_2' \| \}
  = \max \{  \| S_1 \| , \| S_2 \| \}.$$
  It follows that $\| u' \|_{\rm dec}
  \leq \| u \|_{\rm dec}.$  The converse follows since 
  the restriction of a decomposable map to a real subsystem is decomposable 
   (and also using  the exercise above the lemma), without increasing the Dec norm.
\end{proof} 

\begin{corollary} \label{or} A complex  linear map $u$ between complex operator systems is real decomposable if and only if it is complex decomposable, and in this case the real and complex decomposable norms of $u$ agree.
\end{corollary}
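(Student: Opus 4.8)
The plan is to deduce Corollary \ref{or} directly from Lemma \ref{ncomma}, so the proof is essentially a two-line argument with a uniqueness observation. Let $u : V \to W$ be a complex linear map between complex operator systems. The forward implication "complex decomposable $\Rightarrow$ real decomposable, with the same norm" is the exercise quoted just above Lemma \ref{ncomma}, so the only content is the converse.

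First I would record the elementary fact that any complex operator system $W$ is a real operator system, and that $V$ regarded as a real operator system has complexification $V_c$ with a canonical complex linear ucoi (really, a canonical copy of $V$ inside $V_c$ as the real points, together with the "multiplication by $i$" identification). The key point: the original complex structure on $V$ gives a period-$2$ real complete order automorphism, so $V$ sits inside $V_c$ and there is a complex linear ucp projection, or more simply, there is a complex linear ucoi from $V_c$ onto $V$ restricting to the identity on $V$ (this is the statement that a complex operator system is its own complexification up to the $\theta$-twist; see the discussion in the introduction and \cite[Lemma 3.1]{BR}).

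Now suppose $u$ is real decomposable as a map between the underlying real operator systems. Apply Lemma \ref{ncomma} with the codomain $B = W$ (which is a complex operator system): $u$ extends uniquely to a complex linear decomposable map $u' : V_c \to W$ with complex decomposable norm equal to $\| u \|_{\rm dec}$ (the real decomposable norm). But $u$ was already complex linear on $V$, and $V$ generates $V_c$ as a complex space, so the composition of $u'$ with the canonical complex linear embedding $V \hookrightarrow V_c$ is $u$ itself; equivalently, $u'$ restricted along $V \to V_c$ recovers $u$. Precompose $u'$ with the complex linear ucoi $V \to V_c$ that restricts to the identity on $V \subseteq V_c$: this gives a complex linear decomposable map on $V$ (decomposability is preserved by composition with ucp maps, Proposition \ref{prop:DecCompDec} / the restriction remark) which equals $u$, and whose complex decomposable norm is at most $\| u' \|_{\rm dec} = \| u \|_{\rm dec}$. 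Hence $u$ is complex decomposable with complex dec norm $\le$ real dec norm; combined with the forward exercise this gives equality.

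The one place needing slight care is making precise "the complex linear ucoi $V \to V_c$ restricting to the identity on $V$." Concretely, using the $\cR_V$ model, $V_c \cong \cR_V = \{c(x,y) : x,y \in V\} \subseteq M_2(V)$, and the compression $c(x,y) \mapsto x$ (i.e.\ the $(1,1)$-corner map) is a complex linear ucp idempotent onto a copy of $V$; under the identification it is exactly the "real part projection" $\theta$-twisted appropriately, and it restricts to the identity on the embedded copy $x \mapsto c(x,0)$. So the actual argument is: $u$ real decomposable $\Rightarrow$ $u' : V_c \to W$ complex decomposable by Lemma \ref{ncomma} $\Rightarrow$ $u' \circ \iota$ complex decomposable where $\iota : V \to V_c$ is $x \mapsto c(x,0)$ (a complex linear ucoe onto its range), and $u' \circ \iota = u$ since $u'$ extends $u$. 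I expect no real obstacle here — the substance was already done in Lemma \ref{ncomma}; this corollary is just unwinding the observation that when the domain is itself complex, the "unique complex linear extension to $V_c$" restricts back to the original map, so complex decomposability of the extension forces complex decomposability of $u$.
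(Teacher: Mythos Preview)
Your overall strategy matches the paper's: apply Lemma \ref{ncomma} to get a complex decomposable extension $u' : V_c \to W$, then precompose with a suitable complex linear ucp map $V \to V_c$ that recovers $u$. The gap is in your choice of that map. The canonical inclusion $\iota : V \to V_c$, $x \mapsto c(x,0)$ (equivalently $x \mapsto x + \iota \cdot 0$, writing $\iota$ for the new imaginary unit of $V_c$) is \emph{not} complex linear: complex linearity would require $\iota(ix) = \iota \cdot \iota(x)$, i.e.\ $ix = \iota x$ in $V_c$, which is false. The same problem afflicts your ``$(1,1)$-corner'' projection $c(x,y) \mapsto x$: that is the real-part map $\rho$, and $\rho(\iota z) \neq i\,\rho(z)$ in general. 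So neither of your proposed maps gives a complex linear factorization, and the argument as written does not establish complex decomposability.

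The paper fixes this by using the map $\nu : V \to V_c$, $\nu(x) = \tfrac{1}{2}(x - \iota(ix))$, which one checks directly is complex linear and satisfies $u' \circ \nu = u$ (here one uses that $u$ is complex linear on $V$, so $u(ix) = i\,u(x)$). In the $\cR_V$ picture this is $x \mapsto x \otimes p$ for a rank-one projection $p \in M_2(\bC)$, hence $\nu$ is cp and contractive, and the norm inequality follows. Your sketch is correct in spirit once you replace the naive inclusion by this $\nu$; the subtlety is precisely that the obvious real-linear embedding of $V$ into $V_c$ does not intertwine the two complex structures.
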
 

\begin{proof} Suppose that $u : V \to B$ is a real decomposable complex map between complex operator systems
with real decomposable norm $t$.
By the lemma $u$ extends uniquely to a complex linear decomposable  map $u' : V_c \to B$ with 
$\| u' \|_{\rm dec} = t.$
Define $\nu : V \to V_c$ by $\nu(x) = \frac{1}{2} (x - \iota (i x))$.  Here we have written $\iota$ for the  ``$i$'' for the complex space $V_c$, to distinguish it from multiplication by $i$ in  the complex space $V$. 
It is easy to check that $u' \circ \nu = u$, and that 
$\nu$ is complex linear.  In fact $\nu$ is a completely positive isometry.   To see this note that under the 
identification of $V_c$ with $\cR_V$ below (\ref{cis}), 
$\nu$ corresponds to $x \mapsto x \otimes p$
where $p \in M_2(\bC)$ is the orthogonal projection 
$c(\frac{1}{2}(1+i))$.  Thus $\nu$ is decomposable with
$\| \nu \| = 1$, and so $u$ is complex decomposable with
complex decomposable norm dominated by 
$\| \nu \| \| u' \|_{\rm dec} = t$.   The converse inequality was left as an exercise above Lemma \ref{ncomma}. 
\end{proof} 

For its own independent interest, we discuss the bimodule Paulsen system, a canonical operator system constructed from a given operator $A$-$B$-bimodule.   The fact at the end of the discussion below may be used to provide a 
direct proof of Theorem \ref{thm:ExtRDec} below following the proof of \cite[Theorem 6.20]{P}.

All algebras are real unital $C^*$-algebras, and all operator modules are assumed to be nondegenerate for simplicity.  
Let $X$ be a real operator $A$-$B$-bimodule over real unital $C^*$-algebras $A$ and $B$ (see the discussion around \cite[Theorem 2.4]{BReal}). As in \cite[Chapter 3.6]{BLM} and \cite[Theorem 2.4]{BReal} we have a real CES (Christensen-Effros-Sinclair) representation.  That is, we may consider $A$ and $B$ as real unital-subalgebras of $B(H)$ and $B(K)$ respectively, and $X$ as an $A$-$B$-submodule of $B(K,H)$. We define the real bimodule Paulsen system as $${\mathcal S}(X) := \left\{\begin{bmatrix}
    a& x \\ y^* & b
\end{bmatrix} : a \in A, b \in B, x , y \in X\right\}.$$
(See \cite{Pnbook} or \cite{BLM} for the complex case, and \cite{BReal} for the real.) We may consider $X^\star\subset B(K,H)$ to be a $B$-$A$-bimodule. We view ${\mathcal S}(X)$ as a real unital selfadjoint $(A \oplus B)$-$(A \oplus B)$-submodule of $B(H \oplus K).$ Moreover, this operator system turns out to be independent of the CES representation chosen. 
As stated in \cite{BReal}, almost all the results in \cite[Chapter 3.6]{BLM} regarding the complex bimodule Paulsen system  (see also 
\cite{Pnbook}, these results are mostly due to Paulsen and his students) carry over to the real case.
We mention a couple of these.  In particular, let $X$ be as above and suppose we are given a completely contractive $A$-$B$-bimodule map $u: X \to B(K,H)$.  That is, $u(axb) = \theta(a) u(x) \pi(b)$ for $a \in A, b \in B, x\in X.$  Here $\theta: A\to B(H)$ and $\pi: B \to B(K)$ are unital $*$-homomorphisms. Then the real case of  3.6.1 in \cite{BLM} and ideas in the proof of \cite[Theorem 3.6.2]{BLM} tells us
$$\Theta  : \begin{bmatrix}
a & x \\
y^*   & b
\end{bmatrix} \mapsto 
\begin{bmatrix}
\theta(a) & u(x) \\
u(y)^*   & \pi(b)
\end{bmatrix}$$
is ucp as a map from ${\mathcal S}(X)$ into $B(H \oplus K)$.
Conversely, if $\Theta : {\mathcal S}(X) \to B(H \oplus K)$ is ucp then $\| u \|_{\icb} \leq  \|\Theta\|_{\icb} = \|\Theta(1)\|  = 1$. Thus we have that $\|u\|_{\icb} \leq 1$ if and only if $\Theta $ is cp. Indeed, we have:

\begin{proposition} If $u: X \to B(K,H)$ is real linear, and $\theta, \pi, \Theta$ are as above, then 
$\|u\|_{\icb} \leq 1$  and $u$ is an $A$-$B$-bimodule map if and only if $\Theta $ is cp.  
\end{proposition}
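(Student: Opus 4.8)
The plan is to prove both implications, observing that the discussion immediately preceding the proposition already does most of the work in one direction, and that the other direction requires recovering the bimodule map property of $u$ from complete positivity of $\Theta$. I would state at the outset: one direction is precisely the real case of 3.6.1 of \cite{BLM} together with the proof of \cite[Theorem 3.6.2]{BLM}, as recalled just above the proposition. So assume $\|u\|_{\icb} \le 1$ and $u(axb) = \theta(a) u(x) \pi(b)$ for all $a \in A$, $b \in B$, $x \in X$; then $\Theta$ is cp. There is nothing left to do here beyond citing the paragraph preceding the statement.

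For the converse, suppose $\Theta : \mathcal{S}(X) \to B(H \oplus K)$ is cp. First I would note that $\Theta$ is automatically unital (it sends $1_{\mathcal{S}(X)} = 1_A \oplus 1_B$ to $\theta(1_A) \oplus \pi(1_B) = 1_H \oplus 1_K$ since $\theta, \pi$ are unital $*$-homomorphisms), so $\Theta$ is ucp, hence completely contractive, and restricting to the corner $X$ gives $\|u\|_{\icb} \le \|\Theta\|_{\icb} = \|\Theta(1)\| = 1$. The subtler point is the bimodule map identity. Here the standard trick is a multiplicative-domain / Schwarz-inequality argument: since $\Theta$ is ucp and fixes the diagonal copy of $A \oplus B$ on which it acts as the $*$-homomorphism $\theta \oplus \pi$, every element of $A \oplus B$ (embedded diagonally) lies in the multiplicative domain of $\Theta$. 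Concretely, one checks that for $d$ in the diagonal subalgebra, $\Theta(d s) = \Theta(d) \Theta(s)$ and $\Theta(s d) = \Theta(s) \Theta(d)$ for all $s \in \mathcal{S}(X)$, by the usual Choi-type argument (apply the Kadison--Schwarz inequality to $d$ and to $d^*$, using that $\Theta(d^*d) = \Theta(d)^*\Theta(d)$ because $\theta \oplus \pi$ is a $*$-homomorphism, and deduce equality in the Schwarz inequality forces multiplicativity on that side). Taking $d = a \oplus 0$ and $d' = 0 \oplus b$ and $s = \begin{bmatrix} 0 & x \\ 0 & 0 \end{bmatrix}$, reading off the $(1,2)$ corner of $\Theta(d s d')$ two ways yields exactly $u(axb) = \theta(a) u(x) \pi(b)$.

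The main obstacle I anticipate is making the multiplicative-domain argument genuinely valid in the \emph{real} operator system setting rather than just transcribing the complex proof; in particular one must be careful that the Kadison--Schwarz inequality $\Theta(d^* d) \ge \Theta(d)^* \Theta(d)$ for ucp maps, and the resulting characterization of the multiplicative domain, hold for ucp maps between real operator systems and real $C^*$-algebras. This is where the paper's standing advice about not "forgetfully assuming a real map is complex linear" is relevant. The cleanest route is probably to avoid re-deriving anything and instead complexify: the complexification $\Theta_c : \mathcal{S}(X)_c \to B(H\oplus K)_c$ is ucp between complex operator systems, $\mathcal{S}(X)_c \cong \mathcal{S}(X_c)$ (the complexification of a real bimodule Paulsen system is the complex bimodule Paulsen system of the complexified bimodule, which should follow from functoriality of complexification applied to the CES representation), and $\theta_c, \pi_c$ are unital complex $*$-homomorphisms, so the complex case of the statement (the cited results from \cite{BLM}, \cite{Pnbook}) gives that $u_c$ is a complex $A_c$--$B_c$-bimodule map with $\|u_c\|_{\icb} \le 1$; restricting back to $X$ and to $A, B$ recovers the real bimodule identity and $\|u\|_{\icb} = \|u_c\|_{\icb} \le 1$. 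I would present the complexification argument as the main proof and mention the direct multiplicative-domain argument only as an alternative. In either presentation the proof is short; the real content is bookkeeping that $\mathcal{S}(X)_c = \mathcal{S}(X_c)$ compatibly with $\theta, \pi, u$, which the excerpt has already essentially asserted when it says the real bimodule Paulsen system results "carry over to the real case."
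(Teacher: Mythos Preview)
Your proposal is correct, and your ``alternative'' multiplicative-domain argument is in fact the route the paper takes. The paper's proof is entirely the converse direction: it observes that for diagonal $c = a \oplus b$ one has $\Theta(c^*c) = \Theta(c)^*\Theta(c)$ (because $\theta \oplus \pi$ is a $*$-homomorphism), extends $\Theta$ to a ucp map on the generated $C^*$-algebra $C^*(\mathcal{S}(X))$, and then applies Choi's multiplicative-domain result (explicitly cited as the real case of \cite[Proposition 1.3.11]{BLM}) to get $\Theta(czc) = \Theta(c)\Theta(z)\Theta(c)$; reading off the $(1,2)$ corner gives the bimodule identity. This is exactly your Kadison--Schwarz/multiplicative-domain sketch, with the one step you glossed over made explicit (passing to $C^*(\mathcal{S}(X))$ so that products and the multiplicative domain make sense).

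Your preferred complexification route is a genuine alternative and works as you describe, but the paper deliberately avoids it here: having already asserted that the relevant \cite{BLM} results (including the real Choi multiplicative-domain lemma) carry over to the real case, the direct argument is shorter and does not require verifying the compatibility $\mathcal{S}(X)_c \cong \mathcal{S}(X_c)$ together with the corner maps. So your worry about ``forgetfully assuming complex linearity'' is handled not by complexifying but by simply citing the real version of Choi's result as already available.
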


\begin{proof}  Indeed if $c = a \oplus b \in {\mathcal S}(X)$ then $$\Theta(c^* c) = \Theta \left( \begin{bmatrix}
a^*a & 0 \\
0   & b^* b 
\end{bmatrix} \right) = 
\begin{bmatrix}
\theta(a^* a) & 0 \\
0   & \pi(b^* b)
\end{bmatrix} = \Theta(c)^* \, \Theta(c).$$
Thus if we extend $\Theta$ to a ucp
map $C^*({\mathcal S}(X)) \to B(H \oplus K)$ it follows by Choi's multiplicative domain result
(the real case of \cite[Proposition 1.3.11]{BLM})
that 
$\Theta(c z c) = \Theta(c) \Theta(z) \Theta(c),$ where $z$ is the $2 \times 2$ matrix with zero entries except $x \in X$ in the $1$-$2$-corner.
It follows that $u$ is an $A$-$B$-bimodule map (this is a standard kind of argument). \end{proof}

\section{The real universal C*-algebra
and the \texorpdfstring{$\delta$}{delta}-norm} \label{sec:deltaNorm}

Pisier defines for a complex operator space $E$
the  universal $C^*$-algebra $C^*_{\bC} \langle E \rangle$ to be a $C^*$-algebra generated by a completely isometric copy of $E$, having the expected universal property
that any linear complete contraction $v : E
\to B(H)$ extends uniquely to a $*$-homomorphism $C^*_{\bC} \langle E \rangle \to B(H)$ \cite{Pisbk}.  (He also defines a unital version but we shall not say more on this here.)  As usual, this universal property defines $C^*_{\bC} \langle E \rangle \to B(H)$ uniquely up to 
$*$-isomorphism.  Exactly the same simple construction produces the universal real $C^*$-algebra $C^*_{\bR} \langle E \rangle$ of
a real operator space having the expected universal property.  Indeed one may define 
$C^*_{\bR} \langle E \rangle$ to be the 
real $C^*$-algebra $B$ generated by the canonical copy of $E$  in $C^*_{\bC} \langle E_c \rangle$.
We show that $B$ has the desired universal property.  Given a real linear complete contraction $v : E
\to B(H)$, the complex universal property
gives a complex 
$*$-homomorphism $\pi : C^*_{\bC} \langle E_c \rangle \to \bB$ extending $v_c$, where $\bB = B(H_c) \cong B(H)_c$.  Then 
$\pi(B)$ is contained in the copy of $B(H)$ inside $\bB$, and 
$\rho \circ \pi_{|B}$ does the trick,
where $\rho : B(H)_c \to B(H)$ is the canonical map.

\begin{lemma} \label{ucom} For a real operator space $E$ we have
   $C^*_{\bC} \langle E_c \rangle = (C^*_{\bR} \langle E_c \rangle)_c$.
\end{lemma}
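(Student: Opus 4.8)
The statement to prove is $C^*_{\bC} \langle E_c \rangle = (C^*_{\bR} \langle E_c \rangle)_c$ for a real operator space $E$. Note the subtlety: $E_c$ is a \emph{complex} operator space, so $C^*_{\bR}\langle E_c\rangle$ means the universal real $C^*$-algebra of $E_c$ regarded merely as a real operator space (forgetting its complex structure). The plan is to show both sides satisfy the same universal property, or rather to exhibit an explicit $*$-isomorphism using the construction of $C^*_{\bR}\langle\,\cdot\,\rangle$ given just above the lemma.

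\smallskip

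\emph{Step 1: identify a canonical copy of $E_c$ on both sides.} By the construction immediately preceding the lemma (applied to the real operator space $F := E_c$), the real $C^*$-algebra $C^*_{\bR}\langle E_c\rangle$ is defined as the real $C^*$-subalgebra $B$ generated by the canonical completely isometric copy of $E_c$ inside $C^*_{\bC}\langle (E_c)_c\rangle$. So there is a completely isometric real-linear embedding $j : E_c \to B$. Complexifying, we get $j_c : (E_c)_c \to B_c = (C^*_{\bR}\langle E_c\rangle)_c$, which is completely isometric by functoriality of complexification \cite{BReal}, and whose range generates $B_c$ as a complex $C^*$-algebra (since the range of $j$ generates $B$ and complexification preserves the generated algebra).

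\smallskip

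\emph{Step 2: restrict along the canonical section $E_c \hookrightarrow (E_c)_c$.} Just as in the proof of Corollary~\ref{or}, there is a completely positive complete isometry $\nu : E_c \to (E_c)_c$, $\nu(z) = \frac{1}{2}(z - \iota(iz))$, which is complex linear (here $\iota$ is multiplication by $i$ on $(E_c)_c$, distinguished from the $i$ internal to $E_c$), and it is a completely contractive complex-linear right inverse to the real part map. Composing, $k := j_c \circ \nu : E_c \to B_c = (C^*_{\bR}\langle E_c\rangle)_c$ is a complex-linear complete isometry into a complex $C^*$-algebra. I would then check that the image of $k$ generates $B_c$ as a complex $C^*$-algebra: the image of $j_c$ does, and the image of $j_c$ is spanned (over $\bC$) by the image of $k$ together with its conjugate under the canonical period-$2$ automorphism $\theta$, which already lies in the complex algebra generated by $\operatorname{ran} k$ — here one uses that $\theta$ fixes $B$ pointwise, hence $\theta(j_c(z + \iota w)) = j_c(z - \iota w)$ for $z, w \in E_c$, so both $j_c(z)$ and $\iota\, j_c(w)$ are recovered. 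Thus $(C^*_{\bR}\langle E_c\rangle)_c$ together with $k$ is a complex $C^*$-algebra generated by a completely isometric complex-linear copy of $E_c$.

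\smallskip

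\emph{Step 3: verify the universal property and conclude.} It remains to check that $(B_c, k)$ has the universal property defining $C^*_{\bC}\langle E_c\rangle$: given any complex-linear complete contraction $v : E_c \to B(H)$ with $H$ complex, it extends uniquely to a complex $*$-homomorphism $B_c \to B(H)$. For existence, view $v$ also as a real-linear complete contraction $E_c \to B(H)_{\bR}$; the real universal property of $C^*_{\bR}\langle E_c\rangle = B$ gives a real $*$-homomorphism $B \to B(H)_{\bR}$, whose complexification $B_c \to (B(H)_{\bR})_c = B(H_c)$ composed with the canonical projection is a candidate — but one must track that $k$, not $j_c$, recovers $v$, which follows from $v \circ (\text{real part map}) $ having the right restriction together with $\nu$ being a section; alternatively, and more cleanly, one extends $v_c : (E_c)_c \to B(H_c)$ via the \emph{complex} universal property of $C^*_{\bC}\langle(E_c)_c\rangle \supseteq B$, restricts to $B$, and precomposes with $\nu$. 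Uniqueness of the extension is automatic since $\operatorname{ran} k$ generates $B_c$. By uniqueness of the universal $C^*$-algebra up to $*$-isomorphism, $(C^*_{\bR}\langle E_c\rangle)_c \cong C^*_{\bC}\langle E_c\rangle$, completing the proof.

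\smallskip

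\emph{Main obstacle.} The delicate point is Step 3, specifically bookkeeping the two distinct ``$i$''s — the complex structure internal to $E_c$ versus the one introduced by the outer complexification $(\,\cdot\,)_c$ — and making sure that the extension one builds actually agrees with $v$ along $k = j_c \circ \nu$ rather than merely along $j_c$. This is exactly the kind of ``forgetfully assuming a real map is complex linear'' trap flagged in the introduction, so the safest route is to route everything through $\nu$ and the projection $\rho$ as in Corollary~\ref{or}, and to note that $\rho \circ \nu = \mathrm{id}_{E_c}$ as complex-linear maps.
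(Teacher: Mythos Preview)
Your proposal rests on a misreading of the statement. You interpret $C^*_{\bR}\langle E_c\rangle$ as the universal real $C^*$-algebra of $E_c$ \emph{viewed as a real operator space} (forgetting the complex structure), and then work inside $C^*_{\bC}\langle (E_c)_c\rangle$. But the paper's own proof and its only application (Theorem~\ref{thm:delta=max-real}) make clear that the intended object is $C^*_{\bR}\langle E\rangle$, the real $C^*$-subalgebra $B$ generated by the copy of $E$ inside $C^*_{\bC}\langle E_c\rangle$ constructed in the paragraph immediately preceding the lemma; the ``$E_c$'' in the lemma is evidently a typo for ``$E$''. With the intended reading, the paper's argument is quite different from yours: one extends the period-2 conjugate-linear automorphism $x+iy\mapsto x-iy$ of $E_c$ to a conjugate-linear $*$-automorphism $\theta$ of $C^*_{\bC}\langle E_c\rangle$ via the universal property, observes that $B$ lies in the fixed-point set of $\theta$, and then shows directly that $B\cap iB=(0)$ and $B+iB=C^*_{\bC}\langle E_c\rangle$ (the latter by a closure argument using $\theta$). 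No section map $\nu$ or double complexification $(E_c)_c$ is needed.

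Under your literal interpretation the statement is actually \emph{false}, and your proof breaks down in Step~2. Applying the (corrected) lemma to the real operator space $F:=E_c$ gives $(C^*_{\bR}\langle E_c\rangle)_c\cong C^*_{\bC}\langle (E_c)_c\rangle$, and for a complex operator space $X$ one has $(X_r)_c\cong X\oplus \overline{X}$, so the right-hand side is $C^*_{\bC}\langle E_c\oplus\overline{E_c}\rangle$, strictly larger than $C^*_{\bC}\langle E_c\rangle$ in general. Concretely, your generation claim fails: the image of $k=j_c\circ\nu$ picks out only the ``holomorphic'' summand, and your sentence ``which already lies in the complex algebra generated by $\operatorname{ran} k$'' is exactly where the argument collapses---$\theta(\operatorname{ran} k)$ corresponds to the $\overline{E_c}$ piece and there is no reason it should lie in the $C^*$-algebra generated by $\operatorname{ran} k$. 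The computation $k(iz)=ik(z)$ (since $k$ is complex-linear) shows you cannot recover $j(z)$ and $j(iz)$ separately from $\operatorname{ran} k$ alone. So both the target statement and the key step are off; once you replace $E_c$ by $E$ throughout and work inside $C^*_{\bC}\langle E_c\rangle$ with the $\theta$-automorphism, the paper's short argument goes through cleanly.
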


\begin{proof}  We have already proved that $C^*_{\bR} \langle E_c \rangle$ is $*$-isomorphic to $B$,  the $C^*$-subalgebra of $C^*_{\bC} \langle E_c \rangle$
above.  The canonical period 2 unital completely isometric conjugate linear map $E_c \to E_c$ extends to a period 2 conjugate linear $*$-automorphism $\theta$ of $C^*_{\bC} \langle E_c \rangle$ that fixes $B$.  This follows by an almost identical argument for the analogous fact in the proof of \cite[Lemma 5.1]{BR}.  As in that result  we 
 are  done, since $B \cap iB = (0)$ and $B + iB = 
C^*_{\bC} \langle E_c \rangle$.  For completeness, we repeat the argument for the last two equalities.
Let $D$ be the fixed points of $\theta$, and $E$ the points with $\theta(x) = -x$.  These are closed with 
 $D \cap E = (0)$ and $D + E = C^*_{\bC} \langle E_c \rangle$. 
 Also $B \subseteq D$ and $iB \subseteq E$.  If $b_n, c_n \in B$ with 
 $b_n + ic_n \to x$.  By applying $\theta$ we see that 
 $b_n - ic_n \to \theta(x)$.  It follows that $x + \theta(x) \in B$ and $x - \theta(x) \in iB$, so that $x \in B + iB$.  Thus $B + iB$ is closed, hence is a complex C$^*$-subalgebra of
 $C^*_{\bC} \langle E_c \rangle$ containing 
 $E_c$.    Thus $B \oplus iB = 
C^*_{\bC} \langle E_c \rangle$.
\end{proof}

\begin{definition} \label{def:DeltaMap}
Given real operator spaces $E,F$ and linear maps $\theta: E \to B(H)$ and $\pi: F \to B(H),$ we define the map $\theta \cdot \pi: E \otimes F \to B(H)$ by $$\theta \cdot \pi\ \left(\sum x_j \otimes y_j \right) = \sum \theta(x_j) \, \pi(y_j),\ x_j\in E\ y_j\in F .$$
Let $A$ be a real $C^*$-algebra and 
$z \in E \otimes A$. We define  
$$\Delta(z) = \sup\{\| (\theta\cdot\pi) (z)\|_{B(H)} \},$$
where the supremum runs over all $(\theta, \pi, H)$ with $H$ is a real Hilbert space, $\pi: A \to B(H)$  a $*$-homomorphism, and $\theta : E \to \pi(A)^{\prime}$ a complete contraction.
\end{definition}

\begin{theorem}\label{thm:delta=max-real}
Let $E$ be a real operator space, and let $j:\ E \to C_r^*\langle E\rangle$ be the canonical completely isometric embedding into the real universal $C^*$-algebra $C_r^*\langle E\rangle.$ For any real $C^*$-algebra $A$ we have 
$$
\Delta(z)\ =\ \big\|(j\otimes I_A)(z)\|_{\,C_r^*\!\langle E\rangle\otimes_{\max} A},\ \qquad z \in E\otimes A.
$$
\end{theorem}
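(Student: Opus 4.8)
The plan is to prove this by reducing to the complex case (Pisier's result, e.g.\ \cite[Chapter 6]{P} or \cite{Pisbk}) via complexification, exactly in the spirit of the rest of the paper. First I would observe that by the definition of the max-tensor norm and the universal property of $C^*_{\bR}\langle E\rangle$, the right-hand side $\|(j\otimes I_A)(z)\|_{C^*_{\bR}\langle E\rangle \otimes_{\max} A}$ equals $\sup \|(\rho\cdot\pi)(z)\|$, where $\rho : C^*_{\bR}\langle E\rangle \to B(H)$ and $\pi : A \to B(H)$ are $*$-homomorphisms with commuting ranges and $H$ a real Hilbert space. Restricting $\rho$ along $j$ gives a complete contraction $\theta = \rho\circ j : E \to B(H)$ with $\theta(E)\subseteq \pi(A)'$, and since $\rho$ is determined by $\theta$ via the universal property, one direction $\Delta(z)\ge \|(j\otimes I_A)(z)\|_{\max}$ is immediate; conversely, given any $(\theta,\pi,H)$ as in Definition \ref{def:DeltaMap}, the universal property extends $\theta$ to a $*$-homomorphism $\rho : C^*_{\bR}\langle E\rangle \to B(H)$, and one must check $\rho(C^*_{\bR}\langle E\rangle) \subseteq \pi(A)'$ — this holds because $\rho(C^*_{\bR}\langle E\rangle)$ is the $C^*$-algebra generated by $\theta(E)$, which commutes with $\pi(A)$. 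Hence the two suprema coincide and the theorem is essentially a direct unwinding of definitions. So in fact the cleanest route may not even need complexification.

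Nonetheless, following the paper's consistent policy I would also give the complexification argument, which is instructive. By Lemma \ref{ucom} we have $C^*_{\bC}\langle E_c\rangle = (C^*_{\bR}\langle E\rangle)_c$, and we know $(C^*_{\bR}\langle E\rangle \otimes_{\max} A)_c \cong C^*_{\bC}\langle E_c\rangle \otimes_{\max} A_c$ (the max tensor product commutes with complexification for real $C^*$-algebras — a standard fact; one can also see this from $(B\otimes_{\max} C)_c$ being a $C^*$-algebra with the right universal property). Thus $\|(j\otimes I_A)(z)\|_{C^*_{\bR}\langle E\rangle\otimes_{\max} A}$ is unchanged when computed inside the complexification, so by Pisier's complex theorem it equals $\Delta_{\bC}(z)$, the complex $\Delta$ computed with $(\theta,\pi,H)$ ranging over complex data. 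It then remains to show $\Delta_{\bR}(z) = \Delta_{\bC}(z)$ for $z \in E\otimes A$: the inequality $\Delta_{\bR}(z)\le \Delta_{\bC}(z)$ is clear since every real triple $(\theta,\pi,H)$ extends by complexification to a complex triple $(\theta_c,\pi_c,H_c)$ with $\theta_c(E_c)\subseteq \pi_c(A_c)'$ and $\|(\theta_c\cdot\pi_c)(z)\| = \|(\theta\cdot\pi)(z)\|$ (as $z$ has real coordinates, and $B(H)$ embeds isometrically in $B(H_c)$); for the reverse, given a complex triple one restricts $\pi$ to $A$, restricts $\theta$ to $E$, and applies the `real part projection' $\rho : B(H)_{\text{as real}}$ — or more simply uses that $\|(\theta\cdot\pi)(z)\|_{B(H)}$ for complex $H$ viewed as a real Hilbert space is the same norm, while $\pi|_A$ is a real $*$-homomorphism and $\theta|_E$ a real complete contraction into its commutant.

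The main obstacle is the claim $(B\otimes_{\max} A)_c \cong B_c \otimes_{\max} A_c$ for real $C^*$-algebras $B,A$: while believable and surely known, it needs either a citation or a short argument (e.g.\ verifying the universal mapping property of the maximal $C^*$-tensor norm is preserved, using that $*$-representations of $B_c$ on a complex Hilbert space correspond to $*$-representations of $B$, and likewise for pairs with commuting ranges). A clean alternative that sidesteps this entirely is the first paragraph's direct argument: both sides of the claimed identity are, essentially by definition, a supremum of $\|(\theta\cdot\pi)(z)\|$ over the same set of data, once one matches up $*$-homomorphisms out of $C^*_{\bR}\langle E\rangle$ with complete contractions out of $E$ via the universal property — so I would lead with that and relegate the complexification discussion to a remark. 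In either case the only genuine subtlety to watch is not to forget real-linearity: the universal property of $C^*_{\bR}\langle E\rangle$ must be invoked for \emph{real} linear complete contractions, which is exactly how it was set up in the discussion preceding Lemma \ref{ucom}.
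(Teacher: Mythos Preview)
Your proposal is correct and in fact contains two proofs. Your complexification argument (second paragraph) is essentially the paper's proof: the paper shows $\Delta(z) = \Delta_{\bC}(z)$ by the same complexify/restrict maneuvers you describe, and then identifies $\|z\|_{C^*_{\bR}\langle E\rangle \otimes_{\max} A}$ with $\|z\|_{C^*_{\bC}\langle E_c\rangle \otimes_{\max} A_c}$ by citing \cite[Section 10.2]{BR} (which handles the ``main obstacle'' you flag), whereupon Pisier's complex result finishes. One minor note: your inequality $\Delta_{\bR}(z) \le \Delta_{\bC}(z)$ from complexifying real triples is the natural direction; the paper writes this inequality the other way around, which looks like a typo, but both directions are established so the conclusion is unaffected.

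Your direct argument (first paragraph) is a genuinely different route that the paper does not take. It is simply the real transcription of Pisier's own proof of the complex statement: match $*$-homomorphisms out of $C^*_{\bR}\langle E\rangle$ with complete contractions out of $E$ via the universal property, and observe that the commutant condition is preserved because $\pi(A)'$ is a $C^*$-algebra. This is cleaner and requires no complexification machinery at all. The paper deliberately opts for the complexification proof, in line with its stated policy of providing such proofs ``where these are available,'' partly to illustrate the method and partly to avoid any hidden complex-linearity slips; your direct argument buys brevity and self-containment, at the cost of not exhibiting the link to the complex theory that the surrounding results rely on.
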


\begin{proof} 
For a real Hilbert space $H$, we have ${B}_{\mathbb{R}}({H})_{c} \cong {B}_{{\mathbb{C}}}({H}_c),$ and let us denote this map by $\eta.$ Suppose we are given $(\theta, \pi, H)$ as in Definition \ref{def:DeltaMap}. The complexification of $(\theta \cdot \pi)$ (as defined in Definition \ref{def:DeltaMap}) may be identified with  $(\eta \circ\theta_c) \cdot (\eta \circ \pi_c): E_c \otimes A_c \to B(H_c).$  Also
$(\eta \circ \theta_c, \eta \circ
\pi_c, H_c)$ satisfies the conditions in the last lines of the original (complex) definition of the $\Delta$ norm, the complex version of Definition \ref{def:DeltaMap} for
$\Delta_{\bC}$ on $E_c \otimes A_c$.  We have
$\| ((\eta \circ\theta_c) \cdot (\eta \circ \pi_c))(z) \| = \| (\theta \cdot \pi)(z) \|$ for $z \in E\otimes A$.
Therefore we have $\Delta(z) \geq \Delta_{\bC}(z)$. 

For the converse inequality,  we have that every complex Hilbert space $H$ is a real Hilbert space $H_r$ in the natural way. Suppose that we are  given $(\theta, \pi, H),$  with complex Hilbert space $H$,  complex $*$-homomorphism $\pi: A_c \to B(H)$ and complex complete contraction $\theta: E_c \to \pi(A_c)^\prime$.
Let $\pi' = \pi_{|A}$ and $\theta' = \theta_{|A}$, regarded as  real map
into $B_{\bR}(H_r)$.  One finds that $(\theta',\pi', H_r)$ obeys the prescription in Definition \ref{def:DeltaMap}, and 
 $\theta' \cdot \pi': E \otimes A \to B(H_r)$. 
  We have
$\| (\theta \cdot \pi)(z) \| = \| (\theta' \cdot \pi')(z) \|$ for $z \in E\otimes A$.  Hence $\Delta_{\bC}(z) \geq \Delta(z),$ and so $\Delta_{\bC}(z) = \Delta(z).$ 

 By e.g.\ \cite[Section 10.2]{BR} we have $\|z \|_{C_{\bR}^{*}\langle E \rangle \otimes_{\max} A} = \|z\|_{C^{*}_{\bC} \langle E_c \rangle \otimes_{\max} A_c}.$ 
 Therefore by the corresponding complex result \cite[Lemma 6.14]{P} we obtain $$\|z\|_{C_r^{*}\langle E \rangle \otimes_{\max} A} = \|z\|_{C^{*}\langle E_c \rangle \otimes_{\max} A_c} = \Delta_{\bC}(z) = \Delta(z)$$
 for $z \in E\otimes A$ as desired. 
\end{proof}

Next we investigate the real version of Pisier's $\delta$-norm in the complex case.   We write the latter, the original complex version, on $E_c \otimes A_c$, as $\delta_{\bC}$.

\begin{definition} \label{delta}
Let $A$ be a real unital $C^*$-algebra and $E$ a real operator space. For $z \in E\otimes A$ we let $$\delta(z)=\inf \left\{\|x\|_{M_n(E)}\left\|\sum a_i a_i^*\right\|^{1 / 2}\left\|\sum b_j^* b_j\right\|^{1 / 2}\right\}$$
where the infimum runs over all possible $n$ and all possible representations of $z$ of the form
$$
z=\sum_{i, j=1}^n x_{i j} \otimes a_i b_j,\ x= [x_{ij}]\in M_n(E), a_i, b_i \in A.
$$  
\end{definition}
 
\begin{theorem}\label{thm:DeltaVSdelta}
For $A$ a real unital $C^*$-algebra, and $E$ a real operator space, we have $\Delta(z)=\delta(z)$ for $z \in E\otimes A$. 
\end{theorem}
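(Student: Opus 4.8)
The plan is to reduce to the corresponding complex statement of Pisier, namely $\Delta_{\bC}(z) = \delta_{\bC}(z)$ for $z \in E_c \otimes A_c$ (see \cite[around Lemma 6.14 / 6.25]{P}), using the complexification identities we have already established, in particular Theorem \ref{thm:delta=max-real} which says $\Delta(z) = \|(j \otimes I_A)(z)\|_{C^*_r\langle E\rangle \otimes_{\max} A}$, together with the complexification identity $C^*_{\bC}\langle E_c\rangle = (C^*_{\bR}\langle E\rangle)_c$ from Lemma \ref{ucom} and the tensor-norm complexification fact $\|z\|_{C^*_{\bR}\langle E\rangle \otimes_{\max} A} = \|z\|_{C^*_{\bC}\langle E_c\rangle \otimes_{\max} A_c}$ for $z \in E \otimes A$ cited from \cite[Section 10.2]{BR}. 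Combining these with $\Delta_{\bC} = \delta_{\bC}$ gives $\Delta(z) = \delta_{\bC}(z)$, so it only remains to compare $\delta(z)$ and $\delta_{\bC}(z)$ on the real tensor product $E \otimes A$.

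First I would establish the easy inequality $\delta(z) \le \Delta(z)$ directly from the definitions (this is the real analogue of the corresponding step in Pisier's proof): given a representation $z = \sum_{ij} x_{ij} \otimes a_i b_j$ with $x = [x_{ij}] \in M_n(E)$ and $a_i, b_j \in A$, and given any admissible triple $(\theta, \pi, H)$ with $\pi : A \to B(H)$ a $*$-homomorphism and $\theta : E \to \pi(A)'$ a complete contraction, one factors $(\theta\cdot\pi)(z)$ as a product of a row, the matrix $\theta^{(n)}(x)$, and a column, using that $\pi(a_i)$ and $\theta(x_{ij})$ commute; estimating the row norm by $\|\sum a_i a_i^*\|^{1/2}$, the column norm by $\|\sum b_j^* b_j\|^{1/2}$, and $\|\theta^{(n)}(x)\| \le \|x\|_{M_n(E)}$ yields $\|(\theta\cdot\pi)(z)\| \le \|x\|_{M_n(E)} \|\sum a_i a_i^*\|^{1/2}\|\sum b_j^* b_j\|^{1/2}$; taking sup over $(\theta,\pi,H)$ and then inf over representations gives $\Delta(z) \le \delta(z)$ --- wait, this gives $\Delta(z) \le \delta(z)$ (I have the inequality backwards in the sketch above); in any case this half is a routine bookkeeping argument identical to the complex one.

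For the reverse inequality $\Delta(z) \ge \delta(z)$, I would use $\Delta(z) = \delta_{\bC}(z)$ (already obtained) and show $\delta_{\bC}(z) \ge \delta(z)$ for $z \in E \otimes A$. A representation of $z$ over $A_c$ has the form $z = \sum_{ij} x_{ij} \otimes (a_i + i c_i)(b_j + i d_j)$ with $x \in M_n(E_c)$; the obstacle is that $x$ has complex matrix entries and $a_i + ic_i, b_j + id_j$ are genuinely complex. The trick, exactly as in Example \ref{P625}, is to ``double'' the index set: write each complex factor via its real and imaginary parts and, using the completely isometric embedding $M_n(E_c) = M_n(E)_c \hookrightarrow M_{2n}(E)$ coming from $c(\cdot)$, replace the $n$-term complex representation by a $2n$-term real one. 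One then needs the two norm estimates $\|\sum_k a_k a_k^* + \sum_k c_k c_k^*\| \le \|\sum_k (a_k + ic_k)(a_k + ic_k)^*\|$ (valid since $c(\cdot) : A_c \to M_2(A)$ is a faithful $*$-homomorphism and the left side is a compression of the image of the right side), and the analogous inequality with $a_k, c_k$ replaced by $b_k^*, d_k^*$, together with $\|x\|_{M_{2n}(E)} = \|x\|_{M_n(E_c)}$ under the complexification identification. Writing $z$ as $\sum_{\text{doubled indices}} x'_{ij}\otimes \alpha_i\beta_j$ with real $\alpha_i, \beta_j \in A$ and tracking these estimates shows that the $\delta$-cost of the real representation does not exceed the $\delta_{\bC}$-cost of the complex one, hence $\delta(z) \le \delta_{\bC}(z) = \Delta(z)$.

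\textbf{Main obstacle.} The genuinely nontrivial step is the reverse inequality: the index-doubling argument converting a complex representation of $z$ into a real one while controlling the three factor norms simultaneously. The $\|x\|_{M_n(E)}$ factor behaves well under $c(\cdot)$ since the complexification $M_n(E_c) = M_{2n}^{\text{corner}}(E)$ is completely isometric, but one must be careful that the shuffled/doubled $x'$ really is the matrix attached to the new representation and that its $M_{2n}(E)$-norm is exactly $\|x\|_{M_n(E_c)}$; and the row/column norm inequalities must be checked via the faithful $*$-homomorphism $c : A_c \to M_2(A)$ (and its amplifications) so that $\|\sum \alpha_k\alpha_k^*\|_A$ dominates the corresponding complex row norm. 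Everything else is either the already-proved complex theorem or a direct definition-chase.
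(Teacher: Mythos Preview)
Your argument is correct but handles the nontrivial inequality $\delta(z)\le\Delta(z)$ differently from the paper. Both you and the paper first obtain $\Delta(z)=\Delta_{\bC}(z)=\delta_{\bC}(z)$ via Theorem~\ref{thm:delta=max-real} and \cite[Theorem~6.15]{P}, and the inequality $\delta(z)\ge\delta_{\bC}(z)$ (infimum over a larger class) then gives $\Delta(z)\le\delta(z)$. For the remaining direction the paper simply writes ``as in the proof of \cite[Theorem~6.15]{P}'' (the printed inequality in that ``Conversely'' sentence is evidently a typo for the reverse), meaning one reruns Pisier's max-tensor-norm extraction argument verbatim in the real category, with Theorem~\ref{thm:delta=max-real} standing in for Pisier's Lemma~6.14. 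You instead prove $\delta(z)\le\delta_{\bC}(z)$ directly by the index-doubling device of Example~\ref{P625}: a complex $\delta_{\bC}$-representation on $n$ indices is converted to a real one on $2n$ indices, and the three factor norms are controlled via the compression estimate $\|\sum a_ka_k^*+c_kc_k^*\|\le\|\sum(a_k+ic_k)(a_k+ic_k)^*\|$ together with the isometry of $c:M_n(E_c)\to M_{2n}(E)$. Your route is self-contained and stays entirely within the complexification paradigm the paper advocates elsewhere; the paper's deferral to Pisier is shorter on the page but asks the reader to check that Pisier's argument transfers. One small correction to your sketch: the doubled coefficient matrix $x'$ you actually need is not $c(x)$ but $\mathrm{diag}(I_n,-I_n)\,c(x)$; since this differs by a real unitary, $\|x'\|_{M_{2n}(E)}=\|x\|_{M_n(E_c)}$ still holds and your conclusion is unaffected.
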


\begin{proof}
Consider $z \in E\otimes A$. Considering $z$ as a element of $E_c\otimes A_c,$ we have $\delta(z) \geq \delta_{\bR}(z),$ since an infimum over a bigger set, is smaller.  Hence  $\delta(z)\geq \delta_{\bC}(z) = \Delta_{\bC}(z) = \Delta(z),$ via \cite[Theorem 6.15]{P} and 
Theorem \ref{thm:delta=max-real}. Conversely $\Delta(z) \leq \delta(z)$ as in the proof of \cite[Theorem 6.15]{P}. Hence we are done. 
\end{proof}

The following is the real version of a result from \cite{JLM} (see 
\cite[Theorem 10.27]{P}).

\begin{proposition}  \label{jlm}
Let $u: B \rightarrow A$ be a bounded finite rank map between two real $C^*$-algebras. Then for any $\varepsilon>0$, there is an integer $n$ and a factorization $u=w \circ v$ of the form
$$
B \xrightarrow{v} M_n(\mathbb{R}) \xrightarrow{w} A
$$
with $\|v\|_{\icb} \|w\|_{\icb} \leq \|v\|_{\icb} \|w\|_{\idec} \leq\|u\|_{\idec}(1+\varepsilon)$.
Therefore, if $z \in B^* \otimes A$ is the tensor associated to $u: B \rightarrow A$, we have
$$
\|u\|_{\idec}=\delta(z) .
$$
\end{proposition}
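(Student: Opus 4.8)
The plan is to reduce everything to the already-established complex case of this result (Pisier's \cite[Theorem 10.27]{P}) via complexification, exactly in the style of the other proofs in this section. First I would recall that a bounded finite rank real-linear map $u : B \to A$ complexifies to a bounded finite rank complex-linear map $u_c : B_c \to A_c$, with $\|u_c\|_{\icb} = \|u\|_{\icb}$ and $\|u_c\|_{\idec} = \|u\|_{\idec}$ (Theorem \ref{res:RealInComplex} and \cite[Proposition 1.1]{BReal}). Applying the complex JLM theorem to $u_c$ yields, for given $\varepsilon > 0$, an integer $m$ and a factorization $u_c = w' \circ v'$ through $M_m(\bC)$ with $\|v'\|_{\icb}\,\|w'\|_{\idec} \leq \|u_c\|_{\idec}(1+\varepsilon)$. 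The task is then to "descend" this complex factorization to a real factorization through some $M_n(\bR)$ without losing control of the norms.

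The key step is the descent. I would compose $v'$ with the canonical real part projection $\rho : A_c \to A$ on the target (ucp, hence dec-contractive) wherever appropriate, and on the $M_m(\bC)$ side use that $M_m(\bC) = (M_m(\bR))_c$ and that $M_m(\bC)$ embeds as a real operator system into $M_{2m}(\bR)$ via the map $c$, which is a faithful $*$-homomorphism (noted in the introduction, and recall $c$ is unital-complete-order, hence $\icb$- and $\idec$-contractive with contractive inverse on its range). Concretely: restrict $v'$ to $B \subseteq B_c$ to get a real map $v : B \to M_m(\bC) \hookrightarrow M_{2m}(\bR)$, set $n = 2m$, and take $w : M_n(\bR) \to A$ to be (the restriction to the range of $c$, followed by a completely positive projection back onto $c(M_m(\bC))$, then $c^{-1}$, then $w'$, then $\rho$). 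Each inserted map is either ucp or a complete order embedding/its inverse, so none increases the relevant norm; hence $\|v\|_{\icb}\,\|w\|_{\idec} \leq \|v'\|_{\icb}\,\|w'\|_{\idec} \leq \|u_c\|_{\idec}(1+\varepsilon) = \|u\|_{\idec}(1+\varepsilon)$. One also checks $w \circ v = u$ by unwinding that $v'|_B$ and $w'$ recombine to $u_c|_B = u$ after applying $\rho$, using that $u$ already maps into the real part $A \subseteq A_c$. This uses Proposition \ref{prop:DecCompDec} (composition of decomposable maps) and Corollary \ref{prop:cb<dec} ($\icb \le \idec$) for the chain of inequalities, plus Proposition \ref{prop:DecIfuCP}(1) for the cp factors.

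For the final assertion $\|u\|_{\idec} = \delta(z)$: the inequality $\|u\|_{\idec} \leq \delta(z)$ should follow from the factorization just produced, estimating $\|v\|_{\icb}$ and $\|w\|_{\idec}$ in terms of a representation of $z \in B^* \otimes A$ — indeed, any representation $z = \sum x_{ij} \otimes a_i b_j$ yields a factorization $B \to M_n(\bR) \to A$ (the first map given by $[x_{ij}]\in M_n(B^*)$, the second by $(a_i), (b_j)$), whose dec-norm product is bounded by $\|x\|\,\|\sum a_i a_i^*\|^{1/2}\|\sum b_j^* b_j\|^{1/2}$ via Proposition \ref{prop:verifyRuan2}-type estimates on $M_n(\bR) \to A$; taking the infimum gives $\|u\|_{\idec} \le \delta(z)$. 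The reverse inequality $\delta(z) \leq \|u\|_{\idec}$ is where the factorization theorem is essential: given the factorization $u = w \circ v$ with $n$, $v : B \to M_n(\bR)$, $w : M_n(\bR) \to A$, one uses the explicit form of $\delta$ on $(M_n(\bR))^* \otimes A = M_n(\bR)^* \otimes A$ together with Example \ref{P625} (the $\delta$-formula for maps $\ell^\infty_n(\bR) \to A$, extended to $M_n(\bR)$) to write down a representation of $z$ whose $\delta$-cost is at most $\|v\|_{\icb}\,\|w\|_{\idec}(1+\varepsilon') \le \|u\|_{\idec}(1+\varepsilon)(1+\varepsilon')$; letting $\varepsilon, \varepsilon' \to 0$ finishes it. Alternatively, and more cleanly, one can simply note $\delta(z) = \Delta(z) = \Delta_\bC(z) = \delta_\bC(z) = \|u_c\|_{\idec} = \|u\|_{\idec}$, using Theorem \ref{thm:DeltaVSdelta}, its complex analogue, and the complex version of this very proposition (\cite[Theorem 10.27]{P}) applied to $u_c$ with $z$ viewed in $B_c^* \otimes A_c$; this bypasses re-deriving the $\delta$-identity by hand.

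The main obstacle I anticipate is the descent step: making the real factorization genuinely land in $M_n(\bR)$ (not merely $M_m(\bC)$) while certifying that inserting the complete order embedding $c : M_m(\bC) \to M_{2m}(\bR)$, a completely positive complementation projection onto its range, and the real part projection $\rho$ does not inflate either $\|v\|_{\icb}$ or $\|w\|_{\idec}$ — one must be careful that $c^{-1}$ on the range of $c$ is completely contractive and completely positive (it is, since $c$ is a ucoe), and that the composition bookkeeping in Proposition \ref{prop:DecCompDec} is applied in the right order. The cleanest route is probably to avoid the hands-on descent for the norm identity entirely and lean on the $\Delta = \delta$ machinery of Theorem \ref{thm:DeltaVSdelta}, reserving the explicit real factorization only for the statement's first assertion about $u = w \circ v$ through $M_n(\bR)$.
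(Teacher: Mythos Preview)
Your proposal is correct and takes essentially the same approach as the paper: complexify $u$, apply the complex Junge--Le Merdy theorem to factor $u_c$ through $M_m(\bC)$, then descend by sandwiching with the embedding $c_n : M_m(\bC) \hookrightarrow M_{2m}(\bR)$, its canonical ucp left inverse $\rho_n$, the inclusion $\kappa_B$, and the real part projection $\rho_A$; for the norm identity $\|u\|_{\idec} = \delta(z)$ the paper uses exactly your ``cleaner'' route via Theorem~\ref{thm:DeltaVSdelta} and the complex case. One small point the paper makes explicit and you leave implicit: to bound the real $\idec$-norm of the descended $w$ by the complex $\idec$-norm of $w'$, you need Corollary~\ref{or} (a complex-linear map between complex systems has equal real and complex dec norms), not just Proposition~\ref{prop:DecCompDec}.
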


\begin{proof} Fix $\epsilon$ and let $u$ be as in the hypothesis. Then $u_c: B_c\to A_c$ is a finite rank map. We invoke the corresponding complex theorem \cite[Theorem 10.27]{P}. Hence there exist complex (hence real, by Corollary 
\ref{or}) decomposable maps $v,w$ such that $u_c = w\circ v$ and $B_c \xrightarrow{v} M_n(\mathbb{C}) \xrightarrow{w} A_c ,$ 
and  $$\|v\|_{\icb} \, \|w\|_{\icb} \leq \|v\|_{\icb} \, \|w\|_{\idec} \leq \|u\|_{\idec}(1+ \epsilon).$$
Now $M_n$ is an injective $C^*$-algebra in both the real and complex case, so Dec agrees with CB on maps into $M_n$.
We may embed $M_n(\bC)$ in $M_{2n}(\bR)$ via the ucp map $c_n.$  Let $\rho_n : M_{2n}(\bR) \to M_n(\bC)$ be the left canonical left inverse of $c_n$.
Hence 
$$u = (\rho_B \circ w \circ \rho_n)\circ (c_n \circ v \circ \kappa_A).$$
Now $\rho_A \circ w \circ \rho_n,$ being the composition of real decomposable and ucp maps, is real decomposable with real decomposable norm dominated by the complex (=real)  decomposable norm  $\|w \|_{\idec}$,  using Corollary 
\ref{or}. Thus 
 $$\begin{aligned}
\|c_n \circ v \circ \kappa_B\|_{\icb} \,  \|\rho_A \circ w \circ \rho_n\|_{\idec} 
&\leq \|v\|_{\icb}\|w\|_{\idec}\\
&\leq \|u_c\|_{\idec} \, (1 + \epsilon)\\
&= \|u\|_{\idec} \,(1+ \epsilon).
\end{aligned}$$
For the second assertion, suppose that $z \in B^* \otimes A$ is the tensor associated to $u: B \rightarrow A$.  We regard 
$z \in (B^*)_c \otimes A_c$, which we may identify
with an element $w$ of $(B_c)^* \otimes A_c$.
It is easy to see that this is the tensor associated to $u_c: B_c \rightarrow A_c$. 
By the complex case we have  $$\|u\|_{\idec} = \|u_c\|_{\idec} = \delta_{\bC}(w) = \delta_{\bC}(z).$$ 
However $\delta(z) = \delta_{\bC}(z)$ by \hyperref[thm:DeltaVSdelta]{Theorem \ref*{thm:DeltaVSdelta}}.
\end{proof}

{\bf Remark.}  The `positive' analogue of this result, \cite[Lemma 10.15]{P} is valid in the real case with the same proof, as is the important Corollary 10.16 there.  This can be  used, as in the complex case, to prove the real case of the famous Choi-Effros-Kirchberg characterization of nuclearity, namely that a real $C^*$-algebra is nuclear (with the usual definition that the min and max tensor norms agree) if and only if it has the CPAP (see \cite[Theorem 10.17]{P}).  Another route to this is mentioned at the end of Section 11.1 in \cite{BR}, where CPAP is called the CPFP, namely via the approach by Kavruk \cite[End of Section 4.1]{Kavrukthesis}. 

\section{Characterization of \texorpdfstring{${\rm Dec}_{\rm as}$}{Dec as}} \label{sec:DecAs}

For real operator systems $V,W$ we have $\decR(V,W) = \decR(V,W)_{\rm sa} \oplus \decR(V,W)_{\rm as},$ 
where the latter is $\{u\in \decR(V,W): u^*= -u\}$.  
Indeed any  $u = (u+ u^*)/2 + (u- u^*)/2$, and it is easy 
to verify that $u \in \decR(V,W)$ iff $u^* \in \decR(V,W).$ This is immediate since 
 $U^* \, \hat{u}(\cdot) \, U$ is also  completely positive 
 where $U = \begin{bmatrix}
    0 & 1\\ 1 & 0
\end{bmatrix}$. Also,  
$$\decR(V,W)_{\rm sa}  = {\rm Span}_{\mathbb{R}}(\cpR(V,W)) = \cpR(V,W) - \cpR(V,W)$$ by the final equation  in Proposition \ref*{prop:DecIfuCP}. 
 We also saw after that Proposition that in stark contrast to the complex theory, we do not in general have  $\decR(V,W) = {\rm Span}_{\mathbb{R}}(\cpR(V,W))$.
 Therefore it is interesting to consider and characterize $\decR(V,W)_{\rm as} := \{u\in \decR(V,W): u^*= -u\}$.
 The complex variant of this question has an almost trivial solution.  If $X,Y$ are complex operator systems, then $$\decC(X,Y)_{\rm as} = i\decC(X,Y)_{\rm sa} = i(\cpC(X,Y) -  \cpC(X,Y)),$$  
(since $(i u)^*(x) = (iu(x^*))^* = - iu(x)$).
Unfortunately, the real case is completely different. 

Again let $V,W$ be real operator systems.   
In what follows we write $\rho_{W}$ and $\sigma_{W}$ for the canonical projection maps $W_c \to W$ defined by
$z = \rho(z) + i \sigma(z)$. 
Then  $$\cpR(V,W) =  \rho_W \, \cpC(V_c, W_c) \,\kappa_V = \rho_W \, \cpR(V, W_c),$$ where, naturally, $\rho_W \, \cpC(V_c, W_c) \, \kappa_V$ is defined to be $\{ \rho_W \circ v \circ   \kappa_V : v \in \cpC(V_c, W_c)\},$ with $\kappa_V$ the canonical inclusion in the complexification (i.e. $\kappa_V: V\to V_c:= x\mapsto x$). 
By analogy with the prescription above; for the real case, we define the class $\scp(V,W)$ of {\em skew-cp} maps, that is, $$\scp(V,W):= \sigma_W \, \cpC(V_c, W_c) \, \kappa_V = \sigma_W \, \cpR(V, W_c).$$ 
 We call these the `skew-cp' maps since they are a `skew' of the completely positive maps, and they are indeed  skew or antisymmetric: $u^*=-u.$  (We shall see that $\sigma$ is the `prototypical' skew-cp map, in the real case.) 
We check the last equality in the last centered equation: Every $u \in \cpR(V, W_c)$ gives 
$v \in \cpC(V_c, W_c)$ by $v = \rho_{W_c} \, \circ u_c$, and 
$$\sigma_W \circ v \circ  \kappa_V = \sigma_W \circ \rho_{W_c} \circ u_c \circ  \kappa_V = \sigma_W \circ \rho_{W_c} \circ  \kappa_{W_c}  \circ u 
= \sigma_W \circ u .$$ 
So $\sigma_W \cpR(V, W_c) \subseteq \sigma_W \cpC(V_c, W_c) \, \kappa_V$.   
Conversely, it is obvious that $\cpC(V_c, W_c) \kappa_V  \subseteq \cpR(V, W_c)$.
Thus $$\scp(V,W) =  \{ \beta : V \to W |\  \exists \alpha \in \cpR(V,W) \;  s.t.\ \alpha + i \beta \in \cpR(V, W_c) \}.$$

We  define $\decR(V,W)^\circ_{\rm as}$ to be the set of
maps in $\decR(V,W)_{\rm as}$ whose 1-1 and 2-2 corners agree, that is, $$\decR(V,W)^\circ_{\rm as} := \{u\in \decR(V,W)_{\rm as} : \ \exists S\in \cpR(V,W): (S,S) \in \mathfrak{P}(u)\}.$$ 
Finally, we define the {\em imaginary completely positive
maps} ${\rm ICP}(V,W)$ to be the `imaginary parts' of 
completely positive
maps $\varphi : V_c \to W_c$, as defined in the  line after Lemma \ref{duofs}.

\begin{theorem} \label{decas} For real operator systems $V,W$ we have:
\begin{enumerate}
\item   [{\rm (1)}]  $\decR(V,W)_{\rm as} = \decR(V,W)^\circ_{\rm as} = \scp(V,W) = {\rm ICP}(V,W).$ 
\item  [{\rm (2)}] If $W$ is completely contractively complemented in $W^{**}$ (e.g.\ if $W$ is a dual operator system or real von Neumann algebra) then for any $u : V \to W$ in the set(s) in {\rm (1)} there exist
cp maps $S : V \to W$ and $\varphi : V_c \to W_c$ such that 
$u$ (resp.\ $S$) is the imaginary (resp.\ real) part of $\varphi$, $\hat{u} = c(S, u) = c(\varphi)$ (notation  as in {\rm (\ref{cis})}),
and $\| u \|_{\rm dec} = \| S \|$. 
\item  [{\rm (3)}] Under the conditions of {\rm (2)} we may also further choose  $S$ 
to be a positive scalar (namely,  $\| u \|_{\rm dec}$) multiple of a ucp map, and 
$$\| \hat{u} \| = \| \varphi \| = \| u \|_{\rm dec} + \| u(1) \|.$$  In particular if $W = B(H)$ and $u(1) = 0$ then we may choose $S, \hat{u}$ and $\varphi$ 
to each be a nc/matrix state multiplied by $\| u \|_{\rm dec}$. 
\end{enumerate} 
\end{theorem}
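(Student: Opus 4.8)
The goal is to prove the chain of equalities in (1), and then establish the refinements (2) and (3) using the attainment results from Remark \ref{att}. I would organize the argument around a cycle of inclusions, exploiting the complexification dictionary built in the preceding sections.

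First I would prove $\scp(V,W) = {\rm ICP}(V,W)$. This is nearly immediate from the definitions: given $\varphi = \psi_c + i\sigma_c \in \cpC(V_c, W_c)$, the map $\sigma = {\rm Im}\,\varphi$ satisfies $\sigma = \sigma_W \circ \varphi \circ \kappa_V$ with $\psi = \rho_W \circ \varphi \circ \kappa_V \in \cpR(V,W)$ and $\psi + i\sigma = \varphi \circ \kappa_V \in \cpR(V, W_c)$ (after identifying $\varphi\circ\kappa_V$ with the restriction); conversely the description $\scp(V,W) = \{\beta : \exists \alpha \in \cpR(V,W), \ \alpha + i\beta \in \cpR(V,W_c)\}$ derived just before the theorem shows every skew-cp map arises this way. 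The key point here is Lemma \ref{duofs} (or really just the definition of the complexified matrix cone via $c(\cdot,\cdot)$): $\alpha + i\beta \in \cpR(V, W_c)$ precisely when $c(\alpha,\beta)$ is cp into $M_2(W)$, which forces $\beta$ skew as noted in Example \ref{exrd}.

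Next, ${\rm ICP}(V,W) \subseteq \decR(V,W)^\circ_{\rm as}$: if $u = {\rm Im}\,\varphi$ with $S = {\rm Re}\,\varphi$, then $c(\varphi) = c(S,u) = \hat u$ is cp by the definition of the complexified cone, so $(S,S) \in \mathfrak{P}(u)$ after symmetrizing — actually one checks $\hat u = c(S,u)$ has matching corners $S$, so $(S,S)\in\mathfrak P(u)$ directly and $u$ is antisymmetric by Example \ref{exrd}. The trivial inclusion $\decR(V,W)^\circ_{\rm as} \subseteq \decR(V,W)_{\rm as}$ closes part of the cycle. The remaining and most substantial inclusion is $\decR(V,W)_{\rm as} \subseteq \scp(V,W)$ (or equivalently $\subseteq {\rm ICP}(V,W)$): given $u$ antisymmetric and decomposable with witnesses $(S_1, S_2) \in \mathfrak{P}(u)$, I want to produce a cp map $\varphi : V_c \to W_c$ whose imaginary part is $u$. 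The idea is to complexify $\hat u$: since $u^* = -u$, the map $\hat u = c(S_1, u)$... wait, $\hat u$ has corners $S_1, S_2$ not $S_1, u$; the antisymmetry makes the off-diagonal corners $u$ and $u^* = -u$. I would average the witnesses — replace $(S_1,S_2)$ by $(\tfrac{S_1+S_2}{2}, \tfrac{S_1+S_2}{2})$, which still lies in $\mathfrak{P}(u)$ because conjugating $\hat u$ by the flip unitary $U = \left[\begin{smallmatrix}0&1\\1&0\end{smallmatrix}\right]$ swaps the corners and preserves cp-ness (using antisymmetry to see the off-diagonal is unchanged), then average. So WLOG $S_1 = S_2 =: S$ and $\hat u = c(S,u)$, which by definition of the complexified cone says exactly $S + iu \in \cpR(V, W_c)$, i.e. there is $\varphi = (S+iu)$-type cp map $V_c \to W_c$ (complexify the real cp map $S + iu : V \to W_c$ to $\varphi: V_c \to W_c$ via \cite[Lemma 3.1]{BR} or the universal property). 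Its imaginary part is $u$, so $u \in {\rm ICP}(V,W)$. This simultaneously shows $\decR(V,W)_{\rm as} = \decR(V,W)^\circ_{\rm as}$, since the averaged witness has equal corners.

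Finally for (2) and (3): when $W$ is completely contractively complemented in $W^{**}$, Remark \ref{att} gives that the infimum defining $\|u\|_{\rm dec}$ is attained, so we may take the witnesses with $\max\{\|S_1\|,\|S_2\|\} = \|u\|_{\rm dec}$; after the averaging step $\|S\| = \|\tfrac{S_1+S_2}{2}\| \le \|u\|_{\rm dec}$, and since $(S,S)$ is itself a valid witness, $\|S\| \ge \|u\|_{\rm dec}$, giving $\|S\| = \|u\|_{\rm dec}$. Then $\varphi = c$-complexification of $S + iu$ has $\varphi(1) = S(1) + iu(1)$ living in $M_2(W)$ as $c(S(1), u(1))$, and $\|\varphi\| = \|\varphi(1)\| = \|c(S(1),u(1))\|$; I would compute this norm — since $S(1) \ge 0$ and $c(S(1),u(1))$ is a positive element of $M_2(W)$, its norm is $\|S(1)\| + \|u(1)\| = \|S\| + \|u(1)\| = \|u\|_{\rm dec} + \|u(1)\|$ (the norm of such a $2\times 2$ positive matrix with scalar-like structure; here one uses that $\hat u$ is cp so $\|\hat u\| = \|\hat u(1)\|$, and $\hat u(1) = c(S(1),u(1))$, combined with a direct estimate). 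For the $W = B(H)$, $u(1) = 0$ case: then $S(1)$ is a positive contraction scaled by $\|u\|_{\rm dec}$, and by Lemma \ref{CEun} (or Proposition \ref{ss}, whose proof works when the infimum is attained) we may replace $S$ by $\|u\|_{\rm dec}$ times a ucp map, hence a matrix state, and correspondingly $\hat u$ and $\varphi$ become $\|u\|_{\rm dec}$ times nc states.

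\textbf{Main obstacle.} The genuinely nontrivial step is $\decR(V,W)_{\rm as} \subseteq {\rm ICP}(V,W)$ — specifically, verifying that the averaging trick is legitimate (that conjugating $\hat u$ by the flip unitary genuinely preserves complete positivity and interacts correctly with antisymmetry so that the averaged map still has $u$ in its off-diagonal corners) and then correctly extracting from $\hat u = c(S,u)$ being cp the existence of a \emph{complex-linear} cp map $\varphi: V_c \to W_c$ rather than merely a real cp map $V \to W_c$. This is exactly the kind of "forgetting complex linearity" pitfall the introduction warns about, so I would be careful to invoke \cite[Lemma 3.1]{BR} (unique complex-linear cp extension to the complexification) explicitly. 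The norm computation $\|c(S(1),u(1))\| = \|S(1)\| + \|u(1)\|$ in (3) also needs a short honest argument rather than being waved through, though it is not deep.
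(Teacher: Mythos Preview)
Your approach is essentially the same as the paper's, and the overall architecture (cycle of inclusions, averaging to symmetrize the witnesses, then Remark \ref{att} plus Proposition \ref{ss} for (2)--(3)) is correct. There is, however, one concrete error in the step you yourself flag as the main obstacle.

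Conjugating $\hat u = \begin{bmatrix} S_1 & u \\ -u & S_2 \end{bmatrix}$ by the flip $U = \begin{bmatrix} 0 & 1 \\ 1 & 0 \end{bmatrix}$ gives $\begin{bmatrix} S_2 & -u \\ u & S_1 \end{bmatrix}$, not $\begin{bmatrix} S_2 & u \\ -u & S_1 \end{bmatrix}$: the off-diagonal signs swap as well. So your averaged map has zero off-diagonal and witnesses $0$, not $u$. The fix is small: after the flip, conjugate once more by $D = \begin{bmatrix} 1 & 0 \\ 0 & -1 \end{bmatrix}$ (equivalently, conjugate $\hat u$ by the rotation $\begin{bmatrix} 0 & -1 \\ 1 & 0 \end{bmatrix}$), which restores the off-diagonal to $(u,-u)$ while keeping the diagonals swapped. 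This is exactly what the paper means by ``a shuffle and unitary multiplication.'' With this correction your averaging argument goes through and gives $(S,S) \in \mathfrak P(u)$ with $S = \tfrac{1}{2}(S_1+S_2)$.

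A minor ordering point in (3): the identity $\|c(S(1),u(1))\| = \|S(1)\| + \|u(1)\|$ is \emph{not} valid for general positive $S(1)$ (try $S(1) = {\rm diag}(1,2)$, $u(1)$ a small skew matrix). You need to first replace $S$ by $\|u\|_{\rm dec}$ times a ucp map (via the Proposition \ref{ss} trick, which applies since the infimum is attained), so that $S(1) = \|u\|_{\rm dec}\cdot 1$; only then does the scalar formula $\|c(1,x)\| = 1 + \|x\|$ for skew $x$ give $\|\hat u(1)\| = \|u\|_{\rm dec} + \|u(1)\|$. The paper does this in that order.
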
 

\begin{proof}  (1)\ The ideas are mainly as in Example \ref{exrd}.  We first show that $\scp(V,W) \subseteq \decR(V,W)_{\rm as}$. Suppose that $u \in \scp(V,W)$, with $u= \sigma_W \circ v \circ  \kappa_V $ for some map $v$ in $\cpC(V_c, W_c)$. Note that  $$\sigma_W((a + ib)^*) = \sigma_W( a^* - i b^*) = -b^* = - \sigma_W(a + ib)^* , \qquad a, b \in V.$$  Hence $\sigma_W$ is skew, and thus so is $u = \sigma_W \circ v \circ  \kappa_V$.  Similarly $\sigma_W$ is decomposable, and therefore  so is $u = \sigma_W \, \circ \, v \, \circ  \, \kappa_V$, since $v$ and $\kappa_V$ are 
real cp.  To see that $\sigma_W : W_c \to W$ is decomposable, note that 
$$\begin{bmatrix}
\rho & -\sigma_W \\ \sigma_W & \rho 
\end{bmatrix} = I_{W_c}$$ may be viewed as the (ucp) identity map of $W_c$.

If $\beta \in \scp(V,W)$ with $\alpha + i \beta \in \cpR(V,W_c)$, then we may identify the latter with $$\hat{\beta} = \begin{bmatrix}
    \alpha & -\beta \\
    \beta & \alpha
    \end{bmatrix} ,$$
which is therefore completely positive.  So $\beta \in 
{\rm Dec}^\circ(V,W)_{\rm as}$. 
This argument is reversible, hence $\scp(V,W) = \decR(V,W)^\circ_{\rm as}.$

If $u \in {\rm Dec}(V,W)_{\rm as}$ with  
$$\hat{u} = \begin{bmatrix}
    S_1& u\\
    -u &S_2
\end{bmatrix}$$ completely positive, 
then by a shuffle and unitary multiplication we also have 
 $$\begin{bmatrix}
    S_2 & u\\
    -u & S_1
\end{bmatrix}$$ completely positive. Since the average of these two matrices is completely positive, we see that $u \in \scp(V,W)$ and in $\decR(V,W)^\circ_{\rm as}$. Hence $$\decR(V,W)_{\rm as} = \decR(V,W)^\circ_{\rm as} = \scp(V,W).$$
We leave it to the reader that $\scp(V,W) = {\rm ICP}(V,W).$ 

(2)\ If $W$ is completely contractively complemented in $W^{**}$ then by Remark \ref{att} the Dec norm is achieved.  That is, there exist $(S_1, S_2) \in \mathfrak{P}(u)$ with $\max \{ \| S_1 \| ,\| S_2 \| \} = \| u \|_{\rm dec}.$  
By the argument above, we may replace both $S_1$ and $S_2$ with their average $S$,
and in the last displayed equation we may replace the max with $\|S \|$.    Also, $\hat{u} = c(\varphi)$   by Lemma \ref{duofs} for a cp map $\varphi \in \cpC(A_c, B(H)_c)$ with
$\| \varphi \| = \| \hat{u} \|$, and we have 
$u= \sigma_{B(H)} \circ \varphi \circ  \kappa_V$, the `imaginary part'.  Similarly $S$ is the `real part' of $\varphi$. 

(3)\ Finally, 
by the last assertion in  Remark \ref{att} we can take $S$  to be $\| u \|_{\rm dec}$ times a ucp
map. 
It is a simple exercise (using spectral theory or a direct calculation), that if 
$x$ is a real or complex operator with $x^*=-x$ then $\| c(I,x) \| = 1 + \| x \|$.
Thus we have $$\| \varphi \| = \| \hat{u} \| = 
\| \hat{u}(1) \| = \| c(\| u \|_{\rm dec} 1, u(1)) \| = \| u \|_{\rm dec} + \| u(1) \|.$$
Thus $u(1) = 0$ if and only if 
 $\| \hat{u} \| = 
 \| u \|_{\rm dec}$, and in this case it is  easy to see that $\hat{u}$ and $\varphi$ are ucp.  
\end{proof}

{\bf Remarks.} 1)\  There is a natural norm on 
 $\decR(V,W)_{\rm as} = \decR(V,W)^\circ_{\rm as}$ defined by $\inf \{ \| S \|: (S,S)\in \mathfrak{P}(u)\}$, which is easily seen to equal the
$\idec$ norm by the averaging trick above. 

\smallskip

2)\ By the same reasoning as in 1), even if $W$ is not 
contractively complemented in $W^{**}$, we can find 
cp maps $S : V \to W$ and $\varphi : V_c \to W_c$ satisfying all of the conditions in the last statements of the theorem, except that 
some of the numbers  are `within epsilon'.  E.g.\ 
$\| S \|$ is within $\epsilon$ of   $\| u \|_{\rm dec}$ in (2), and in (3)  the occurrences of 
$\| u \|_{\rm dec}$ should be replaced by `a number within epsilon of' $\| u \|_{\rm dec}$. 

\smallskip

3)\ Actually it is not known whether the contractively complemented assumption in the theorem is necessary at all.   Indeed this is related to the apparently still open question in \cite{Haag} as to whether the infimum in $\| u \|_{\rm dec}$ is always achieved.   Perhaps  Theorem \ref{decas} can be used to  give insight into this problem. 

\medskip 

We now relate the above to nc convexity as mentioned in the introduction: Let $K$ be a real nc convex set in the sense of \cite{BMcI}.   The complexification $K_c$ of $K$ consists of elements $z= x + iy$ where $c(x,y) \in K$.  We call $x$  the `real part' and $y$ the `imaginary part'.  We call the set of such $y$ the `imaginary part' of $K_c$.  This is also a nc convex set \cite{BMcI}.

If $K$ is also compact then we may assume by the duality in that paper that 
$K$ is the nc state space ${\rm ncS}(V)$ of a real  operator system $V$.  That is, 
$K_n = {\rm UCP}(V,M_n(\bR))$ for each $n$.  Let $W = B(H) = M_n(\bR)$ for a cardinal $n$.  Let $C$ be the 
real nc convex set with $C_n = {\rm CP}(V,M_n(\bR))$ for each $n$.
Next we consider SCP$(V,W)$ as a real nc convex set (and operator space).
We do this by identifying it with the nc subset ${\rm CB}(V,W)_{\rm as}$ of the 
operator space ${\rm CB}(V,W)$.
Write $\bB$ for the real compact nc convex set
of matrix unit balls for  ${\rm CB}(V,W)_{\rm as} = {\rm Dec}(V,W)_{\rm as}$, these  with its canonical operator space structure.  In particular $\bB_1 = {\rm Ball}({\rm CB}(V,W)_{\rm as}) = {\rm Ball}(\scp(V,W))$.  Write $\bB_0$ for the real nc convex compact subset of $\bB$ consisting of 
the maps $u$ with $u(1) = 0$. 

\begin{theorem} \label{coco} We have that $\bB_0$ is the `imaginary part' 
of the complexification $K_c$ of the real nc compact convex set $K$ above. 
Also,  ${\rm SCP}(V,W)$ is the imaginary part 
of the complexification $C_c$ of the real nc convex set $C$ above.
\end{theorem}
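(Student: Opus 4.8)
The plan is to prove both statements level by level, reducing everything to Theorem~\ref{decas}(1) together with the description of the complexification of a real nc convex set recalled just above Lemma~\ref{duofs}: for a real nc convex set $L$ the level-$m$ part of $L_c$ consists of the elements $\psi+i\sigma$ with $c(\psi,\sigma)\in L_{2m}$, and the `imaginary part' of $L_c$ is the real nc set collecting the corresponding $\sigma$'s.  Applying Lemma~\ref{duofs} at each level (and using $M_m(W)_c=M_m(W_c)$), for $L=C$ this says that $\psi+i\sigma$ is precisely a cp map $V_c\to M_m(W_c)$ with real part $\psi$ and imaginary part $\sigma$, so the imaginary part of $C_c$ at level $m$ is exactly ${\rm ICP}(V,M_m(W))$; for $L=K={\rm ncS}(V)$ the same holds with `cp' replaced by `ucp', which moreover forces $\psi(1)=I$ and $\sigma(1)=0$.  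Since $W=B(H)$, each $M_m(W)=B(H^{(m)})$ is an injective real $C^*$-algebra --- indeed a real von Neumann algebra --- a fact I use freely below.

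First I would dispose of ${\rm SCP}(V,W)$ and $C_c$.  An element $\sigma$ of the imaginary part of $C_c$ at level $m$ is one for which some $\psi$ makes $c(\psi,\sigma)$ completely positive; compressing $c(\psi,\sigma)$ by $\begin{bmatrix}I\\0\end{bmatrix}$ shows the accompanying $\psi$ is automatically cp, and conjugating $c(\psi,\sigma)$ by the self-adjoint unitary ${\rm diag}(1,-1)$ --- a canonical shuffle followed by unitary multiplication, which preserves complete positivity, exactly as in the proof of Theorem~\ref{decas}(1) --- shows $c(\psi,\sigma)$ is cp if and only if $(\psi,\psi)\in\mathfrak{P}(\sigma)$.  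Hence the imaginary part of $C_c$ at level $m$ is $\decR(V,M_m(W))^\circ_{\rm as}$, which equals $\scp(V,M_m(W))$ by Theorem~\ref{decas}(1), i.e.\ the level-$m$ part of the nc set ${\rm SCP}(V,W)$; equivalently it is ${\rm ICP}(V,M_m(W))=\scp(V,M_m(W))$.  This gives the second assertion.

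Next I would treat $\bB_0$ and $K_c$, where the normalization must be watched.  For one inclusion: if $\sigma$ lies in the imaginary part of $K_c$ at level $m$, the shuffle above produces $(\psi,\psi)\in\mathfrak{P}(\sigma)$ with $\|\psi\|=\|\psi(1)\|=1$, so $\sigma\in\decR(V,M_m(W))^\circ_{\rm as}$ with $\|\sigma\|_{\icb}\le\|\sigma\|_{\rm dec}\le 1$ and $\sigma(1)=0$; thus $\sigma$ lies in the level-$m$ part of $\bB_0$.  Conversely, take $u$ in the level-$m$ part of $\bB_0$, i.e.\ $u\in{\rm CB}(V,M_m(W))_{\rm as}$ with $\|u\|_{\icb}\le1$ and $u(1)=0$.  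By Proposition~\ref{prop:InjectiveImpliesCB=Dec} (injectivity of $M_m(W)$) we get $u\in\decR(V,M_m(W))_{\rm as}$ with $\|u\|_{\rm dec}=\|u\|_{\icb}\le1$; by Theorem~\ref{decas}(1),(2) --- the infimum being attained since $M_m(W)$ is a real von Neumann algebra, Remark~\ref{att} --- there is $S\in\cpR(V,M_m(W))$ with $(S,S)\in\mathfrak{P}(u)$ and $\|S\|=\|S(1)\|=\|u\|_{\rm dec}\le1$.  Exactly as in the proof of Proposition~\ref{ss}, I would replace $S$ by the ucp map $\psi:=S+\omega(\cdot)\,(I-S(1))$ for a state $\omega$ of $V$; this keeps $(\psi,\psi)\in\mathfrak{P}(u)$ and leaves $u(1)=0$ unchanged, and then the shuffle gives that $c(\psi,u)$ is ucp, i.e.\ $\psi+iu$ is a ucp map $V_c\to M_m(W_c)$ with imaginary part $u$.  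Hence $u$ lies in the imaginary part of $K_c$ at level $m$.

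The hard part is precisely this normalization in the $\bB_0$/$K_c$ direction: one must exhibit a witness pair whose common member is simultaneously unital, of decomposable norm at most $1$, and compatible with $u(1)=0$, and it is here that injectivity of $M_m(W)$ (which both identifies $\|u\|_{\rm dec}$ with $\|u\|_{\icb}$ and, via Remark~\ref{att}, makes the infimum attained), the defining condition $u(1)=0$ of $\bB_0$, and the state-unitalization trick of Proposition~\ref{ss} all have to be combined correctly.  A final, routine verification is that the level-wise bijections assembled above are nc-affine --- i.e.\ respect direct sums and nc convex combinations --- so that they are isomorphisms of real nc convex sets rather than mere bijections at each level; this is automatic because each is induced by the canonical nc-affine identifications $M_m(W)_c=M_m(W_c)$ and $M_m({\rm CB}(V,W))={\rm CB}(V,M_m(W))$.
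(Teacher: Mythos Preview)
Your proposal is correct and follows essentially the same route as the paper: both directions reduce to Theorem~\ref{decas}, using the diagonal shuffle to pass between $c(\psi,u)$ and the decomposability witness $(\psi,\psi)\in\mathfrak{P}(u)$, together with Proposition~\ref{prop:InjectiveImpliesCB=Dec} and the unitalization trick. Your converse for the $\bB_0$/$K_c$ case is in fact spelled out more carefully than the paper's---the paper simply cites Theorem~\ref{decas}(3) to ``choose $S,\hat u,\varphi$ to be nc/matrix states,'' which strictly speaking only gives $\|u\|_{\rm dec}$ times a state; your explicit use of the state-augmentation $\psi=S+\omega(\cdot)(I-S(1))$ makes the passage to a genuine ucp witness transparent.
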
 

\begin{proof} If $x + iy \in K_c$ then 
$x$ is in $K = {\rm ncS}(V)$, and can be identified with a ucp map $S : V \to M_n(\bR)$.   The `imaginary part'
$y$ may be viewed as a real linear map $u : V \to M_n(\bR)$.
We know from \cite{BMcI} that $K_c$ can be identified with 
ncS$(V_c)$.   Let $\varphi = S_c + i u_c : V_c \to M_n(\bC)$. Then $u$ is a map in the set in (1) of the previous theorem with $W = M_n(\bR)$, and all of the conditions in (2) and (3) of the theorem are met.  See also  Lemma \ref{duofs}.
In particular 
$\hat{u}$ is ucp and $u \in \bB_0$ (that $u(1)=0$ is immediate from $\hat{u}(1) = I$).  
 Conversely, if $u \in \bB_0$, so that 
$u : V \to M_n(\bR)$  is an antisymmetric complete contraction with $u(1) = 0$, then by (3) we may choose  $S, \hat{u}, \varphi$ 
to be nc/matrix states.  Indeed $u$ and $S$ are the real and imaginary part of 
the complex nc state $\varphi$ of $V_c$.
That is,  $S + i u \in 
K_c = {\rm ncS}(V_c)$. The other assertion is similar but easier. 
\end{proof}  

  It seems remarkable that every 
map in $\bB_0$ is the imaginary part of a ucp map.
Of course the situation above is somewhat simplified because ${\rm CB}(V,W)_{\rm as} = {\rm Dec}(V,W)_{\rm as}$
when $W = M_n(\bR)$.  However in a more general nc convexity situation one might have a  more general $W$ where ${\rm CB}(V,W) \neq {\rm Dec}(V,W)$, but the dec norm is still attained (and we recall that it is open as to whether the latter is always true).  In this case, the last theorem would still be valid, relating the antisymmetric decomposable maps to 
the `imaginary part’ of a complex nc convex set.

\medskip 

{\bf Example.}  For the quaternions $\bH$ we have by  Proposition \ref{prop:InjectiveImpliesCB=Dec} that 
Dec$_{\rm sa}(\bH,\bH) = {\rm CB}_{\rm sa}(\bH,\bH)$ since $\bH$ is injective \cite{ROnr}.
This is 10 (real) dimensional, while Dec$_{\rm as}(\bH,\bH) = {\rm CB}_{\rm as}(\bH,\bH)$ is 6 dimensional.  Indeed, every linear map
$v : \bH_{\rm as} \to \bH_{\rm as}$ defines 
an element in Dec$_{\rm sa}(\bH,\bH)$ by $x \mapsto 
v(x - x_1 1)$.  Here $x_1$ is the coefficient of $1$ in $x$.
(We recall that any linear map between finite dimensional operator spaces, or more generally any bounded finite rank map, is completely bounded.  This follows because bounded functionals are
completely bounded.) This gives 9 of the dimensions, while the last comes from the map $x \mapsto x_1 1$.

Similarly, every 
$y \in \bH_{\rm as}$ defines 
an element in Dec$_{\rm as}(\bH,\bH)$ by $x \mapsto 
x y + y x$.    This gives 3 of the dimensions, while the other 3 come from the maps $x \mapsto x_1 \, y$.

\bigskip

We suggest an investigation of the theory of $\scp$ maps.  This should hopefully be useful for example in the study of real nc convexity, for the reasons given in our `imaginary part' discussion in the introduction.   There should be many aspects of the theory of completely positive maps that one may generalize to SCP maps.  The following  results are now very simple, however one could make a long list of known results about completely positive maps and ask if there are `decomposable variants'.

\begin{theorem}[Stinespring dilation theorem for SCP maps] \label{Sti}
Let $A$ be a real unital $C^*$-algebra and let $\phi \in \scp(A,B(H)).$  Then there exist a complex Hilbert space $K$ and a unital $*$-homorphism $\pi: A_c \to B(K),$ and an operator $T : H_c \to K$, with  $\|\phi \|_{\rm dec} + \| \phi(1) \| =  \|\phi \|_{\rm cb} + \| \phi(1) \| =\| T \|^2$, such that $$\phi(a) = \sigma(T^*\pi(a)T) , \qquad a \in A.$$ 
Here $\sigma = \sigma_{B(H)} : B(H)_c = B(H_c) \to B(H)$ is the canonical projection (taking $R_1 + i R_2$ to $R_2$ for $R_i \in B(H)$).
\end{theorem}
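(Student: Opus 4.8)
The plan is to reduce the statement to the classical complex Stinespring theorem via complexification, which is the strategy used repeatedly throughout the paper. First I would invoke Theorem~\ref{decas}~(1) together with the definition of $\scp$ to write $\phi = \sigma_{B(H)} \circ \varphi \circ \kappa_A$ for a completely positive map $\varphi : A_c \to B(H)_c \cong B(H_c)$, where $\varphi = \psi_c + i\phi_c$ with $\psi$ the `real part' cp map. Since $B(H)$ is injective, Proposition~\ref{prop:InjectiveImpliesCB=Dec} gives $\|\phi\|_{\rm dec} = \|\phi\|_{\rm cb}$; and by Theorem~\ref{decas}~(3) (applicable since $B(H)$ is a dual operator system, hence completely contractively complemented in its bidual) we may choose $\varphi$ with $\|\varphi\| = \|\phi\|_{\rm dec} + \|\phi(1)\|$. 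So the numerical claim $\|\phi\|_{\rm dec} + \|\phi(1)\| = \|\phi\|_{\rm cb} + \|\phi(1)\| = \|\varphi\|$ is already in hand, and it remains to produce the dilation data with $\|T\|^2 = \|\varphi\|$.

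Next I would apply the classical (complex) Stinespring dilation theorem to the completely positive map $\varphi : A_c \to B(H_c)$, where $A_c$ is a complex unital $C^*$-algebra and $H_c$ a complex Hilbert space. This yields a complex Hilbert space $K$, a unital $*$-homomorphism $\pi : A_c \to B(K)$, and a bounded operator $T : H_c \to K$ with $\varphi(a) = T^*\pi(a)T$ for all $a \in A_c$ and $\|T\|^2 = \|\varphi(1)\| = \|\varphi\|$ (the minimal Stinespring dilation has exactly this norm for the isometry-up-to-scalar). Then for $a \in A$ (viewed inside $A_c$ via $\kappa_A$) we compute
\[
\phi(a) = \sigma_{B(H)}(\varphi(a)) = \sigma_{B(H)}(T^* \pi(a) T),
\]
which is precisely the asserted formula, with $\sigma = \sigma_{B(H)}$ the canonical projection $B(H_c) = B(H)_c \to B(H)$ sending $R_1 + iR_2 \mapsto R_2$. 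This completes the construction.

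The main obstacle, such as it is, is bookkeeping rather than a genuine difficulty: one must be careful that the identification $B(H)_c \cong B(H_c)$ (which the paper denotes $\eta$ in the proof of Theorem~\ref{thm:delta=max-real}) is being used consistently, so that $\sigma_{B(H)}$ really is the map extracting the `imaginary part' coefficient $R_2$ from $R_1 + iR_2$, and that $\varphi$ genuinely has $\phi$ as its imaginary part under this identification — this is exactly what Theorem~\ref{decas} and Lemma~\ref{duofs} deliver. A secondary point to verify is that the norm of the Stinespring operator for the \emph{minimal} dilation equals $\|\varphi(1)\|^{1/2}$ in norm-squared, i.e.\ $\|T\|^2 = \|\varphi(1)\|$; this is standard, and combined with $\|\varphi\| = \|\varphi(1)\| = \|\phi\|_{\rm dec} + \|\phi(1)\|$ from Theorem~\ref{decas}~(3) it closes the numerical identities. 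One could alternatively give a `direct' proof mimicking the classical Stinespring construction on the GNS-type space built from $\psi$, but the complexification route is shorter and matches the paper's stated philosophy of providing complexification proofs wherever available.
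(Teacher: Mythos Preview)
Your proposal is correct and follows essentially the same route as the paper's proof: invoke Proposition~\ref{prop:InjectiveImpliesCB=Dec} for $\|\phi\|_{\rm dec}=\|\phi\|_{\rm cb}$, use Theorem~\ref{decas} to produce a cp map $\varphi:A_c\to B(H)_c$ with $\phi=\sigma_{B(H)}\circ\varphi\circ\kappa_A$ and $\|\varphi\|=\|\phi\|_{\rm dec}+\|\phi(1)\|$, then apply the classical complex Stinespring theorem to $\varphi$. Your bookkeeping remarks about the identification $B(H)_c\cong B(H_c)$ and the norm of the Stinespring operator are accurate and mirror exactly what the paper does implicitly.
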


\begin{proof}  By Proposition \ref{prop:InjectiveImpliesCB=Dec} we have Dec$(A,B(H)) = {\rm CB}(A,B(H))$ isometrically. Moreover the `dec norm' is achieved,  and by  Theorem \ref{decas} there is a  cp map $v \in \cpC(A_c, B(H)_c)$ with 
$\phi = \sigma_{B(H)} \circ v \circ  \kappa_V$ and $\| \phi \|_{\rm cb} + \| \phi (1) \| =  \| v \|$. 
By Stinespring's theorem
there exists a complex Hilbert space $K$ and a unital $*$-homomorphism $\pi: A_c \to B(K),$ and an operator $T : H_c \to K$ with $\| v \| = \| T \|^2$ and 
 $v(a) = T^*\pi(a)T$ for $a \in A$.  Thus 
$$\phi(a) = \sigma(T^*\pi(a)T), \qquad a \in A,$$ as desired.  
\end{proof} 

{\bf Remark.} Conversely a map of the form in the last theorem is SCP, by the definition of the latter.  

\bigskip

Arveson's extension theorem for SCP maps is on the other hand obvious: 
Let $A$ be a real $C^*$-algebra and $V$ be an operator subsystem of $A.$ Given $\phi \in \scp(V,B(H)),$ with $H$ a real Hilbert space, then there exists some $\psi\in \scp(A,B(H))$ extending $\phi.$  Moreover this may be done with $\| \phi \|_{\rm cb} = \| \psi \|_{\rm cb}$. 
Indeed if $u$ is a complete contraction in $\scp(V,B(H))$
then there exists 
a 
completely contractive extension 
$v : A \to B(H)$.   Then $\frac{1}{2}(v - v^*)$ does the trick.

Combining this Arveson extension with our Stinespring version above 
gives a generalized version of Theorem \ref{Sti}.  This represents  a map $\phi \in \scp(V,B(H))$ as $$\phi(x) = \sigma(T^*\pi(x)T) , \qquad x \in V,$$
for a unital $*$-representation $\pi$ of a complexification of any  unital $C^*$-algebra containing $V$ as an operator subsystem, with  $\|\phi \|_{\rm cb} + \| \phi(1) \| =\| T \|^2$. 

\section{Other applications of Dec} \label{sec:Apps}

In this section, we check the real case of several results from \cite{P,Pisbk} and \cite{Haag} related to decomposable maps.

\subsection{Max-tensorizing maps}

\begin{proposition}\label{cor:ProbChild}
Let $V,W,X$ be real operator systems. For any $u \in \decR(V,W)$ and all $x\in X\otimes V$ we have 
$$
\left\|\left(I_X \otimes u\right)(x)\right\|_{X \otimes_{\max} W} \leq\|u\|_{\idec} \, \|x\|_{X \otimes_{\max } V}.
$$

Moreover, the mapping $I_X \otimes u: X \otimes_{\max } V \rightarrow X \otimes_{\max } W$ is decomposable and its norm satisfies

$$
\left\|I_X \otimes u\right\|_{{\decR}(X \otimes_{\max } V, X \otimes_{\max } W)} \leq\|u\|_{\idec}.
$$
\end{proposition}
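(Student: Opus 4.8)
The plan is to prove this by complexification, in the spirit of the other results in this section. First I would recall from \cite{BR} (or deduce by the same argument as the $C^*$-algebra case invoked earlier) that the operator system max tensor product is functorial with respect to complexification, so that there are unital complete order (hence completely isometric) identifications $(X \maxten V)_c \cong X_c \maxten V_c$ and $(X \maxten W)_c \cong X_c \maxten W_c$; in particular $X \maxten V$ sits inside $X_c \maxten V_c$ as a real operator subsystem, with its (matrix) norms the restrictions of those of $X_c \maxten V_c$. A routine check on elementary tensors, using that $u_c$ restricts to $u$ on $V$, shows that under these identifications $I_{X_c} \otimes u_c$ agrees on the algebraic tensor product $X \otimes V$ with $I_X \otimes u$.

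Next, by Theorem \ref{res:RealInComplex}, $u \in \decR(V,W)$ gives $u_c \in \decC(V_c, W_c)$ with $\|u_c\|_{\idec} = \|u\|_{\idec}$. I would then invoke the complex version of the present proposition, namely the classical fact that decomposable maps are `max-tensorizing' (see Chapter 6 of \cite{P}), which as noted throughout this paper holds for operator system domains and ranges and not merely for $C^*$-algebras: the map $I_{X_c} \otimes u_c$ extends to a decomposable map from $X_c \maxten V_c$ to $X_c \maxten W_c$ with $\|I_{X_c} \otimes u_c\|_{\idec} \leq \|u_c\|_{\idec}$, and with $\|(I_{X_c} \otimes u_c)(z)\|_{X_c \maxten W_c} \leq \|u_c\|_{\idec}\, \|z\|_{X_c \maxten V_c}$ for $z \in X_c \otimes V_c$. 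Restricting this decomposable extension to the real operator subsystem $X \maxten V$, one checks it maps into $X \maxten W$ and restricts on $X \otimes V$ to $I_X \otimes u$; hence $I_X \otimes u$ extends to a map $X \maxten V \to X \maxten W$ whose complexification is $I_{X_c} \otimes u_c$. Applying Theorem \ref{res:RealInComplex} in the other direction then gives that $I_X \otimes u$ is real decomposable with
$$\|I_X \otimes u\|_{\decR(X \maxten V,\, X \maxten W)} = \|I_{X_c} \otimes u_c\|_{\idec} \leq \|u_c\|_{\idec} = \|u\|_{\idec},$$
which is the `moreover' assertion. The pointwise inequality is then immediate: by Corollary \ref{prop:cb<dec} we have $\|I_X \otimes u\|_{\icb} \leq \|I_X \otimes u\|_{\idec} \leq \|u\|_{\idec}$, so $\|(I_X \otimes u)(x)\|_{X \maxten W} \leq \|u\|_{\idec}\,\|x\|_{X \maxten V}$ for all $x \in X \otimes V$; alternatively it can be read off directly from the complex pointwise inequality above together with the isometric embeddings $X \maxten V \hookrightarrow X_c \maxten V_c$ and $X \maxten W \hookrightarrow X_c \maxten W_c$.

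I expect the only content beyond bookkeeping to be the functoriality $(X \maxten V)_c \cong X_c \maxten V_c$ in the operator system category, together with the assertion that the complex max-tensorizing theorem is valid for operator systems rather than merely $C^*$-algebras; the former is in (or immediate from) \cite{BR}, and the latter is part of this paper's standing observation that the proofs of \cite{Haag, P} go through verbatim when the domain and range are operator systems. Should one prefer to bypass the complex operator system case, one could instead mimic the complex argument directly, realizing $u$ via its associated completely positive map $\hat{u} : V \to M_2(W)$ and a bimodule/Paulsen-type system along the lines of the discussion preceding Section \ref{sec:deltaNorm}; but the complexification route above is much shorter and is what I would present.
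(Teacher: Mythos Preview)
Your proof is correct and follows essentially the same complexification route as the paper: both identify $(X \maxten V)_c \cong X_c \maxten V_c$ via \cite{BR}, observe that $(I_X \otimes u)_c = I_{X_c} \otimes u_c$, apply the complex version \cite[Proposition 6.11]{P} (valid for operator systems), and then read off the real conclusions via Theorem \ref{res:RealInComplex}. The only cosmetic difference is that you deduce the pointwise inequality from $\|\cdot\|_{\icb} \leq \|\cdot\|_{\idec}$, whereas the paper reads it off directly from the complex pointwise inequality and the isometric embeddings.
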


\begin{proof}
The proof follows from complexification and the complex case of the hypothesis \cite[Proposition 6.11]{P}. For any real operator systems $A,B;$ we may identify $A_c\otimes_{\max} B_c$ with $(A\otimes_{\max}B)_c,$ as complex operator systems, see \cite[Theorem 10.9]{BR}. Under this identification if $S:A\to X$ and $T:B\to Y$ then the  linear  map $(S\otimes T)_c: (A\otimes_{\max}B)_c \to (X\otimes_{\max}Y)_c$ is identified with $S_c\otimes T_c: A_c\otimes_{\max} B_c\to X_c\otimes_{\max} Y_c,$ 
 To see this note that we identify $X_c\otimes Y_c$ with $(X\otimes Y)_c$ 
 and check that they agree on their respective restrictions to $X\otimes Y,$ that is they restrict to $S\otimes T.$

Let $u\in \decR(V,W),$ and consider the map $I_X \otimes u.$ Then its complexification $(I_X\otimes u)_c,$ may be identified with $(I_X)_c\otimes u_c.$ As stated above, we know that $(I_X)_c \otimes u_c$ is decomposable, and hence $I_X\otimes u$ is decomposable. Since any operator system embeds unitally completely isometrically into its complexification; by invoking the complex version of the hypothesis \cite[Proposition 6.11]{P}, we have for any $ x\in X \otimes V \subseteq X_c \otimes V_c$ that
$$
\begin{aligned}
 \|(I_X \otimes u)(x)\|_{X\otimes_{\max} W}=\left\|((I_X)_c \otimes u_c)(x)\right\|_{X_c \otimes_{\max} W_c} &\leq\|u_c\|_{\rm dec} \, \|x\|_{X_c \otimes_{\max } V_c}\\
 &= \|u\|_{\rm dec} \,  \|x\|_{X \otimes_{\max} V}.
\end{aligned}
$$

Subject to the same identifications as in the discussion above, we also have
$$\begin{aligned}
\|I_X\otimes u\|_{{\rm Dec}(X \otimes_{\max} V, X\otimes_{\max} W) } &= \|(I_X)_c\otimes u_c\|_{{\rm Dec}(X_c \otimes_{\max} V_c, X_c\otimes_{\max} W_c) }\\
&\leq \|u_c\|_{\idec}= \|u\|_{\idec}.
\end{aligned}$$
This completes the proof. 
\end{proof}

The following is then immediate, as usual: 

\begin{corollary}
Let $u_j \in \decR(A_j, B_j),\ j=1,2$ be decomposable mappings between real operator systems. Then $u_1 \otimes u_2$ extends to a decomposable mapping in $\decR(A_1 \otimes_{\max } A_2, B_1 \otimes_{\max } B_2)$ such that
$$
\begin{aligned}
\left\|u_1 \otimes u_2\right\|_{\decR\left(A_1 \otimes_{\max } A_2, B_1 \otimes_{\max } B_2\right)} \leq\left\|u_1\right\|_{\idec}\left\|u_2\right\|_{\idec}.
\end{aligned}
$$
\end{corollary}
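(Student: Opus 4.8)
The plan is to factor $u_1 \otimes u_2$ as a composition of two maps each of the shape handled by Proposition \ref{cor:ProbChild}, and then apply Proposition \ref{prop:DecCompDec} to control the decomposable norm of the composition. Concretely, on the algebraic tensor product $A_1 \otimes A_2$ one has
$$u_1 \otimes u_2 = (u_1 \otimes I_{B_2}) \circ (I_{A_1} \otimes u_2).$$
First I would record that $I_{A_1} \otimes u_2 : A_1 \otimes_{\max} A_2 \to A_1 \otimes_{\max} B_2$ is decomposable with $\|I_{A_1} \otimes u_2\|_{\idec} \le \|u_2\|_{\idec}$; this is exactly Proposition \ref{cor:ProbChild} with $X = A_1$, $V = A_2$, $W = B_2$. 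In particular $I_{A_1} \otimes u_2$, initially defined only on the algebraic tensor product, extends (boundedly, indeed decomposably) to the max tensor product.

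Next, for the other factor I would invoke the symmetry of the real max tensor product: the flip maps $A_1 \otimes_{\max} B_2 \to B_2 \otimes_{\max} A_1$ and $B_2 \otimes_{\max} B_1 \to B_1 \otimes_{\max} B_2$ are unital complete order isomorphisms (hence decomposable and completely isometric; standard, cf.\ \cite{BR}), so applying Proposition \ref{cor:ProbChild} to $I_{B_2} \otimes u_1 : B_2 \otimes_{\max} A_1 \to B_2 \otimes_{\max} B_1$ and conjugating by the flips shows $u_1 \otimes I_{B_2} : A_1 \otimes_{\max} B_2 \to B_1 \otimes_{\max} B_2$ is decomposable with $\|u_1 \otimes I_{B_2}\|_{\idec} \le \|u_1\|_{\idec}$. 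Alternatively one could sidestep the flips entirely by first deducing the ``$u \otimes I_X$'' version of Proposition \ref{cor:ProbChild} by the same complexification argument used to prove that proposition, and then applying it directly.

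Finally, composing the two decomposable maps above and invoking Proposition \ref{prop:DecCompDec},
$$u_1 \otimes u_2 = (u_1 \otimes I_{B_2}) \circ (I_{A_1} \otimes u_2) \in \decR(A_1 \otimes_{\max} A_2,\, B_1 \otimes_{\max} B_2),$$
with
$$\|u_1 \otimes u_2\|_{\idec} \le \|u_1 \otimes I_{B_2}\|_{\idec}\, \|I_{A_1} \otimes u_2\|_{\idec} \le \|u_1\|_{\idec}\, \|u_2\|_{\idec}.$$
Since each factor extends to the relevant max tensor products, so does their composition, and it agrees with $u_1 \otimes u_2$ on the dense algebraic tensor product; this yields the asserted extension. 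The only point requiring any attention is the (routine) commutativity bookkeeping for the real max tensor product needed to cast $u_1 \otimes I_{B_2}$ as a map covered by Proposition \ref{cor:ProbChild}; I do not anticipate any substantive obstacle, which is why the statement was flagged as ``immediate'' above.
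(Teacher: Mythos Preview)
Your proof is correct and is precisely the standard ``immediate, as usual'' argument the paper has in mind: factor $u_1 \otimes u_2 = (u_1 \otimes I_{B_2}) \circ (I_{A_1} \otimes u_2)$, apply Proposition \ref{cor:ProbChild} (and its symmetric variant via the flip) to each factor, and compose via Proposition \ref{prop:DecCompDec}. The paper gives no proof beyond noting that the corollary is immediate from Proposition \ref{cor:ProbChild}, so your write-up simply makes that implication explicit.
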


\begin{proposition}
Let $u \in \decR(V, W)$ be a finite rank map between real operator spaces. For any real operator system $X$ and $x\in X\otimes V$ we have
$$
\left\|\left(I d_X\otimes u\right)(x)\right\|_{X \otimes_{\max } W} \leq\|u\|_{\idec} \, \|x\|_{X \otimes_{\min } V}.
$$
\end{proposition}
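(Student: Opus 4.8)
The plan is to derive this from the corresponding complex result by complexification, following exactly the pattern of the proof of Proposition~\ref{cor:ProbChild}. Since $u$ is finite rank, so is $u_c : V_c \to W_c$, and by Theorem~\ref{res:RealInComplex} we have $u_c \in \decC(V_c,W_c)$ with $\|u_c\|_{\idec} = \|u\|_{\idec}$. As recalled in the proof of Proposition~\ref{cor:ProbChild}, for real operator systems $A,B$ one identifies $(A \otimes_{\max} B)_c$ with $A_c \otimes_{\max} B_c$ as complex operator systems (\cite[Theorem 10.9]{BR}); one uses here in addition the analogous identification $(A \otimes_{\min} B)_c \cong A_c \otimes_{\min} B_c$ (again from \cite[Section 10]{BR}, or established by noting that $\theta_A \otimes \theta_B$ is a period-$2$ real complete order automorphism of $A_c \otimes_{\min} B_c$ whose fixed-point operator system is $A \otimes_{\min} B$, and invoking the uniqueness of reasonable complexifications as in \cite{BCK,RComp}). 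Under these identifications, as before, $(I_X \otimes u)_c$ is identified with $(I_X)_c \otimes u_c = I_{X_c} \otimes u_c$.

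Because the canonical maps $X \hookrightarrow X_c$, $V \hookrightarrow V_c$ and $W \hookrightarrow W_c$ are unital complete order embeddings, injectivity (functoriality) of the minimal operator system tensor product gives that $X \otimes_{\min} V \hookrightarrow X_c \otimes_{\min} V_c$ is a complete isometry, and likewise $X \otimes_{\max} W \hookrightarrow X_c \otimes_{\max} W_c$ is a complete isometry (the latter being built into the identification above). Note that the hypothesis only involves the algebraic tensor product $X \otimes V$, so no completion issue arises, and the complex analogue of the present statement applies to $u_c$ since $u_c$ is finite rank and decomposable. Thus for $x \in X \otimes V \subseteq X_c \otimes V_c$, applying the complex version of the result (proved by the same method as in \cite{P}) to the finite rank decomposable map $u_c$ yields
$$\|(I_X \otimes u)(x)\|_{X \otimes_{\max} W} = \|(I_{X_c} \otimes u_c)(x)\|_{X_c \otimes_{\max} W_c} \leq \|u_c\|_{\idec}\,\|x\|_{X_c \otimes_{\min} V_c} = \|u\|_{\idec}\,\|x\|_{X \otimes_{\min} V},$$
which is the asserted inequality.

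The only delicate point is the complexification identity for the minimal tensor product of real operator systems, together with the assertion that the inclusion $X \otimes_{\min} V \hookrightarrow X_c \otimes_{\min} V_c$ is completely isometric; everything else is routine transport of the complex statement across the complexification functor (which, unlike converting the complex proof line by line, avoids the usual risk of silently assuming real maps are complex linear). Given the tensor-product machinery already developed in \cite{BR} and used freely above for $\otimes_{\max}$, this should be available directly; otherwise it is obtained by the standard period-$2$-automorphism argument indicated above, exactly as for $\otimes_{\max}$ in \cite[Theorem 10.9]{BR}.
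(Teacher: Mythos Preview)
Your proposal is correct and follows essentially the same approach as the paper's proof: complexify, apply the complex result \cite[Proposition 6.13]{P} to $u_c$, and then use the isometric identifications $(X \otimes_{\max} W)_c \cong X_c \otimes_{\max} W_c$ and $(X \otimes_{\min} V)_c \cong X_c \otimes_{\min} V_c$ together with $\|u_c\|_{\idec} = \|u\|_{\idec}$. The paper handles the minimal tensor product identification by simply citing \cite[Corollary 10.4]{BR}, whereas you sketch an alternative period-$2$-automorphism justification; otherwise the arguments are the same.
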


\begin{proof}
Complexification and \cite[Proposition 6.13]{P} yield the desired outcome. Indeed, let $u\in \decR(V,W)$ be of finite rank, then $u_c \in \decR(V_c,W_c)$ is finite rank. Therefore as in the last proof, for $x \in X \otimes V$ we have
 $$\left\|\left(I d_X\otimes u\right)(x)\right\|_{X \otimes_{\max } W} = \|(I_{X_c} \otimes u_c)(x)\|_{X_c \otimes_{\max} W_c}\leq \|u_c\|_{\idec} \, \|x\|_{X_c\otimes_{\min}V_c}.$$
However $\|u_c\|_{\idec} \, \|x\|_{X_c\otimes_{\min}V_c} = \|u\|_{\idec} \, \|x\|_{X \otimes_{\min } V}$.  
This holds because 
 indeed  $X_c \otimes_{\min} V_c \cong (X \otimes_{\min} V)_c$ as complex operator systems \cite[Corollary 10.4]{BR}. 
 \end{proof}

\subsection{Tensorial  applications of real decomposable maps}

We next check the real case of a result used many times in \cite{P}.  In \cite{P} a result less general than the one ending the discussion at the end of Section \ref{sec:RealDecMaps} is used to prove the complex case of the next result, and a similar  argument works in the real case.  We give a quick alternative proof by complexification.

For $C^*$-algebras $A$ and $D$, if $E \subseteq A$ we will write 
$D \otimes_m E$ for the completion of the operator space 
structure inherited on $D \otimes E$ via the inclusion $D \otimes E \subseteq D \maxten A$.

\begin{theorem}\label{thm:ExtRDec}
Let $A$ be a unital $C^*$-algebra, $E \subseteq A$ an operator space and $M \subseteq B(H)$ a von Neumann algebra. Let $u: E \rightarrow M$ be a bounded linear map. Let $\hat{u}: M^{\prime} \otimes E \rightarrow B(H)$ be defined by $$\hat{u}\left(x^{\prime} \otimes x\right)=x^{\prime} \, u(x)\qquad   x^{\prime} \in M^{\prime},\ x \in E.$$ Then $\hat{u}$ extends to a completely bounded map on $M^{\prime} \otimes_{m} E$, which we still write as $\hat{u}$, with $\| \hat{u}: M^{\prime} \otimes_{m} E \rightarrow$ $B(H) \|_{\rm cb} \leq 1$ if and only if there is $\tilde{u} \in \decR(A, M)$ with $\|\tilde{u} \|_{\text {dec }} \leq 1$ extending $u$. In other words
$$
\left\|\hat{u}: M^{\prime} \otimes_{m} E \rightarrow B(H)\right\|_{c b}=\inf \left\{\|\tilde{u}: A \rightarrow M\|_{\idec} : 
\tilde{u}_{\mid E}=u\right\}, 
$$
and the infimum is attained.
\end{theorem}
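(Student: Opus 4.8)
The natural strategy is the by-now-standard complexification argument used throughout the paper: reduce to the complex case of this result (which is \cite[Theorem 6.20]{P}, or the bimodule Paulsen system approach sketched just before the statement) via the functoriality of complexification for all the objects involved ($M_c$ is a von Neumann algebra, $(M')_c = (M_c)'$, $E_c \subseteq A_c$, $M'\otimes_m E$ complexifies to $(M_c)'\otimes_m E_c$, and $\decR(A,M)$ embeds isometrically in $\decC(A_c,M_c)$ by Theorem \ref{res:RealInComplex}). First I would record the identifications: $M_c \subseteq B(H_c)$ is a von Neumann algebra with commutant $(M_c)' = (M')_c$ acting on $H_c \cong H \oplus iH$; the inclusion $E \subseteq A$ complexifies to $E_c \subseteq A_c$; and $(M'\otimes_m E)_c \cong (M_c)'\otimes_m E_c$ completely isometrically, since $M'\otimes_m E$ is by definition a subspace of $M'\maxten A$ and $(M'\maxten A)_c \cong (M')_c \maxten A_c$ by \cite[Theorem 10.9]{BR}, which restricts correctly on $M'\otimes E$. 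Under these identifications, given $u:E\to M$ one checks that $(\hat u)_c$ on $(M'\otimes_m E)_c$ is identified with $\widehat{u_c}: (M_c)'\otimes_m E_c \to B(H_c)$, simply by evaluating both on the dense subspace $M'\otimes E$ where they send $x'\otimes x \mapsto x'\,u(x)$.

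**Main argument.** With these identifications in hand, I would argue both directions of the `if and only if' by passing to the complexification and invoking \cite[Theorem 6.20]{P}. Suppose $\|\hat u : M'\otimes_m E \to B(H)\|_{\icb}\le 1$. Then by \cite[Proposition 1.1]{BReal} the complexification $(\hat u)_c \cong \widehat{u_c}$ has $\icb$-norm $\le 1$ on $(M_c)'\otimes_m E_c$, so the complex result gives $\tilde v \in \decC(A_c, M_c)$ with $\|\tilde v\|_{\idec}\le 1$ extending $u_c$. Here is the one point requiring a little care: $\tilde v$ need not be of the form $(\tilde u)_c$ for a real map. To fix this, compose with the real part projection $\rho_M : M_c \to M$ (which is ucp, hence $\idec$-contractive) and restrict to $A$: set $\tilde u = \rho_M \circ \tilde v_{|A}$. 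This is a real decomposable map $A\to M$ with $\|\tilde u\|_{\idec}\le \|\tilde v\|_{\idec}\le 1$ by Proposition \ref{prop:DecCompDec}, and it extends $u$ because on $E$ we have $\tilde v_{|E} = u_c$ takes values in $M \subseteq M_c$, so $\rho_M$ acts as the identity there. Conversely, if $\tilde u\in\decR(A,M)$ with $\|\tilde u\|_{\idec}\le 1$ extends $u$, then $(\tilde u)_c \in \decC(A_c,M_c)$ has $\idec$-norm $\le 1$ (Theorem \ref{res:RealInComplex}) and extends $u_c$, so the complex result gives $\|\widehat{u_c}\|_{\icb}\le 1$ on $(M_c)'\otimes_m E_c$, whence $\|\hat u\|_{\icb}\le 1$ on $M'\otimes_m E$ by the isometric embedding into the complexification. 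Rescaling gives the norm identity $\|\hat u\|_{\icb} = \inf\{\|\tilde u\|_{\idec} : \tilde u_{|E}=u\}$, and the attainment of the infimum follows because the corresponding complex infimum is attained (by \cite[Remark 1.5]{Haag}, since $M_c$ is a von Neumann algebra, and also by Remark \ref{att}) together with the real part projection argument, which does not increase the $\idec$-norm.

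**Main obstacle.** The only genuinely delicate point is the identification $(M'\otimes_m E)_c \cong (M_c)'\otimes_m E_c$ together with the verification that $(\hat u)_c$ really corresponds to $\widehat{u_c}$; everything else is routine functoriality. Specifically one must be sure that the operator space structure on $M'\otimes_m E$ — defined via the concrete inclusion into $M'\maxten A$ — complexifies compatibly, and that no subtlety intervenes at the level of completions (the completion of the complexification equals the complexification of the completion for operator spaces, but this deserves a sentence). One also wants to double-check that the complexified commutant $(M')_c$ really is the commutant of $M_c$ inside $B(H_c)$ under the identification $B(H)_c \cong B(H_c)$; this is standard real von Neumann algebra theory (see \cite{Li}), but it is the load-bearing fact. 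Once these identifications are nailed down, the proof is a two-line application of the complex theorem plus the real-part-projection trick, exactly parallel to the converse direction of Theorem \ref{res:RealInComplex}.
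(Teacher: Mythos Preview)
Your proposal is correct and follows essentially the same complexification argument as the paper: verify $(M')_c=(M_c)'$, identify $(M'\otimes_m E)_c$ with $(M_c)'\otimes_m E_c$ and $(\hat u)_c$ with $\widehat{u_c}$, then invoke the complex case \cite[Theorem 6.20]{P} for both directions, using the real-part-projection $\rho_M$ to descend from a complex extension to a real one. The paper carries out exactly these steps, including the same care about the commutant identity and the $\otimes_m$ complexification that you flag as the main obstacle.
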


\begin{proof} For any $X \subseteq A$ and $M$ as above, and any bounded linear map $u: A\to M$ let $\hat{u}$ be defined as in the hypothesis. Then $(u_c)^{\hat{}} = (\hat{u})_c.$  To see this we first recall that $(M')_c = (M_c)'$. Indeed viewing $M_c = M+iM \subseteq B(H) + i B(H)$, clearly 
$(M')_c = M' +i M' \subseteq  (M_c)'$. Conversely, if $T_1 + i T_2 \in (M_c)' \subseteq M'$ then $T_i \in M'$.  Thus 
$T_1 + i T_2 \in (M')_c = M' +i M'$.  So  $(M')_c = (M_c)'$.  

The algebraic (vector space) complexification of 
$M^{\prime} \otimes E$  is $(M')_c \otimes_{\bC} E_c
= (M_c)' \otimes E_c$ (suppressing the field subscript). Thus the complexification of 
$\hat{u}$ takes $x'_1 \otimes y_1 + i x'_2 \otimes y_2$ to $x'_1 u( y_1) + i x'_2 u( y_2)$, for $x_k' \in M', y_k \in E$.  However this agrees with  $\widehat{u_c}$  applied to the 
canonical copy of $x'_1 \otimes y_1 + i x'_2 \otimes y_2$  in $(M_c)' \otimes E_c$.  Thus 
$\widehat{u_c}$ may be identified with $(\hat{u})_c$. Since $$M' \maxten A \subseteq (M' \maxten A)_c =  (M')_c \maxten A_c = 
(M_c)' \maxten A_c$$ (see \cite[Section 10.2]{BR}),
it follows that $$M' \otimes_{m} E 
\subseteq (M')_c \otimes_{m} E_c \subseteq
(M')_c \maxten A_c$$ completely isometrically, and 
$(M' \otimes_{m} E)_c = (M')_c \otimes_{m} E_c$. 

Let $\tilde{u} \in \decR(A,M)$ be an extension of $u.$ Then $\tilde{u}_c \in \decC(A_c,M_c)$ is an extension of $u_c.$ Hence by the complex case in \cite[Theorem 6.20]{P} the result follows.  Indeed $\|\widehat{u_c} \|_{\icb} \leq 1,$ but $\widehat{u_c} = (\hat{u})_c.$ Hence $1\geq \|\widehat{u_c} \|_{\icb} = \|(\hat{u})_c\|_{\icb}= \|\hat{u} \|_{\icb}.$

Conversely, assume that $\|\hat{u} \|_{\icb}\leq 1.$ Then $\|(\hat{u})_c\|_{\icb} \leq 1.$ Since $\widehat{u_c} = (\hat{u})_c$ we have $\|\widehat{u_c} \|_{\icb} \leq 1.$ By assumption we have that there exists a map $ \stackrel{\resizebox{3 mm}{1mm}{$\sim$}}{u_c} \, \in \decC(A_c,M_c)$ with $\| \stackrel{\resizebox{3 mm}{1mm}{$\sim$}}{u_c} \|_{\idec} \leq 1$ and extending $u_c.$ Then 
$\rho_M \circ  \stackrel{\resizebox{3 mm}{1mm}{$\sim$}}{u_c}_{|A} \, \in {\rm Ball}(\decR(A,M))$ (c.f.\ the  last part of the proof of Proposition \ref{prop:DecIfuCP}), and this 
map extends $u$. 
\end{proof}

The real case of Kirchberg's theorem on decomposable maps (\cite[Theorem 14.1]{Pisbk}):

\begin{corollary} \label{cop} Let $A, B$ be real $C^*$-algebras and $u : A \to B$. 
Then $i_{B} \circ u$ is a contraction in ${\rm Dec}(A,B^{**})$
if and only if $I_D \otimes u : D \maxten A \to D \maxten B$ is a contraction for any $C^*$-algebra $D$. 
\end{corollary}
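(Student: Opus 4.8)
The plan is to derive this corollary from Theorem~\ref{thm:ExtRDec} by complexification, exactly as the complex case (\cite[Theorem 14.1]{Pisbk}) is derived from \cite[Theorem 6.20]{P}. First I would note that the complexification dictionary already established lets us translate both sides of the claimed equivalence into their complex analogues: on one hand, $i_B \circ u \in \operatorname{Ball}(\decR(A,B^{**}))$ holds if and only if $(i_B \circ u)_c = i_{B_c} \circ u_c \in \operatorname{Ball}(\decC(A_c,(B_c)^{**}))$, using Theorem~\ref{res:RealInComplex} together with the identification $(B^{**})_c \cong (B_c)^{**}$ recalled before Lemma~\ref{decdu}; on the other hand, $I_D \otimes u : D \maxten A \to D \maxten B$ is a contraction if and only if $(I_D \otimes u)_c = I_{D_c} \otimes u_c : D_c \maxten A_c \to D_c \maxten B_c$ is a contraction, using the identification $(D \maxten A)_c \cong D_c \maxten A_c$ from \cite[Section 10.2]{BR} (already invoked in the proof of Theorem~\ref{thm:ExtRDec}) and the fact that complexification is a (completely) isometric functor on morphisms.

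The second step is to quantify over $D$ correctly. For the forward direction this is immediate: every real $C^*$-algebra $D$ gives the complex algebra $D_c$, and contractivity of $I_{D_c} \otimes u_c$ for all complex $D_c$ is more than enough. For the reverse direction I would use that every complex $C^*$-algebra $\mathcal D$ is in particular a real $C^*$-algebra (forgetting the complex structure), so if $I_D \otimes u$ is contractive for all \emph{real} $D$, it is in particular contractive when $D = \mathcal D$ for $\mathcal D$ complex; combining this with the isometric identification of complex $C^*$-algebras among real ones, and with the fact that $I_{\mathcal D} \otimes u_c$ factors appropriately through $I_{\mathcal D_r} \otimes u$ (here one uses the canonical period-2 automorphisms $\theta$ to recover the complex-linear map from its real restriction, cf.\ the argument in Corollary~\ref{or}), one gets that $I_{\mathcal D} \otimes u_c$ is contractive for all complex $C^*$-algebras $\mathcal D$. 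Then \cite[Theorem 14.1]{Pisbk} applies to $u_c$ and yields $i_{B_c} \circ u_c \in \operatorname{Ball}(\decC(A_c,(B_c)^{**}))$, whence $i_B \circ u \in \operatorname{Ball}(\decR(A,B^{**}))$ by the translation in the first step.

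The anticipated main obstacle is the reverse-direction quantifier juggling: making fully rigorous the passage from ``$I_D \otimes u$ contractive for all real $D$'' to ``$I_{\mathcal D} \otimes u_c$ contractive for all complex $\mathcal D$.'' One must be careful that when a complex $C^*$-algebra $\mathcal D$ is regarded as a real algebra $\mathcal D_r$, the max-tensor norm $\|\cdot\|_{\mathcal D_r \maxten A}$ on the real tensor product relates correctly to $\|\cdot\|_{\mathcal D \maxten A_c}$ on the complex tensor product. The cleanest route is probably to avoid this by instead applying the already-proven Theorem~\ref{thm:ExtRDec}: that theorem (applied with $A$ a unital $C^*$-algebra, $M = B^{**} \subseteq B(H)$ a von Neumann algebra, and $E = A$ so $u$ itself plays the role of the restricted map, with $\hat u$ on $M' \otimes_m A$) directly gives the equivalence of $i_B\circ u \in \operatorname{Ball}(\decR(A,B^{**}))$ with a completely-bounded contractivity statement for $\hat u$ on $M' \otimes_m A$, and one then only needs the standard fact (real case of a Kirchberg-type argument, or again via complexification of \cite{Pisbk}) that this in turn is equivalent to $I_D \otimes u$ being contractive for all $D$. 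I would present the complexification proof as primary since it is shortest, and remark that the alternative via Theorem~\ref{thm:ExtRDec} also works, flagging the tensor-norm compatibility point explicitly so the careful reader can check it.
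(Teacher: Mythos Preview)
Your proposal is essentially the same complexification strategy as the paper's proof, and the forward direction matches exactly. Two points deserve comment on the converse.

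First, your ``if and only if'' in the first paragraph is not quite right as stated: a real contraction $T:X\to Y$ need not have $T_c$ contractive, since under the $c$-identification $\|T_c(x+iy)\| = \|T^{(2)}(c(x,y))\|$, so one only knows $\|T_c\| \le \|T\|_{\icb}$. The paper handles this by first replacing $D$ with $M_n(D)$ to upgrade ``$I_D\otimes u$ contractive for all $D$'' to ``$I_D\otimes u$ \emph{completely} contractive for all $D$'', and only then complexifies to obtain $I_{D_c}\otimes u_c$ contractive on $D_c\maxten A_c \to D_c\maxten B_c$. You should make that step explicit.

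Second, your diagnosis of the main obstacle (passing from real $D$'s to all complex $\mathcal D$'s) is exactly right, and your pointer to Corollary~\ref{or} is on target. The paper's concrete resolution is the noncanonical complex retraction $C \to C_c \to C$ for a complex $C^*$-algebra $C$ (here $C_c$ is the complexification of $C$ regarded as a real algebra): one composes
\[
C \maxten A_c \;\longrightarrow\; C_c \maxten A_c \;\xrightarrow{\,I_{C_c}\otimes u_c\,}\; C_c \maxten B_c \;\longrightarrow\; C \maxten B_c
\]
and checks this is $I_C\otimes u_c$. This is precisely the $\nu$-map idea from Corollary~\ref{or} tensored through, so your instinct was correct; it just needs to be written out rather than invoked by reference. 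Your alternative route via Theorem~\ref{thm:ExtRDec} is plausible but the paper does not take it.
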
 

\begin{proof}  If  $i_{B} \circ u$ is Dec-contractive then so is its 
complexification $(i_{B} \circ u)_c = i_{B_c} \circ u_c : A_c \to 
(B_c)^{**} \cong (B^{**})_c$.     By the complex case,
$I_{D_c} \otimes u_c : D_c \maxten A_c \to D_c \maxten B_c$ is a contraction for any $C^*$-algebra $D$.   Restricting, 
$I_D \otimes u : D \maxten A \to D \maxten B$ is a contraction.

For the converse, if we replace $D$ by $M_n(D)$,
it is easy to see that $I_D \otimes u : D \maxten A \to D \maxten B$ is a complete contraction.
Hence it extends to a complex complete contraction 
between complexifications
$$(I_D \otimes u)_c = I_{D_c} \otimes u_c
: D_c \maxten A_c \to D_c \maxten B_c.$$
For any complex $C^*$-algebra $C$, using the noncanonical complex sequence $C \to C_c \to C$
we obtain a complex linear contraction $v : 
C \maxten A_c \to C \maxten B_c$
as the composition $$C \maxten A_c \to C_c \maxten A_c \to C_c \maxten B_c \to C \maxten B_c.$$ Note that $v$  takes $$c \otimes x \to (c,\bar{c}) \otimes x \to (c,\bar{c}) \otimes u_c(x) \to c\otimes u_c(x),
\qquad c \in C, x \in A_c.$$
That is,  $v = I_C \otimes u_c$ is a contraction. 
Thus by the complex case, $i_{B_c} \circ u_c : A_c = (i_{B} \circ u)_c \to 
(B_c)^{**} \cong (B^{**})_c$ is a decomposable contraction. 
Hence so is its restriction to $A$.
\end{proof} 

The real case of  \cite[Corollary 14.6]{Pisbk} 
(see also \cite[Theorems 7.4, 7.6]{P}) is similar.  We recall that 
$D \otimes_m X$ is the operator space 
structure inherited on $D \otimes X$ via the inclusion $D \otimes X \subseteq D \maxten A$. This result is related to Theorem \ref{thm:ExtRDec}, but we have chosen to prove both by complexification since the techniques we use for this may be helpful elsewhere.

\begin{corollary} Let $A, B$ be real unital $C^*$-algebras, $X$ 
a subsystem of $A$, and let $u : X  \to B$ be linear.  
Then $u$ has an  extension in ${\rm Ball}(\decR(A,B^{**}))$
if and only if $I_D \otimes u : D \otimes_m X \to D \maxten B$ is a contraction for any $C^*$-algebra $D$. 
\end{corollary}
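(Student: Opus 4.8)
The plan is to combine the complexification techniques from the proofs of Theorem~\ref{thm:ExtRDec} and Corollary~\ref{cop}, reducing everything to the complex case \cite[Corollary 14.6]{Pisbk}. The structural input, exactly as in the proof of Theorem~\ref{thm:ExtRDec}, is that for a real $C^*$-algebra $D$ and an operator subsystem $X \subseteq A$ one has, via $D \maxten A \subseteq (D \maxten A)_c = D_c \maxten A_c$ (see \cite[Section 10.2]{BR}), completely isometric inclusions $D \otimes_m X \subseteq D_c \otimes_m X_c \subseteq D_c \maxten A_c$ and the identification $(D \otimes_m X)_c = D_c \otimes_m X_c$. One also uses $(B^{**})_c \cong (B_c)^{**}$ as $W^*$-algebras and the `real part projection' $\rho := \rho_{B^{**}} : (B^{**})_c \to B^{**}$, which is ucp.

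First I would prove the forward direction. If $u$ has an extension $\tilde{u} \in {\rm Ball}(\decR(A,B^{**}))$, then $\tilde{u}_c \in {\rm Ball}(\decC(A_c,(B^{**})_c)) = {\rm Ball}(\decC(A_c,(B_c)^{**}))$ extends $u_c$. By the complex case, $I_{D_c} \otimes u_c : D_c \otimes_m X_c \to D_c \maxten B_c$ is a contraction for the complexification of every real $C^*$-algebra $D$. Restricting this map to the subspace $D \otimes_m X$ of $D_c \otimes_m X_c$ and tracking elementary tensors, one checks that it agrees with $I_D \otimes u$ and carries $D \otimes_m X$ into $D \maxten B \subseteq D_c \maxten B_c$; hence $I_D \otimes u : D \otimes_m X \to D \maxten B$ is a contraction.

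For the converse, assume $I_D \otimes u : D \otimes_m X \to D \maxten B$ is a contraction for every real $C^*$-algebra $D$. Replacing $D$ by $M_n(D)$ shows $I_D \otimes u$ is a complete contraction, so (as in the proof of Corollary~\ref{cop}) it extends to the complex complete contraction $(I_D \otimes u)_c = I_{D_c} \otimes u_c : D_c \otimes_m X_c \to D_c \maxten B_c$. For any complex $C^*$-algebra $C$, the noncanonical sequence $C \to C_c \to C$ yields a complex contraction $v : C \otimes_m X_c \to C \maxten B_c$ as the composition $C \otimes_m X_c \to C_c \otimes_m X_c \to C_c \maxten B_c \to C \maxten B_c$, and tracking elementary tensors $c \otimes x$ shows $v = I_C \otimes u_c$. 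Thus $I_C \otimes u_c$ is a contraction for every complex $C^*$-algebra $C$, so by the complex case $u_c$ has an extension $w \in {\rm Ball}(\decC(A_c,(B_c)^{**}))$. Then $\rho \circ w_{|A} \in {\rm Ball}(\decR(A,B^{**}))$ (c.f.\ the last part of the proof of Theorem~\ref{thm:ExtRDec}), and since $w(x) = u_c(x) = u(x) \in B$ for $x \in X$, this map extends $u$.

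The step I expect to be the main obstacle is verifying the operator space complexification identity $(D \otimes_m X)_c = D_c \otimes_m X_c$ together with the claim that restricting $I_{D_c} \otimes u_c$ to $D \otimes_m X$ lands in $D \maxten B$ and equals $I_D \otimes u$. No genuinely new idea is needed --- this is the same bookkeeping as in the proof of Theorem~\ref{thm:ExtRDec}, with $D$ playing the role of $M'$ there --- but it must be carried out with care, since the $\otimes_m$ structure is defined through an ambient $\maxten$ and one must know the complexifications of these ambient algebras match up correctly.
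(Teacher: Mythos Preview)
Your proposal is correct and follows essentially the same route as the paper: complexify the extension for the forward direction and restrict, and for the converse pass to $M_n(D)$ to get complete contractions, complexify, use the noncanonical $C \to C_c \to C$ sequence to cover arbitrary complex $C$, apply the complex case to extend $u_c$, and compose with the real part projection into $B^{**}$. The only cosmetic difference is that the paper writes the final projection as $(\rho_B)^{\star\star}$ rather than $\rho_{B^{**}}$, but under $(B_c)^{**}\cong (B^{**})_c$ these coincide.
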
 

\begin{proof}  If  $u$ has a 
decomposable contractive extension $v \in {\rm Ball}(\decR(A,B^{**}))$ then $u_c$ has a 
decomposable contractive extension $v_c \in {\rm Ball}(\decR(A_c,B_c^{**}))$.    By the complex case \cite[Theorem 7.4]{P},
$I_{D_c} \otimes u_c : D_c \otimes X_c \subseteq D_c \maxten A_c \to D_c \maxten B_c$ is a contraction for any $C^*$-algebra $D$.   Restricting, 
$I_D \otimes u : D \otimes_m X  \to D \maxten B$ is a contraction.

The converse is mostly identical to the proof of 
Corollary \ref{cop}: if we replace $D$ by $M_n(D)$,
then $I_D \otimes u : D \otimes_m X  \to D \maxten B$ is a complete contraction which 
 extends to a complex complete contraction 
$$(I_D \otimes u)_c = I_{D_c} \otimes u_c
: (D \otimes X)_c = D_c \otimes_m X_c \to D_c \maxten B_c.$$
And for any complex $C^*$-algebra $C$,
we obtain a complex linear contraction $v = I_C \otimes u_c: 
C \otimes_m X_c \subseteq C \maxten A_c \to C \maxten B_c$. 
Thus by the complex case, $u_c$ has an extension 
$v :  A_c  \to 
(B_c)^{**} \cong (B^{**})_c$ which is a decomposable contraction. 
Hence $(\rho_B)^{\star \star} \circ v_{|A}$ is a decomposable contraction into $B^{**}$, and 
$(\rho_B)^{\star \star}(v(x)) = (\rho_B)^{\star \star}(u(x)) = u(x)$ for $x \in X$. 
\end{proof}

{\bf Remarks.} 1)\ Similarly the results in  7.10 and Theorem  10.14 in \cite{P} seem to work the same (with the same proof) in the real case, but using our real versions of some basic results on  decomposable maps used in  Theorem 10.14.  

\medskip

2)\ Similarly, Theorem 7.29--7.30  in \cite{P} is also valid in the real case, with essentially the same proof but using our real versions of some basic results on  decomposable maps used in \cite[Theorem 7.29--7.30]{P}.  The biggest issue one encounters here is that the proof that (i)' implies (i) in Theorem 7.29 uses \cite[Corollary 7.16]{P} which in turn uses the fact
that any complex unital $C^*$-algebra is a quotient of $C^*(F)$ for a free group $F$, which is not true in the real case.  To circumvent this, we prove that 
(i)' implies (i)  by complexification.
Using facts in \cite[Section 10.2]{BR} the canonical $*$-monomorphism in (i)' complexifies to 
 the canonical $*$-monomorphism $\mathcal{C} \maxten D_c \to 
\mathcal{C} \maxten A_c$.  Thus by the complex case
we get the  canonical $*$-monomorphism $B_c \maxten D_c \to 
B_c \maxten A_c$ for any $C^*$-algebra $B$.
Restricting, we get the  canonical $*$-monomorphism $B \maxten D \to B \maxten A$.  That is, we have (i).
The proof also  uses \cite[Corollary 7.27]{P}, but this was checked in the real case in \cite{BR}.

\subsection{Dec characterization of injective von Neumann algebra and of WEP} \label{secIWEP}

\begin{theorem} \label{hinj} Let $M$ be a real von Neumann algebra. Then $M$ is injective if and only if  there is a constant $c > 0$ such that $\| u \|_{\rm dec} \leq c \| u \|_{\rm cb}$ for all
$n \in \bN$ and all linear $u : l^\infty_n \to M$.  \end{theorem}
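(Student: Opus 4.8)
The plan is to derive both implications from the complex case of this theorem (due to Haagerup \cite{Haag}; see also \cite{P}), via the now-familiar dictionary between $M$ and its complexification: in particular that $M$ is injective if and only if $M_c$ is injective (one direction being trivial since $M$ is ucp-complemented in $M_c$ via the real part projection $\rho_M$, the other in \cite{BCK}), and that $(\ell^\infty_n(\bR))_c = \ell^\infty_n(\bC)$. The forward direction is then immediate: an injective real von Neumann algebra is in particular an injective real $C^*$-algebra, so Proposition \ref{prop:InjectiveImpliesCB=Dec} already gives $\decR(\ell^\infty_n(\bR),M)=\cbR(\ell^\infty_n(\bR),M)$ with equal norms, and $c=1$ works.

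For the converse I would assume $\|u\|_{\rm dec}\le c\,\|u\|_{\rm cb}$ for all $n$ and all real linear $u:\ell^\infty_n(\bR)\to M$, and aim to verify the complex Dec/cb comparison for $M_c$; by the complex case this will force $M_c$, hence $M$, to be injective. So let $w:\ell^\infty_n(\bC)\to M_c$ be complex linear, identify $\ell^\infty_n(\bC)=(\ell^\infty_n(\bR))_c$, and let $\rho_M,\sigma_M:M_c\to M$ be the canonical projections, so $z=\rho_M(z)+i\sigma_M(z)$. Put $u_1=\rho_M\circ w|_{\ell^\infty_n(\bR)}$ and $u_2=\sigma_M\circ w|_{\ell^\infty_n(\bR)}$, real linear maps $\ell^\infty_n(\bR)\to M$. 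Since $\rho_M$ is ucp and $\sigma_M$ is a complete contraction (indeed $(\rho_M,\rho_M)\in\mathfrak P(\sigma_M)$, because $c(\rho_M,\sigma_M)$ is the ucp identity map of $M_c$), we get $\|u_k\|_{\rm cb}\le\|w\|_{\rm cb}$, hence $\|u_k\|_{\rm dec}\le c\,\|w\|_{\rm cb}$ by hypothesis and $\|(u_k)_c\|_{\rm dec}\le c\,\|w\|_{\rm cb}$ by Theorem \ref{res:RealInComplex}. A short check (both sides are complex linear and agree on $\ell^\infty_n(\bR)$, hence on $\ell^\infty_n(\bC)$) gives
$$ w \;=\; (u_1)_c \;+\; \jmath\circ(u_2)_c, $$
where $\jmath:M_c\to M_c$ is multiplication by $i$. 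Now $\jmath$ is decomposable with $\|\jmath\|_{\rm dec}\le 1$: taking witnesses $S_1=S_2=\mathrm{id}_{M_c}$, the associated map $z\mapsto\begin{bmatrix}z&iz\\-iz&z\end{bmatrix}$ is $z\mapsto a z a^*$ for $a=\begin{bmatrix}1\\-i\end{bmatrix}$, hence completely positive (alternatively invoke the complex analogue of Proposition \ref{prop:verifyRuan2}, i.e.\ \cite[Proposition 6.10]{P}, with $\alpha=-i$, $\beta=1$). Subadditivity of $\|\cdot\|_{\rm dec}$ together with Proposition \ref{prop:DecCompDec} then yields $\|w\|_{\rm dec}\le\|(u_1)_c\|_{\rm dec}+\|\jmath\|_{\rm dec}\,\|(u_2)_c\|_{\rm dec}\le 2c\,\|w\|_{\rm cb}$; by Corollary \ref{or} these dec (and cb) norms agree with the complex ones, so $c'=2c$ witnesses the complex comparison and the complex theorem applies.

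The main obstacle is precisely this converse. The hypothesis is about real maps into $M$, whereas invoking the complex theorem requires a Dec/cb estimate for complex linear maps into $M_c$, and such a $w$ is not in general the complexification of a real map. The device is to split $w$ into its `real' and `imaginary part' maps $(u_1)_c$ and $\jmath\circ(u_2)_c$ — each of which does arise from a real map into $M$ after composing with $\rho_M$ or $\sigma_M$ — and then to control the recombination, which costs only the harmless factor $\|\jmath\|_{\rm dec}\le 1$ thanks to the decomposability of multiplication by $i$ on $M_c$. Everything else (the complexification identities for $\ell^\infty_n$ and for injectivity, and the behaviour of $\|\cdot\|_{\rm dec}$ under the maps in play) is routine given the results already established above.
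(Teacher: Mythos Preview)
Your proof is correct and follows essentially the same approach as the paper: both directions match, and for the converse you decompose a complex $w:\ell^\infty_n(\bC)\to M_c$ as $(u_1)_c+i(u_2)_c$ with $u_k:\ell^\infty_n(\bR)\to M$ real completely contractive, then invoke the hypothesis on each piece to obtain the complex Dec/cb comparison with constant $2c$ and apply Haagerup's complex theorem. The only difference is cosmetic: the paper obtains this decomposition by quoting the identification $\cbC(\ell^\infty_n(\bC),M_c)=(\cbR(\ell^\infty_n(\bR),M))_c$, whereas you construct it explicitly via $\rho_M,\sigma_M$.
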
 

\begin{proof}  Suppose that there exists such a constant.
 Since the complexification of 
${\rm CB}_{\bR}(A,M)$ is ${\rm CB}_{\bC}(A_c,M_c)$ 
(see \cite[Theorem 2.3]{BCK}, every complete contraction $u \in {\rm CB}_{\bC}(l^\infty_n(\bC),M_c)$
is of form $v_c + i w_c$ for complete contractions $v, w \in 
 {\rm CB}_{\bR}(l^\infty_n(\bR),M)$.
 Thus $\| u \|_{\rm dec} \leq \| v \|_{\rm dec} + \| w \|_{\rm dec} \leq 2 c$. 
  Haagerup showed in \cite{Haag} that  this implies $M_c$ is injective, hence so is $M$ by \cite{BCK}. 
 
 The converse holds with $c = 1$ by Proposition \ref{prop:InjectiveImpliesCB=Dec}.  \end{proof}

 Haagerup proved in the complex case that a $C^*$-algebra $A$ has WEP if and only if  $\decR(l^\infty_n(\bC),A) = \cbR(l^\infty_n,A)$  isometrically for all
 $n \in \bN$ (see \cite{P} for Pisier's proof of this).  We do not yet have the real case of this.
 We present some partial results, and a discussion of this point.

\begin{corollary}
A real $C^*$-algebra $A$ has WEP if and only if  $\decR(l^\infty_3,A) = \cbR(l^\infty_3,A)$ completely isometrically. 
\end{corollary}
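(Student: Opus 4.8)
The plan is to deduce this from the known complex case by the complexification machinery used throughout the paper. Two facts go into it. First, $A$ has WEP if and only if $A_c$ does: this is immediate if one takes it as the definition of WEP for real $C^*$-algebras, and otherwise it follows by complexifying a weak expectation $\mathcal{E}: B(H) \to A^{**}$ to $\mathcal{E}_c: B(H_c) \to (A^{**})_c \cong (A_c)^{**}$, and conversely by composing a weak expectation of $A_c$ with the real part projection $(A^{**})_c \to A^{**}$ and restricting to $B(H) \subseteq B(H_c)$, exactly as in the converse half of Theorem~\ref{res:RealInComplex}. Second, the complex case of the present statement: for a complex $C^*$-algebra $B$, $B$ has WEP if and only if $\decC(\ell^\infty_3(\bC), B) = \cbC(\ell^\infty_3(\bC), B)$ completely isometrically (the $\ell^\infty_3$ form of Haagerup's decomposable characterization of WEP; see \cite{P}).

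The bridge between the real and complex sides is the following assertion, which I would record first: for real operator systems $V,W$, one has $\decR(V,W) = \cbR(V,W)$ completely isometrically if and only if $\decC(V_c,W_c) = \cbC(V_c,W_c)$ completely isometrically. For the forward implication, apply the operator space complexification functor $T \mapsto T_c$ to the surjective complete isometry $\decR(V,W) \to \cbR(V,W)$ (the inclusion, completely isometric by hypothesis): using the identifications $\decR(V,W)_c \cong \decC(V_c,W_c)$ of Theorem~\ref{thm:RealIsComplex_OS} and $\cbR(V,W)_c \cong \cbC(V_c,W_c)$ of \cite[Proposition 1.1]{BReal}, this yields a surjective complete isometry $\decC(V_c,W_c) \to \cbC(V_c,W_c)$, and since both identifications are given on maps by the same prescription ($u + iv \mapsto u_c + i v_c$) this map is just the canonical inclusion. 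For the converse, take $u \in \cbR(V,W)$; then $u_c \in \cbC(V_c,W_c) = \decC(V_c,W_c)$, so $u \in \decR(V,W)$ with $\|u\|_{\idec} = \|u_c\|_{\idec} = \|u_c\|_{\icb} = \|u\|_{\icb}$ by Theorem~\ref{res:RealInComplex} together with Corollary~\ref{prop:cb<dec}; replacing $W$ by $M_k(W)$ (using $M_k(W)_c = M_k(W_c)$, the identifications $M_k(\decR(V,W)) = \decR(V,M_k(W))$ and $M_k(\cbR(V,W)) = \cbR(V,M_k(W))$, and the fact that the complex hypothesis is a \emph{complete} isometry) promotes this isometry to a complete isometry.

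Assembling the pieces, and using $\ell^\infty_3(\bR)_c = \ell^\infty_3(\bC)$, we obtain the chain: $A$ has WEP $\iff$ $A_c$ has WEP $\iff$ $\decC(\ell^\infty_3(\bC),A_c) = \cbC(\ell^\infty_3(\bC),A_c)$ completely isometrically $\iff$ $\decR(\ell^\infty_3(\bR),A) = \cbR(\ell^\infty_3(\bR),A)$ completely isometrically, which is the corollary. The step I expect to require the most care is the compatibility of the two complexification identifications used in the bridge lemma, i.e.\ that under them the inclusion $\decR(V,W) \hookrightarrow \cbR(V,W)$ really does complexify to the inclusion $\decC(V_c,W_c) \hookrightarrow \cbC(V_c,W_c)$. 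This comes down to the fact that in both pictures the real space is the fixed-point set of the same conjugation $\varphi \mapsto \theta_W \circ \varphi \circ \theta_{V_c}$ on maps, and that a decomposable map $V_c \to W_c$ is of the form $u_c$ with $u$ real decomposable precisely when it is fixed by this conjugation (Theorem~\ref{res:RealInComplex}); everything else is routine functoriality of complexification as developed in \cite{BR,BReal}.
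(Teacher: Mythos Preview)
Your proof is correct and follows essentially the same complexification strategy as the paper: reduce to the complex case via $A$ has WEP $\iff$ $A_c$ has WEP, together with the identifications $\decR(V,W)_c\cong\decC(V_c,W_c)$ and $\cbR(V,W)_c\cong\cbC(V_c,W_c)$. The paper argues slightly more directly at each matrix level (replacing $A$ by $M_n(A)$ and using that WEP passes to $M_n(A)$), while you package this as a general bridge lemma; the content is the same. One minor point: the $\ell^\infty_3$ characterization in the complex case is due to Junge and Le Merdy \cite[Proposition 3.5]{JLM}, not Haagerup (whose result involves all $\ell^\infty_n$).
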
 

\begin{proof}  The complex case is due to Junge  and Le Merdy \cite[Proposition 3.5]{JLM}.   Their proof shows that a complex $C^*$-algebra $B$ has WEP if and only if
$\| u \|_{\rm cb} =\| u \|_{\rm dec}$ for all $n \in \bN$ and $u : l^\infty_3 \to M_n(B)$.
Suppose that $u : l^\infty_3(\bR) \to M_n(A)$ for a real $C^*$-algebra $A$.
If $A$ has WEP then so does $M_n(A)$, and so $M_n(A)_c =
M_n(A_c)$ has complex WEP \cite{BR}.
Thus 
$$\| u \|_{\rm cb} = \| u_c \|_{\rm cb} =\| u_c \|_{\rm dec} =\| u \|_{\rm dec}.$$
The converse is similar.
If $\decR(l^\infty_3,A) = \cbR(l^\infty_3,A)$ completely isometrically, then by complexifying, we see that $${\rm Dec}_{\bC}(l^\infty_3(\bC),A_c) = \cbC(l^\infty_3(\bC),A_c).$$  So $A_c$ has WEP by the complex case, and hence so has $A$ by e.g.\ \cite[Proposition 4.1]{RComp}.  
\end{proof}

The following is the real case of part of Theorem 23.2 in \cite{P},
and has the same proof but using the real versions checked earlier of some basic results on decomposable maps.

\begin{proposition} \label{P232}
Let $B$ be a $C^*$-algebra. Let $\iota: A \rightarrow B$ be the inclusion mapping from a $C^*$-subalgebra $A \subseteq B$. The following are equivalent: \begin{enumerate}
\item [{\rm (i)}]  For any $n \geq 1$ and any $u: \ell_{\infty}^n \rightarrow A$ we have
$$
\|u\|_{\decR\left(\ell_{\infty}^n, A\right)}=\|\iota \circ u\|_{\decR\left(\ell_{\infty}^n, B\right)} .
$$
\item [{\rm (ii)}]  For any $n \geq 1$ and any $v: \ell_{\infty}^n \rightarrow A^{* *}$ we have
$$
\|v\|_{\decR\left(\ell_{\infty}^n, A^{* *}\right)}=\left\|\iota^{\star \star} \circ v\right\|_{\decR\left(\ell_{\infty}^n, B^{* *}\right)}
$$
\end{enumerate}
\end{proposition}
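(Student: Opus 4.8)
The plan is to port the proof of the complex statement \cite[Theorem 23.2]{P}, whose engine is the biduality identification $\decC(\ell_{\infty}^n,A^{**})\cong\decC(\ell_{\infty}^n,A)^{**}$; in the real case this is precisely Corollary \ref{nbid}. The key observation is that both (i) and (ii) are ``$\Lambda$-is-an-isometric-embedding'' statements: (i) for the contraction $\Lambda:\decR(\ell_{\infty}^n,A)\to\decR(\ell_{\infty}^n,B)$, $u\mapsto\iota\circ u$, and (ii) for its bi-transpose $\Lambda^{\star\star}$. Since a bounded operator between Banach spaces is an isometric embedding if and only if its bi-transpose is (one direction by restriction to the canonical copy of the space, the other because then $\Lambda^{\star}$ is a metric surjection by Hahn--Banach), the whole proposition will reduce to matching $\Lambda^{\star\star}$ with the map $v\mapsto\iota^{\star\star}\circ v$ under the identifications of Corollary \ref{nbid}.

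In order, I would first record that composing with any of the $*$-monomorphisms $\iota$, $i_A:A\hookrightarrow A^{**}$, $i_B:B\hookrightarrow B^{**}$, $\iota^{\star\star}:A^{**}\hookrightarrow B^{**}$ never increases $\|\cdot\|_{\idec}$, since a witness of decomposability for a map into $M_2$ of the smaller algebra is a fortiori one in $M_2$ of the larger; in particular $\Lambda$ is contractive and $\|i_A\circ u\|_{\idec}\le\|u\|_{\idec}$. Next I would invoke Corollary \ref{nbid} in the sharpened form that the identification $\decR(\ell_{\infty}^n,A^{**})\cong\decR(\ell_{\infty}^n,A)^{**}$ (and likewise for $B$) is natural in the $C^{*}$-algebra and restricts on $\decR(\ell_{\infty}^n,A)$ to post-composition with $i_A$; equivalently, $\|u\|_{\idec}=\|i_A\circ u\|_{\idec}$, and under the identification the bi-transpose $\Lambda^{\star\star}$ is the map $v\mapsto\iota^{\star\star}\circ v$ on $\decR(\ell_{\infty}^n,A^{**})$. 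Granting this, (i)$\Leftrightarrow$(ii) is immediate from the abstract fact above; unwound, (ii)$\Rightarrow$(i) reads $\|u\|_{\idec}=\|i_A\circ u\|_{\idec}=\|\iota^{\star\star}\circ i_A\circ u\|_{\idec}=\|i_B\circ\iota\circ u\|_{\idec}=\|\iota\circ u\|_{\idec}$ (using $\iota^{\star\star}i_A=i_B\iota$), and (i)$\Rightarrow$(ii) is the bi-transpose-of-an-isometric-embedding argument applied to $\Lambda$.

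The only step requiring genuine work, and the one I expect to be the main obstacle, is the sharpened form of Corollary \ref{nbid}: that the bidual identification is natural in the target algebra and carries the canonical embedding into the bidual onto post-composition with $i_A$ — note that even the inequality $\|u\|_{\idec}\le\|i_A\circ u\|_{\idec}$ is not formal, since $A$ need not be contractively complemented in $A^{**}$. I would get this by revisiting the proof of \cite[Theorem 8.25]{P} in its real form (the duality used there is manifestly functorial in the target), or, failing that, by a direct argument: $v\mapsto\iota^{\star\star}\circ v$ and $\Lambda^{\star\star}$ are both weak$^{*}$-continuous (the former since $\iota^{\star\star}$, being a bi-transpose, is weak$^{*}$-to-weak$^{*}$ continuous) and they agree on the weak$^{*}$-dense, by Goldstine, canonical image of $\decR(\ell_{\infty}^n,A)$, on which each restricts to $\Lambda$; hence they coincide, and the same bookkeeping yields $\|u\|_{\idec}=\|i_A\circ u\|_{\idec}$. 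No complexification is needed here; following the complex proof directly is cleaner than passing through $A_c$ and $B_c$.
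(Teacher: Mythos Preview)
Your proposal is correct and takes essentially the same approach as the paper, which simply asserts that the complex proof of \cite[Theorem~23.2]{P} goes through verbatim once the real versions of the basic decomposable-map results (in particular Corollary~\ref{nbid}) are in hand. You have correctly identified and addressed the one point requiring care---that the isometric identification $\decR(\ell_\infty^n,A^{**})\cong\decR(\ell_\infty^n,A)^{**}$ must be natural in $A$ and carry the canonical bidual embedding to post-composition with $i_A$---and your weak$^*$-density argument for this is sound.
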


\begin{corollary} If $A$ is a 
real or complex $C^*$-algebra, then 
$A$ is nuclear if and only $A$ is  locally reflexive and there is a constant $c > 0$ such that $\| u \|_{\rm dec} \leq c \| u \|_{\rm cb}$ for all
$n$ and all linear $u : l^\infty_n \to A$.   
\end{corollary}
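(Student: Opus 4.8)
The plan is to recognize the stated spectral condition as a reformulation of the weak expectation property, and then to invoke the classical fact that a $C^*$-algebra is nuclear precisely when it has the WEP and is locally reflexive. So I would reduce the corollary to the following assertion: for a locally reflexive $C^*$-algebra $A$, there is a constant $c>0$ with $\|u\|_{\rm dec}\le c\,\|u\|_{\rm cb}$ for all $n$ and all linear $u:\ell^\infty_n\to A$ if and only if $A^{**}$ is injective (equivalently, $A$ has the WEP). Granting this, the corollary follows from ``$A$ nuclear $\iff$ $A$ has WEP and $A$ is locally reflexive'' (see \cite{P,Pisbk}), together with ``nuclear $\Rightarrow$ exact $\Rightarrow$ locally reflexive'' and ``$A^{**}$ injective $\iff$ $A$ has WEP''. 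In the real case these classical $C^*$-facts transfer by complexification: $A$ is nuclear (resp.\ has WEP, resp.\ is locally reflexive) if and only if $A_c$ is, and $\|u\|_{\rm dec}=\|u_c\|_{\rm dec}$, $\|u\|_{\rm cb}=\|u_c\|_{\rm cb}$, $(\ell^\infty_n(\bR))_c=\ell^\infty_n(\bC)$ (cf.\ \cite{BR,RComp,BCK}).

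For the nontrivial implication, suppose the constant $c$ exists. Then $\decR(\ell^\infty_n,A)=\cbR(\ell^\infty_n,A)$ as sets, with $\|\cdot\|_{\rm cb}\le\|\cdot\|_{\rm dec}\le c\,\|\cdot\|_{\rm cb}$, so the identity is a Banach space isomorphism with constants $1$ and $c$. Passing to second duals and using the canonical isometry $\decR(\ell^\infty_n,A^{**})\cong\decR(\ell^\infty_n,A)^{**}$ of Corollary \ref{nbid}, together with the standard canonical isometry $\cbR(\ell^\infty_n,A^{**})\cong\cbR(\ell^\infty_n,A)^{**}$ (the $\cbR$-analogue, used already in the proof of Corollary \ref{nbid}), I would check that the bidual of the dec--cb inclusion is, under these identifications, the dec--cb inclusion for $A^{**}$: it is weak* continuous and agrees with that inclusion on the weak* dense copy of $\cbR(\ell^\infty_n,A)$. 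This yields $\|v\|_{\rm dec}\le c\,\|v\|_{\rm cb}$ for all $n$ and all $v:\ell^\infty_n\to A^{**}$. Applying Theorem \ref{hinj} to the von Neumann algebra $M=A^{**}$ then gives that $A^{**}$ is injective, i.e.\ $A$ has the WEP.

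Conversely, if $A$ is nuclear then $A^{**}$ is injective and $A$ is locally reflexive (the latter since nuclear implies exact implies locally reflexive). By Proposition \ref{prop:InjectiveImpliesCB=Dec}, $\decR(\ell^\infty_n,A^{**})=\cbR(\ell^\infty_n,A^{**})$ isometrically; composing with the complete order embedding $A\hookrightarrow A^{**}$ and using the isometric embedding $\decR(\ell^\infty_n,A)\hookrightarrow\decR(\ell^\infty_n,A^{**})$ furnished by Corollary \ref{nbid}, one obtains $\|u\|_{\rm dec}=\|u\|_{\rm cb}$ for every $u:\ell^\infty_n\to A$, so $c=1$ works. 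This completes the plan.

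I expect the main obstacle to be the second-dual bookkeeping in the second paragraph: one must verify that the isometries of Corollary \ref{nbid} and of the analogous $\cbR$ bidual identity intertwine the natural ``$\decR\subseteq\cbR$'' inclusions, so that a bounded inverse of this inclusion for $A$ genuinely produces one for $A^{**}$ with the same constant. Once this is settled, everything reduces cleanly to Theorem \ref{hinj} and to the classical (and, by complexification, real) identification of nuclearity with WEP plus local reflexivity.
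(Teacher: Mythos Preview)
Your overall strategy coincides with the paper's: pass the dec--cb comparison on $\ell^\infty_n\to A$ up to $\ell^\infty_n\to A^{**}$ via the bidual identification of Corollary~\ref{nbid}, then invoke Theorem~\ref{hinj}. The paper does exactly this, using local reflexivity (the real case of \cite[Proposition~8.28]{P}) to approximate a cb-contraction $v:\ell^\infty_n\to A^{**}$ pointwise weak* by cb-contractions $v_t:\ell^\infty_n\to A$, which then have $\|v_t\|_{\rm dec}\le c$, and then using Corollary~\ref{nbid} to pass to the limit.

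However, there is a genuine gap in your version of this step. You assert a ``standard canonical isometry'' $\cbR(\ell^\infty_n,A^{**})\cong\cbR(\ell^\infty_n,A)^{**}$ and say it is the $\cbR$-analogue of Corollary~\ref{nbid}. This isometry is \emph{not} automatic: for finite-dimensional $E$ one always has $\|\cdot\|_{\cbR(E,A^{**})}\le\|\cdot\|_{\cbR(E,A)^{**}}$, but equality is precisely (a form of) local reflexivity of $A$. It is not used in the proof of Corollary~\ref{nbid}, which is specific to Dec. A quick sanity check shows your argument cannot work without this hypothesis: $A=B(H)$ (infinite-dimensional $H$) is injective, so by Proposition~\ref{prop:InjectiveImpliesCB=Dec} the constant condition holds with $c=1$, yet $B(H)$ is not nuclear, hence $B(H)^{**}$ is not injective. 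Your bidual bookkeeping, if it went through unconditionally, would force $B(H)^{**}$ to be injective. So local reflexivity is not mere bookkeeping here---it is exactly the substantive ingredient, and the paper invokes it explicitly.

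There is also a smaller slip in your reduction: ``$A^{**}$ injective'' is equivalent to ``$A$ nuclear'', not to ``$A$ has WEP'' (WEP is strictly weaker; $B(H)$ has WEP but $B(H)^{**}$ is not injective). So the detour through ``nuclear $\Leftrightarrow$ WEP${}+{}$LR'' is unnecessary and rests on a false equivalence; once you have $A^{**}$ injective you are done directly. With these two corrections your argument becomes the paper's.
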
 

\begin{proof} 
Clearly if $A$ is nuclear then $A$ is  locally reflexive (one may see this by passing to the complexification and appealing to \cite[Proposition 4.3]{RComp}), and has WEP \cite{BR}.  Thus 
the condition involving the constant $c$ holds with $c = 1$, just as in the complex case \cite[Corollary 23.5]{P}. 
Indeed if $A \subset B(H)$ and $P : B(H)^{**} \to A^{**}$ is a contractive cp projection and $i^{\star \star} : A^{**} \to B(H)^{**}$ is the inclusion then $\| u \|_{\rm dec} = \| i^{\star \star} \circ u \|_{\rm dec}$ by Proposition \ref{prop:DecCompDec}.   So $\| u \|_{\rm dec} = \| u \|_{\rm cb}$ by Proposition \ref{P232}. 
Conversely, suppose that the condition involving the constant $c$ holds.  
Suppose that $v :  \ell^\infty_n \to A^{**}$ is completely contractive.
 We may approximate  $v$ point weak* if $A$ is locally reflexive
 by a net of completely contractive $v_t$ in $\cbR(\ell^\infty_n , A)$
 by the real case of \cite[Proposition 8.28]{P}.
 The  $v_t$  have dec norm $\leq C$, and $\decR(\ell^\infty_n , A^{**})$ is the  bidual space of 
 $\decR(\ell^\infty_n , A)$ by the real case of \cite[Theorem 8.25]{P}. So the net has a weak* convergent subnet
 with dec norm $\leq C$, and its limit must be $v$.
 Thus  $\| v \|_{\rm dec} \leq C$.  Thus $A^{**}$ is injective by Theorem \ref{hinj},
 so that $A$ is nuclear (again one may see this by passing to the complexification, appealing to
 facts in \cite{BR}).  \end{proof}

{\bf Remark.} 
In the last corollary, the last condition alone ought to actually characterize when a $C^*$-algebra has WEP, but this seems open even in the complex case (Gilles Pisier indicated that this was the case on an  email enquiry in early 2025). 

The real versions of the important results 23.2--23.5 in \cite{P} are probably true, but attempting to follow parts of the complex proofs there seems to be temporarily blocked by gaps  in 
the existing real theory of von Neumann algebras.
For example, we do not know if there is a nice decomposition 
of  real  von Neumann algebras similar to the fact that a complex von Neumann algebra is 
a direct sum of von Neumann algebras of the form 
$B(H) \otimes \mathcal{N}$ where $\mathcal{N}$ is a $\sigma$-finite von Neumann algebra?  This is probably not a difficult question, and it is an important one. 

For a $C^*$-algebra $A$ and linear $u : l^\infty_n \to A$, if  $u(e_k) = x_k$ for $k = 1, \cdots , n,$ then we have $$\| u \|_{\rm cb} = \| u_c\|_{\rm cb} 
= \| \sum_{k=1}^n \,
 U_k \otimes x_k \|_{C^*_{\bC}(F_n) \otimes_{\rm min} A_c} = \| \sum_{k=1}^n \,
 U_k \otimes x_k \|_{C^*_{\bR}(F_n) \otimes_{\rm min} A},$$ 
 where in the third equality we used (3.7) from  \cite{P}. Here the $U_k$  are the 
free generators of the full free group $C^*$-algebra. 
 On the other hand, by \cite[Lemma 6.28]{P} we have 
$$\| u \|_{\rm dec} = \| u_c \|_{\rm dec} = \| \sum_k \, U_k \otimes u_c(e_k) \|_{C^*_{\bC}(F) \maxten A_c}.$$
However the latter equals $\| \sum_k \, U_k \otimes u(e_k) \|_{C^*_{\bR}(F) \maxten A}$,  since it is shown in  
\cite{BR} that 
$(C^*_{\bR}(F) \maxten A)_c = C^*_{\bC}(F) \maxten A_c$. 
 Indeed $\| u \|_{\rm dec}$ is a supremum of $\| \sum_{k=1}^n \, u_k \pi(x_k) \|$ over all representations
 $\pi : A \to B(H)$ and all unitaries $u_i \in \pi(A)'$. (See the proof of \cite[Lemma 6.28]{P}; the real case of some of the ingredients there
 are in our Section \ref{sec:deltaNorm}.)  
Also $$\| u \|_{\rm dec} = \inf \{\|  \sum_{k=1}^n \, a_k a_k^* \|^{\frac{1}{2}} \, \|  \sum_{k=1}^n \, b_k^* b_k \|^{\frac{1}{2}} : x_k = a_k b_k \}$$ by Example  \ref{P625}. 
We can thus rephrase the question above in terms of whether
WEP is characterized by the minimal and maximal norms agreeing on $\cS_n \otimes A$, where $\cS_n$ is the span of the 
free generators $U_k$.  This is related to results of Kavruk \cite{Kavrukthesis} (with Paulsen and other collaborators), however we do not have these equalities at the  matrix levels as they do.  

We expect that there is also a real version of Haagerup's other  deep characterization of WEP in \cite[Theorem 23.7 (i)--(iii)]{P}.   The equivalence of (i) and (iv) there does hold:

\begin{theorem} A 
real $C^*$-algebra $A$ has WEP if and only if the canonical map $A \to A^{**}$ factors completely boundedly through $B(H)$ for some $H$.  A real $C^*$-subalgebra $A$ of $B(H)$ has WEP if it is
completely boundedly complemented in $B(H)$.  \end{theorem}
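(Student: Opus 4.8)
The plan is to prove both assertions by complexification, reducing to the known complex characterizations of WEP.

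\medskip

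\textbf{Proof outline.} For the first assertion, recall from \cite{BR} that a real $C^*$-algebra $A$ has WEP if and only if its complexification $A_c$ has (complex) WEP. In the complex case, Haagerup's characterization (see \cite[Theorem 23.7]{P}, the equivalence of (i) and (iv) there) says that a complex $C^*$-algebra $B$ has WEP if and only if the canonical map $B \to B^{**}$ factors completely boundedly through some $B(K)$. So first I would show that the canonical map $A \to A^{**}$ factors completely boundedly through $B(H)$ if and only if the canonical complex map $A_c \to (A_c)^{**} \cong (A^{**})_c$ factors completely boundedly through $B(H)_c \cong B(H_c)$. The forward direction is immediate by complexifying the two factor maps and using that complexification is functorial and completely isometric on CB spaces (as recalled in the excerpt, and \cite[Proposition 1.1]{BReal}); one also uses that $(A^{**})_c \cong (A_c)^{**}$ as recalled before Lemma \ref{decdu}. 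For the converse, given a complex factorization $A_c \xrightarrow{S} B(K) \xrightarrow{T} (A_c)^{**}$ through a complex Hilbert space $K$, one restricts $S$ to $A$ (viewing $K$ as a real Hilbert space $K_r$, so that $B(K) \subseteq B_{\bR}(K_r)$) and post-composes $T$ with the real part projection $\rho_A^{\star\star} : (A_c)^{**} \to A^{**}$; since $\rho_A$ is ucp (hence $\rho_A^{\star\star}$ is completely contractive) and $\rho_A \circ \kappa_A = \mathrm{id}_A$, the composition $\rho_A^{\star\star} \circ T \circ S|_A$ equals the canonical map $A \to A^{**}$ and is completely bounded, factoring through $B_{\bR}(K_r)$. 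This establishes the first equivalence.

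\medskip

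For the second assertion, suppose the real $C^*$-subalgebra $A \subseteq B(H)$ is completely boundedly complemented in $B(H)$, say by a completely bounded projection $P : B(H) \to A$. Then the canonical map $A \to A^{**}$ factors as $A \hookrightarrow B(H) \xrightarrow{P} A \hookrightarrow A^{**}$, which is a completely bounded factorization through $B(H)$; by the first assertion $A$ has WEP. (Alternatively one complexifies: $P_c : B(H)_c \to A_c$ is a completely bounded projection of $B(H_c)$ onto $A_c$, so $A_c$ is cb-complemented in $B(H_c)$, hence has complex WEP by the complex case of this very statement, hence $A$ has WEP by \cite{BR}.)

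\medskip

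\textbf{Main obstacle.} The routine part is the bookkeeping with complexification of CB spaces and biduals; the only genuinely delicate point is making sure the complex factorization descends to a \emph{real} one through a real $B(H)$ — i.e.\ correctly handling the passage from a complex Hilbert space $K$ to the underlying real Hilbert space $K_r$ (so that $B_{\bC}(K)$ sits inside $B_{\bR}(K_r)$) and the insertion of the real part projection $\rho_A^{\star\star}$ at the bidual level, checking it restricts correctly to the identity on $A$. This is the same trick used repeatedly in the excerpt (e.g.\ in the proof of Theorem \ref{res:RealInComplex} and of Proposition \ref{prop:DecIfuCP}), so it should go through without surprises; one just needs to be careful, as warned in the introduction, not to forgetfully assume complex linearity where only real linearity holds.
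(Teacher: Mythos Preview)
Your approach via complexification is correct and is exactly the route the paper takes (the paper's own proof is even terser, simply citing the complex case and the WEP equivalence $A \leftrightarrow A_c$ from \cite{BR}). One detail in your converse direction deserves tightening: the map $\rho_A^{\star\star} \circ T$ is only defined on $B_{\bC}(K)$, which is a \emph{proper} real subspace of $B_{\bR}(K_r)$, so as written you have factored through $B_{\bC}(K)$ rather than through a genuine real $B(H)$. The easy fix is to precompose with the ucp projection $Q: B_{\bR}(K_r) \to B_{\bC}(K)$ given by $Q(S)=\tfrac12(S+J^*SJ)$, where $J$ is multiplication by $i$ (a real unitary on $K_r$); then $A \xrightarrow{S|_A} B_{\bR}(K_r) \xrightarrow{\rho_A^{\star\star}\circ T\circ Q} A^{**}$ is the desired real factorization. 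Alternatively, you can sidestep this descent entirely by observing that real WEP \emph{directly} yields a cp factorization of $A\to A^{**}$ through $B(H)$ from the definition, so complexification is only needed for the implication ``real factorization $\Rightarrow$ complex factorization $\Rightarrow$ $A_c$ WEP $\Rightarrow$ $A$ WEP,'' which is the clean direction.
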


\begin{proof} These follow easily 
by complexification and the complex case of these statements, using the fact from 
\cite{BR} that $A$ has WEP if and only if $A_c$ does.
Indeed standard arguments used many times in \cite{BR} show that if these `completely boundedly conditions' hold for $A$ then they hold for $A^{**}$. 
The second assertion in the last theorem is the real case of \cite[Corollary 23.10]{P}. \end{proof}

\subsection{Dec characterization of QWEP}\label{sec:QWEP}
The Dec characterization of QWEP in \cite[Theorem 9.67]{P} works in the real case at least for $C^*$-algebras that are densely spanned by unitaries, such as all von Neumann algebras.  Indeed  (i) implies (ii) in \cite[Theorem 9.67]{P} in the real case, for all real $C^*$-algebras.  This  follows by complexification.  A direct proof of this needs 7.48 and 6.11 in \cite{P}, which were mostly checked in \cite{BR}.  As in the complex case
it is obvious that (ii) implies (ii)' there, and that if 
$D$ is densely spanned by unitaries, then these imply (iii)
there.  The converse was proved in \cite{BR}.

We can give a variant of the condition above to take care of $C^*$-algebras that are not densely spanned by unitaries:

\begin{corollary} \label{qweps} A real C$^*$-algebra $D$ is real QWEP  if and only if for all real LLP $C^*$-algebras $C_1$ and $C$ and real decomposable maps $u : C \to M_2(D)$ with $\| u \|_{\rm dec} \leq 1$, we have $I \otimes u$ contractive from $C_1 \minten C \to C_1 \maxten M_2(D)$. 
\end{corollary}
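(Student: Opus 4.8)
The plan is to deduce this from the complex case of the QWEP $\maxten$-characterization together with the real--complex translation for LLP, QWEP, and the min/max tensor norms already available in \cite{BR}. The key structural facts I would invoke are: (i) a real $C^*$-algebra $D$ is QWEP if and only if $D_c$ is QWEP; (ii) a real $C^*$-algebra $C$ has LLP if and only if $C_c$ has LLP; (iii) for real $C^*$-algebras $A,B$ one has $(A \minten B)_c = A_c \minten B_c$ and $(A \maxten B)_c = A_c \maxten B_c$ completely isometrically, in particular $A \minten B \subseteq A_c \minten B_c$ and $A \maxten B \subseteq A_c \maxten B_c$ completely isometrically; and (iv) $M_2(D)_c = M_2(D_c)$ as $C^*$-algebras, and by Theorem \ref{res:RealInComplex} a real linear $u : C \to M_2(D)$ is decomposable with $\|u\|_{\rm dec} \le 1$ precisely when $u_c : C_c \to M_2(D_c)$ is decomposable with $\|u_c\|_{\rm dec}\le 1$.

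For the forward direction, suppose $D$ is QWEP, so $D_c$ is QWEP. Given real LLP algebras $C_1, C$ and a real decomposable $u : C \to M_2(D)$ with $\|u\|_{\rm dec}\le 1$, pass to $u_c : C_c \to M_2(D_c)$, which is complex decomposable of dec-norm $\le 1$; since $C_{1,c}$ and $C_c$ are complex LLP, the complex case (the $\maxten$-version of \cite[Theorem 9.67]{P}, which is exactly the shape stated here) gives that $I \otimes u_c$ is contractive $C_{1,c}\minten C_c \to C_{1,c}\maxten M_2(D_c)$. Now restrict: $C_1 \minten C \subseteq C_{1,c}\minten C_c$ completely isometrically and $C_1\maxten M_2(D)\subseteq C_{1,c}\maxten M_2(D_c)$ completely isometrically, and under these inclusions $(I\otimes u)$ is the restriction of $(I\otimes u_c)$ (compute on elementary tensors $x\otimes y \mapsto x \otimes u(y)$). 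Hence $I\otimes u$ is contractive, as required.

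For the converse, assume the stated condition for $D$. I want to verify the complex hypothesis of \cite[Theorem 9.67]{P} for $D_c$, i.e.\ that for all complex LLP algebras $\mathcal C_1, \mathcal C$ and complex decomposable $v : \mathcal C \to M_2(D_c)$ with $\|v\|_{\rm dec}\le 1$, $I\otimes v$ is contractive $\mathcal C_1 \minten \mathcal C \to \mathcal C_1 \maxten M_2(D_c)$; then $D_c$ is QWEP and so $D$ is. The main obstacle is precisely this step: unlike the forward direction, an arbitrary complex LLP algebra $\mathcal C$ need not be a complexification of a real LLP algebra, so one cannot simply ``descend'' $v$. The resolution I would pursue mirrors the trick used in Corollary \ref{cop} and in the Remark after Proposition \ref{P232}: use the noncanonical completely positive (hence completely contractive) complex maps $\mathcal C \to \mathcal C_c \to \mathcal C$ (and similarly for $\mathcal C_1$), where $\mathcal C_c$ denotes the complexification of the \emph{underlying real} algebra of $\mathcal C$, so that $\mathcal C_c$ \emph{is} a complexification of a real LLP algebra (the underlying real algebra of $\mathcal C$ has LLP, since LLP passes to the real structure via \cite{BR} — this is the point to check carefully, that $\mathcal C$ LLP forces $\mathcal C_{\mathbb R}$ LLP). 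Given complex decomposable $v$, precompose/postcompose with these real-to-complex-to-real maps to realize $I\otimes v$ as a restriction/compression of $I\otimes u_c$ for a suitable real decomposable $u$ obtained from $v$ by the real part projection $\rho_{M_2(D)}$ (as in the proof of Theorem \ref{res:RealInComplex}), with $\|u\|_{\rm dec}\le 1$; then the hypothesis on $D$ (applied to these real LLP algebras) together with the min/max complexification identities from \cite{BR} forces the desired contractivity for $v$. I expect the bookkeeping of which algebra is LLP and tracking the dec-norm through $\rho_{M_2(D)}$ to be the delicate part, but each ingredient — real/complex LLP equivalence, real/complex QWEP equivalence, $(\cdot)_c$ commuting with $\minten,\maxten$, and Theorem \ref{res:RealInComplex} — is available, so the argument should go through.
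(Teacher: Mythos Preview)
Your forward direction is fine and essentially matches the paper's: you complexify to $M_2(D_c)$ and invoke the complex version of \cite[Theorem 9.67]{P}, while the paper stays real, observing that $M_2(D)$ is QWEP and applying the real ``(i)$\Rightarrow$(ii)'' direction of 9.67 established just before the Corollary. Both work.

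The converse is where the plans diverge, and yours has a genuine gap. You aim to verify the \emph{complex} 9.67 hypothesis for $D_c$, which forces you to handle an arbitrary complex LLP algebra $\mathcal C$. Your proposed fix---pass to the underlying real algebra $\mathcal C_{\mathbb R}$ and hope it is real LLP---is unproved as stated (you flag this yourself), and more seriously, the ``realize $I\otimes v$ as a restriction/compression of $I\otimes u_c$ for $u=\rho_{M_2(D)}\circ v$'' step is not fleshed out: since $u=\rho\circ v$ does not satisfy $u_c=v$, recovering the contractivity of $I\otimes v$ from that of $I\otimes u$ is not automatic, and the $\mathcal C\to\mathcal C_c\to\mathcal C$ trick from Corollary~\ref{cop} does not transplant cleanly because here the source algebras must be LLP and the map $u$ itself changes, not just the tensoring algebra.

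The paper sidesteps all of this with a much simpler idea: it does \emph{not} try to verify complex 9.67 for $D_c$. Instead it applies the \emph{real} version of 9.67 directly to the real $C^*$-algebra $D_c$, which is legitimate because $D_c$ (being a complex $C^*$-algebra) is densely spanned by its unitaries, the case in which the real 9.67 was shown to hold in full just before the Corollary. Concretely: given real LLP $C_1,C$ and real decomposable $u:C\to D_c$, view $u$ as landing in $M_2(D)$ via the embedding $c:D_c\hookrightarrow M_2(D)$; the hypothesis then gives $I\otimes u$ contractive into $C_1\maxten M_2(D)$; compose with $I\otimes\Phi$ for the completely positive projection $\Phi:M_2(D)\to D_c$ to get contractivity into $C_1\maxten D_c$. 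Hence $D_c$ is real QWEP, so $D$ is. The point you missed is that the target of the 9.67-type verification should be $D_c$, and the $M_2(D)$ in the hypothesis is there precisely to accommodate the embedding $D_c\subset M_2(D)$.
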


\begin{proof} If $D$ is real QWEP, with $A = B/I$ for WEP $B$, then $M_n(D) = M_n(B)/M_n(I)$ and $M_n(B)$ is WEP. 
So $M_2(D)$ is QWEP.  Hence the condition holds by the last proof.  

Conversely, suppose that the condition holds.    Now let  $u : C \to D_c$ be a complex  decomposable map,
viewed as a map into $M_2(D)$.   We have $I \otimes u$ contractive from $C_1 \minten C \to C_1 \maxten M_2(D)$.  Composing with
$I \otimes \Phi$, where $\Phi : M_2(D) \to D_c$ is the canonical projection, we have $I \otimes u$ contractive from $C_1 \minten C \to C_1 \maxten D_c$.
So $D_c$ has QWEP hence $D$ has QWEP.
\end{proof}

{\bf Remark.} Corollaries 9.69 and 9.70 in \cite{P} are valid in the real case, by complexification, as are 9.71--9.75.  We leave these to the reader.

\bigskip 

{\bf Acknowledgements.}   
  We acknowledge support from NSF Grant DMS-2154903.  We thank Christian Le Merdy for very helpful discussions and historical  comments, and for thinking about some difficult questions which we asked him. 
    We also thank Ping Zhong for discussions on an aspect of this.

\bigskip 

{\bf Data availability:}
No data was used for the research described in the article.

\end{document}